\def\rr{{\mathbb R}}
\def\rd{{{\rr}^d}}
\def\zz{{\mathbb Z}}
\def\cc{{\mathbb C}}
\def\bb{{\mathbb B}}
\def\nn{{\mathbb N}}
\def\cx{{\mathcal X}}
\def\fz{\infty}
\def\az{\alpha}
\def\bz{\beta}
\def\dz{\delta}
\def\bdz{\Delta}
\def\ez{\epsilon}
\def\gz{{\gamma}}
\def\kz{\kappa}
\def\lz{\lambda}
\def\oz{\omega}
\def\tz{\theta}
\def\sz{\sigma}
\def\vz{\varphi}
\def\lf{\left}
\def\r{\right}
\def\hs{\hspace{0.25cm}}
\def\ls{\lesssim}
\def\gs{\gtrsim}
\def\noz{\nonumber}
\def\wz{\widetilde}
\def\st{\subset}
\def\diam{\mathop\mathrm{diam}}
\def\supp{\mathop\mathrm{supp}}
\def\dint{\displaystyle\int}
\def\lp{{L^p(\mu)}}
\def\lq{{L^q(\mu)}}
\def\lpo{{L^{p_1}(\mu)}}
\def\lqo{{L^{q_1}(\mu)}}
\def\lon{{L^1(\mu)}}
\def\li{{L^{\infty}(\mu)}}
\def\rbmo{\mathop\mathrm{\,{\rm RBMO}(\mu)}}
\def\bph{L^{\Phi}(\mu)}
\def\bps{L^{\Psi}(\mu)}
\def\hon{{H^1(\mu)}}
\def\hp{H_{\rm{atb}}^{1,p}(\mu)}
\def\osc{{\rm Osc}_{\exp L^r}(\mu)}
\def\osi{{\rm Osc}_{\exp L^{r_i}}(\mu)}
\def\osj{{\rm Osc}_{\exp L^{r_j}}(\mu)}
\def\ep{\exp L^r,\,B,\,\mu/\mu(2B)}
\def\loc{{\mathop\mathrm{loc\,}}}
\newtheorem{theorem}{Theorem}[section]
\newtheorem{lemma}[theorem]{Lemma}
\newtheorem{corollary}[theorem]{Corollary}
\newtheorem{proposition}[theorem]{Proposition}
\theoremstyle{definition}
\newtheorem{remark}[theorem]{Remark}
\newtheorem{definition}[theorem]{Definition}
\numberwithin{equation}{section}
\begin{document}

\arraycolsep=1pt

\title{\Large\bf Generalized Fractional Integrals and Their Commutators over
Non-homogeneous Metric Measure Spaces \footnotetext {\hspace{-0.35cm}
2010 {\it Mathematics Subject Classification}. Primary 47B06; Secondary 47B47,
42B25, 42B35, 30L99. \endgraf {\it Key words and phrases}. non-homogeneous metric measure space,
fractional integral, commutator, Orlicz space, Hardy space,
${\mathop\mathrm{\,RBMO(\mu)}}$, ${\mathrm{Osc}_{\exp L^r}(\mu)}$.
\endgraf This project is supported by the National
Natural Science Foundation of China (Grant Nos. 11171027, 11361020 \& 11101038), 
the Specialized Research Fund for the Doctoral Program of Higher Education
of China (Grant No. 20120003110003) and the Fundamental Research Funds for Central
Universities of China (Grant No. 2012LYB26).}}
\author{Xing Fu, Dachun Yang\,\footnote{Corresponding author}\ \ and Wen Yuan}
\date{}
\maketitle

\vspace{-0.8cm}

\begin{center}
\begin{minipage}{13cm}
{\small {\bf Abstract}\quad Let $({\mathcal X},d,\mu)$ be a metric measure space
satisfying both the upper doubling and the geometrically doubling
conditions. In this paper, the authors establish some
equivalent characterizations for the boundedness of fractional integrals over
$({\mathcal X},d,\mu)$. The authors
also prove that multilinear commutators of fractional integrals
with ${\mathop\mathrm{\,RBMO(\mu)}}$ functions are bounded
on Orlicz spaces over $({\mathcal X},d,\mu)$, which include
Lebesgue spaces as special cases. The weak type
endpoint estimates for multilinear commutators of fractional integrals with
functions in the Orlicz-type space ${\mathrm{Osc}_{\exp L^r}(\mu)}$,
where $r\in [1,\infty)$, are
also presented. Finally, all these results are applied
to a specific example of fractional
integrals over non-homogeneous metric measure spaces.}
\end{minipage}
\end{center}


\section{Introduction}\label{s1}

\hskip\parindent During the past ten to fifteen years, considerable
attention has been paid to the study of the classical theory of
harmonic analysis on Euclidean spaces with non-doubling measures only satisfying
the polynomial growth condition (see, for example,
\cite{gm,gg,t01a,t01b,t03a,t03b,t04,t05,cs,ntv,hmy1,hmy2,hmy3,hmy4,cmy,yy12}).
Recall that a Radon measure $\mu$ on $\mathbb{R}^{d}$ is said to only satisfy
the polynomial growth condition, if there exists a positive constant $C$ such that,
for all $x\in\mathbb{R}^{d}$ and $r\in (0,\fz)$,
\begin{equation}\label{1.1}
\mu(B(x,r))\le Cr^{\kz},
\end{equation}
where $\kz$ is some fixed number in $(0,d]$ and $B(x,r):=\{y\in\rr^d:\,|y-x|<r\}$.
The analysis associated with such non-doubling measures $\mu$ as in
\eqref{1.1} has proved to play a striking role in solving
the long-standing open Painlev\'e's problem
and Vitushkin's conjecture by Tolsa \cite{t03b, t04, t05}.

Obviously, the non-doubling measure $\mu$ as in \eqref{1.1} may not satisfy
the well-known doubling condition, which is a key assumption
in harmonic analysis on spaces of homogeneous type in the sense of Coifman and Weiss
\cite{cw71, cw77}.
To unify both spaces of homogeneous type and
the metric spaces endowed with measures only satisfying the polynomial growth
condition, Hyt\"onen \cite{h10} introduced
a new class of metric measure spaces satisfying both the so-called
geometrically doubling and the upper doubling conditions (see also, respectively,
Definitions \ref{d1.1} and \ref{d1.3} below), which are called
\emph{non-homogeneous metric measure spaces}.
Recently, many classical results have been proved still valid
if the underlying spaces are replaced by the non-homogeneous metric measure spaces
(see, for example, \cite{h10,ly1,bd,hlyy,hm,hyy,lyy,fyy,ly}).
It is now also known that the theory of the singular integral operators
on non-homogeneous metric measure spaces arises naturally in the study of
complex and harmonic analysis questions in several complex variables
(see \cite{vw09,hm}). More progresses on the Hardy space $H^1$ and boundedness
of operators on non-homogeneous metric measure spaces can be found in the survey
\cite{yyf} and the monograph \cite{yyh}.

Let $(\cx,d,\mu)$ be a non-homogeneous metric measure space
in the sense of Hyt\"onen \cite{h10}. In this paper, we establish some
equivalent characterizations for the boundedness of fractional integrals over
$({\mathcal X},d,\mu)$. We also prove that multilinear commutators of
fractional integrals with ${\mathop\mathrm{\,RBMO(\mu)}}$ functions are bounded
on Orlicz spaces over $({\mathcal X},d,\mu)$, which include
Lebesgue spaces as special cases. The weak type
endpoint estimates for multilinear commutators of fractional integrals with
functions in the Orlicz-type space ${\mathrm{Osc}_{\exp L^r}(\mu)}$,
where $r\in [1,\fz)$, are also presented. Finally, all these results are applied
to a specific example of fractional integrals over non-homogeneous metric measure spaces.
The results of this paper round out the picture on fractional
integrals and their commutators over
non-homogeneous metric measure spaces.

Recall that the well-known Hardy-Littlewood-Sobolev theorem
(see, for example, \cite[pp. 119-120, Theorem 1]{s70}) states that
the classical fractional integral $I_\az$, with $\az\in (0,d)$,
is bounded from $L^p(\rd)$ into $L^q(\rd)$, for all $p\in (1,d/\az)$
and $1/q=1/p-\az/d$, and bounded from $L^1(\rd)$ to weak $L^{d/(d-\az)}(\rd)$.
Chanillo \cite{c} further showed that the commutator $[b,I_\az]$, generated by
$b\in {\mathop\mathrm{BMO}}(\rd)$ and $I_\az$,  which is defined by
$$[b,I_\az](f)(x):=b(x)I_\az(f)(x)-I_\az(bf)(x),\quad x\in\rd,$$
is bounded from $L^p(\rd)$ into $L^q(\rd)$ for all $\az\in (0,d)$,
$p\in (1,d/\az)$ and $1/q=1/p-\az/d$.
These results, when the $d$-dimensional Lebesgue measure
is replaced by the non-doubling measure $\mu$ as in \eqref{1.1},
were obtained by Garc\'ia-Cuerva and Martell \cite{gm}
and by Chen and Sawyer \cite{cs}, respectively.
Moreover, also in this setting with the non-doubling measure $\mu$ as in \eqref{1.1},
some equivalent characterizations for the boundedness of fractional integrals
were established in \cite{hmy4} and the boundedness for
the multilinear commutators of fractional integrals with $\rbmo$
or ${\mathrm{Osc}_{\exp L^r}(\mu)}$ functions was presented in \cite{hmy1}.

On the other hand, due to the request of applications,
as a natural extension of Lebesgue spaces, the Orlicz space was introduced by Birnbaum-Orlicz
in \cite{bo31} and Orlicz in \cite{o32}.
Since then, the theory of Orlicz spaces and its applications
have been well developed (see, for example, \cite{rr,rr02,m}).

To state the main results of this paper, we first recall some necessary
notions.

The following notion of the geometrically doubling
is well known in analysis on metric spaces, which was originally introduced
by Coifman and Weiss in \cite[pp.\,66-67]{cw71} and is also
known as \emph{metrically doubling} (see, for example, \cite[p.\,81]{he}).

\begin{definition}\label{d1.1}
A metric space $(\cx,d)$ is said to be \emph{geometrically doubling} if there
exists some $N_0\in \nn$ such that, for any ball
$B(x,r)\st \cx$, there exists a finite ball covering $\{B(x_i,r/2)\}_i$ of
$B(x,r)$ such that the cardinality of this covering is at most $N_0$.
\end{definition}

\begin{remark}\label{r1.2}
Let $(\cx,d)$ be a metric space. In \cite{h10}, Hyt\"onen showed that
the following statements are mutually equivalent:
\vspace{-0.25cm}
\begin{itemize}
  \item[\rm(i)] $(\cx,d)$ is geometrically doubling.
\vspace{-0.25cm}
  \item[\rm(ii)] For any $\ez\in (0,1)$ and any ball $B(x,r)\st \cx$,
there exists a finite ball covering $\{B(x_i,\ez r)\}_i$ of
$B(x,r)$ such that the cardinality of this covering is at most $N_0\ez^{-n}$,
here and in what follows, $N_0$ is as in Definition \ref{d1.1} and
$n:=\log_2N_0$.
\vspace{-0.25cm}
  \item[\rm(iii)] For every $\ez\in (0,1)$, any ball $B(x,r)\st \cx$ contains
at most $N_0\ez^{-n}$ centers of disjoint balls $\{B(x_i,\ez r)\}_i$.
\vspace{-0.25cm}
  \item[\rm(iv)] There exists $M\in \nn$ such that any ball $B(x,r)\st \cx$
  contains at most $M$ centers $\{x_i\}_i$ of disjoint balls $\{B(x_i, r/4)\}_{i=1}^M$.
  \end{itemize}
\end{remark}

Recall that spaces of homogeneous type are geometrically doubling, which was proved
by Coifman and Weiss in \cite[pp.\,66-68]{cw71}.

The following notion of upper doubling metric measure spaces was originally introduced
by Hyt\"onen \cite{h10} (see also \cite{hlyy,lyy}).

\begin{definition}\label{d1.3}
A metric measure space $(\cx,d,\mu)$ is said to be \emph{upper doubling} if
$\mu$ is a Borel measure on $\cx$ and there exist a \emph{dominating function}
$\lz:\cx \times (0,\fz)\to (0,\fz)$ and a positive constant $C_{\lz}$,
depending on $\lz$, such that, for each $x\in \cx$, $r\to \lz(x,r)$ is
non-decreasing and, for all $x\in \cx$ and $r\in (0,\fz)$,
\begin{equation}\label{1.2}
\mu(B(x,r))\le\lz(x,r)\le C_{\lz}\lz(x,r/2).
\end{equation}
A metric measure space $(\cx,d,\mu)$ is called a \emph{non-homogeneous metric measure space}
if $(\cx,d)$ is geometrically doubling and $(\cx,d,\mu)$ upper doubling.
\end{definition}

\begin{remark}\label{r1.4}
(i) Obviously, a space of homogeneous type is a
special case of upper doubling spaces, where we take the dominating function
$\lz(x,r):=\mu(B(x,r))$. On the other hand, the Euclidean space
$\rd$ with any Radon measure $\mu$ as in \eqref{1.1} is also an upper doubling
space by taking the dominating function $\lz(x,r):=C_0r^{\kz}$.

(ii) Let $(\cx,d,\mu)$ be upper doubling with $\lz$ being the dominating
function on $\cx \times (0,\fz)$ as in Definition \ref{d1.3}. It was proved
in \cite{hyy} that there exists another
dominating function $\wz{\lz}$ such that $\wz{\lz}\le \lz$, $C_{\wz{\lz}}\le C_{\lz}$
and, for all $x,\,y\in \cx$ with $d(x,y)\le r$,
\begin{equation}\label{1.3}
\wz{\lz}(x,r)\le C_{\wz{\lz}}\wz{\lz}(y,r).
\end{equation}

(iii) It was shown in \cite{tl} that the upper doubling condition
is equivalent
to the \emph{weak growth condition}:
there exist a dominating function $\lz:\cx\times(0,\fz)\to(0,\fz)$,
with $r\to\lz(x,r)$ non-decreasing, positive constants $C_\lz$, depending on $\lz$,
and $\ez$ such that
\begin{itemize}
\item[(a)] for all $r\in(0,\fz)$, $t\in[0,r]$, $x,\,y\in\cx$ and $d(x,y)\in[0,r]$,
$$|\lz(y,r+t)-\lz(x,r)|\le C_\lz\lf[\frac{d(x,y)+t}r\r]^{\ez}
\lz(x,r);$$

\item[(b)] for all $x\in\cx$ and $r\in(0,\fz)$,
$$\mu(B(x,r))\le\lz(x,r).$$
\end{itemize}
\end{remark}

Based on Remark \ref{r1.4}(ii), from now on, we \emph{always assume that
$(\cx,d,\mu)$ is a non-homogeneous metric measure space with the dominating function $\lz$
satisfying \eqref{1.3}}.

We now recall the notion of the coefficient $K_{B,S}$ introduced by Hyt\"onen \cite{h10}, which is analogous
to the quantity $K_{Q,R}$ introduced by Tolsa \cite{t01b,t03a}.
It is well known that $K_{B,S}$ well characterizes the geometrical properties of
balls $B$ and $S$.

\begin{definition}\label{d1.5}
For any two balls $B\st S$, define
$$K_{B,S}
:=1+\int_{2S\setminus B}\frac1{\lz (c_B,d(x,c_B))}\,d\mu(x),$$
where $c_B$ is the center of the ball $B$.
\end{definition}

\begin{remark}\label{r1.6}
The following discrete version, ${\wz K}_{B,S}$,
of $K_{B,S}$ defined in Definition \ref{d1.5},
was first introduced by Bui and Duong \cite{bd}
in non-homogeneous metric measure spaces, which is more close to the quantity $K_{Q,R}$
introduced by Tolsa \cite{t01a} in the setting of non-doubling measures.
For any two balls $B\st S$, let ${\wz K}_{B,S}$ be defined by
\begin{equation*}
{\wz K}_{B,S}
:=1+\sum_{k=1}^{N_{B,S}}\frac{\mu(6^{k}B)}{\lz(c_{B},6^{k}r_{B})},
\end{equation*}
where $r_B$ and $r_S$ respectively denote the \emph{radii} of the balls
$B$ and $S$, and $N_{B,S}$ the \emph{smallest integer} satisfying
$6^{N_{B,S}}r_{B}\ge r_{S}$. Obviously, $K_{B,S}\ls {\wz K}_{B,S}$.
As was pointed by Bui and Duong \cite{bd}, in general,
it is not true that $K_{B,S}\sim{\wz K}_{B,S}$.
\end{remark}

Though the measure doubling condition is not assumed uniformly for all balls
in the non-homogeneous metric measure space $(\cx,d,\mu)$, it was shown in \cite{h10} that there exist still many
balls which have the following $(\eta,\bz)$-doubling property.

\begin{definition}\label{d1.7}
Let $\eta,\,\bz\in (1,\fz)$. A ball $B\st \cx$ is said to be
\emph{$(\eta,\bz)$-doubling} if $\mu(\eta B)\le \bz\mu(B)$.
\end{definition}

To be precise, it was proved in \cite[Lemma 3.2]{h10} that, if a metric measure
space $(\cx,d,\mu)$ is upper doubling and $\eta,\,\bz\in(1,\fz)$ satisfying
$\bz>C_{\lz}^{\log_2\eta}=:\eta^\nu$,
then, for any ball $B\st \cx$, there exists some $j\in \zz_+:=\nn\cup \{0\}$ such
that $\eta^jB$ is $(\eta,\bz)$-doubling. Moreover, let $(\cx,d)$ be geometrically
doubling, $\bz>\eta^n$ with $n:=\log_2N_0$ and $\mu$ a Borel measure on $\cx$
which is finite on bounded sets. Hyt\"onen \cite[Lemma 3.3]{h10} also showed that, for
$\mu$-almost every $x\in \cx$, there exist arbitrary small $(\eta,\bz)$-doubling
balls centered at $x$. Furthermore, the radii of these balls may be chosen to be
the form $\eta^{-j}B$ for $j\in \nn$ and any preassigned number $r\in (0,\fz)$.
Throughout this paper, for any $\eta\in (1,\fz)$ and ball $B$, the \emph{smallest
$(\eta,\bz_\eta)$-doubling ball of the form $\eta^j B$ with $j\in \nn$} is denoted by
$\wz B^\eta$, where
\begin{equation}\label{1.4}
\bz_\eta:=\max\{\eta^{3n},\,\eta^{3\nu}\}+30^n+30^\nu
=\eta^{3(\max\{n,\nu\})}+30^n+30^\nu.
\end{equation}
In what follows, by a \emph{doubling ball} we mean a
$(6,\bz_6)$-doubling ball and $\wz B^6$ is \emph{simply denoted} by $\wz B$.

Now we recall the following notion of $\rbmo$ from \cite{h10}.

\begin{definition}\label{d1.8}
Let $\rho\in (1,\fz)$. A function $f\in
L_{\rm{loc}}^{1}(\mu)$ is said to be in the \emph{space $\rbmo$}
if there exist a positive constant $C$ and, for any
ball $B\subset \cx$, a number $f_B$ such that
\begin{equation}\label{1.5}
\frac{1}{\mu(\rho B)}\int_{B}\lf|f(x)-f_B\r|\,d\mu(x)\le C
\end{equation}
and, for any two balls $B$ and $B_1$ such that $B\st B_1$,
\begin{equation}\label{1.6}
|f_B-f_{B_1}|\le CK_{B,B_1}.
\end{equation}
The infimum of the positive constants $C$ satisfying both \eqref{1.5} and \eqref{1.6}
is defined to be the \emph{$\rm{RBMO}(\mu)$ norm} of $f$ and denoted by $\|f\|_{\rbmo}$.
\end{definition}

From \cite[Lemma 4.6]{h10}, it follows that the space $\rbmo$
is independent of the choice of $\rho\in (1,\fz)$.

In this paper, we consider a variant of the generalized fractional integrals from
\cite[Definition 4.1]{gg} (see also \cite[(1.4)]{hmy4}).

\begin{definition}\label{d1.9}
Let $\az\in (0,1)$.
A function $K_{\az}\in L_{\rm{loc}}^{1}(\cx\times
\cx\setminus\{(x,y):x=y\})$ is called a \emph{generalized fractional integral
kernel} if there exists a positive constant $C_{K_\az}$, depending on $K_\az$,
such that

(i) for all $x,\,y\in\cx$ with $x\ne y$,
\begin{equation}\label{1.7}
|K_{\az}(x,y)|\le C_{K_\az}\frac{1}{[\lz(x,d(x,y))]^{1-\az}};
\end{equation}

(ii) there exist positive constants $\dz\in (0,1]$ and $c_{K_\az}\in(0,\fz)$ such that,
for all $x,\,\wz x,\,y\in\cx$ with $d(x,y)\ge c_{K_\az}d(x,\wz{x})$,
\begin{equation}\label{1.8}
|K_{\az}(x,y)-K_{\az}(\wz x,y)|+|K_{\az}(y,x)-K_{\az}(y,\wz x)|\le
C_{K_\az}\frac{[d(x,\wz x)]^{\dz}}{[d(x,y)]^{\dz}[\lz(x,d(x,y))]^{1-\az}}.
\end{equation}

Let $L^{\fz}_b(\mu)$ be the \emph{space of all $\li$ functions with bounded support}.
A linear operator $T_{\az}$ is called a \emph{generalized fractional integral} with
kernel $K_{\az}$ satisfying \eqref{1.7} and
\eqref{1.8} if, for all $f\in L^{\fz}_b(\mu)$ and $x\not\in \supp f$,
\begin{equation}\label{1.9}
T_{\az}f(x):=\int_{\cx}K_{\az}(x,y)f(y)\,d\mu(y).
\end{equation}
\end{definition}

\begin{remark}\label{r1.10}
(i) Without loss of generality, for the simplicity, we may assume in \eqref{1.8} that
$c_{K_\az}\equiv 2$.

(ii) If a kernel $K_\az$ satisfies \eqref{1.7} and \eqref{1.8}
with $\az=0$, then $K_\az$ is called a \emph{standard kernel} and the associated
operator $T_\az$ as in \eqref{1.9} is called a \emph{Calder\'on-Zygmund operator}
on non-homogeneous metric measure spaces (see \cite[Subsetion 2.3]{hm}).

(iii) We give a specific example of the generalized fractional integrals,
which is a natural variant of the so-called ``Bergman-type" operators from
\cite[Section 2.1]{vw09} (see also \cite[Section 12]{hm} and \cite[Section 2.2]{tc}).
Let $\cx:=\bb_{2d}$ be the open unit ball in $\cc^d$. Suppose that the measure $\mu$
satisfies the upper power bound $\mu(B(x,r))\le r^m$ with $m\in(0,2d]$ except the
case when $B(x,r)\st \bb_{2d}$. However, in the exceptional case
it holds true that $r\le {\wz d}(x):=d(x,\cc^d\setminus\bb_{2d})$, where
$d(x,y):=||x|-|y||+\lf|1-\bar{x}\cdot y/|x||y|\r|$ for all
$x,\,y\in\overline{\bb}_{2d}\st\cc^d$, and hence
$\mu(B(x,r))\le\max\{[{\wz d}(x)]^m,r^m\}=:\lz(x,r)$.
By similar arguments to those used in the proofs of
\cite[Proposition 2.13]{tc} and \cite[Section 2]{hm},
we conclude that, if $\az\in(0,1)$, then the kernel
$K_{m,\az}(x,y):=(1-\bar{x}\cdot y)^{-m(1-\az)}$, $x,\,y\in\overline{\bb}_{2d}\st\cc^d$,
satisfies the conditions \eqref{1.7} and \eqref{1.8}. So, when $\az\in(0,1)$, the fractional
integral $T_{m,\az}$, associated with $K_{m,\az}$, is an example of
the generalized fractional integrals as in Definition \ref{d1.9}.
Recall that, when $\az=0$, the operator $T_{m,0}$,
associated with $K_{m,0}$,  is just the so-called ``Bergman-type" operator
(see \cite{tc,vw09,hm}).
\end{remark}

Now we recall the notion of the atomic Hardy space from \cite{hyy}.

\begin{definition}\label{d1.11}
Let $\rho\in (1,\fz)$ and $p\in (1,\fz]$. A function
$b\in L_{\rm{loc}}^{1}(\mu)$ is called a \emph{$(p,1)_\lz$-atomic block} if

(i) there exists a ball $B$ such that $\supp b\st B$;

(ii) $\int_\cx b(x)\,d\mu(x)=0$;

(iii) for any $j\in\{1,\,2\}$, there exist a function $a_{j}$ supported on ball
$B_{j}\st B$ and a number $\lz_j\in\cc$ such that
$b=\lz_1a_1+\lz_2a_2$
and
$\|a_j\|_\lp\le [\mu(\rho B_j)]^{1/p-1}K_{B_j,B}^{-1}.$
Moreover, let $|b|_{\hp}:=|\lz_1|+|\lz_2|$.

A function $f\in L^1(\mu)$ is said to belong to the \emph{atomic Hardy space} $\hp$
if there exist $(p,1)_\lz$-atomic blocks
$\{b_{i}\}_{i=1}^\fz$ such that
$f=\sum_{i=1}^{\fz} b_{i}$ in $L^1(\mu)$
and $\sum_{i=1}^{\fz}|b_{i}|_{\hp}<\fz$.
The \emph{$\hp$ norm} of $f$ is defined by
$\|f\|_{\hp}:=\inf
\{\sum_{i=1}^{\fz}|b_{i}|_{\hp}\},$
where the infimum is taken over all the possible decompositions of
$f$ as above.
\end{definition}

\begin{remark}\label{r1.12}
(i) It was proved in \cite{hyy} that, for each $p\in (1,\fz]$, the atomic
Hardy space $\hp$ is independent of the choice of $\rho$ and that, for all
$p\in (1,\fz]$, the spaces $\hp$ and $H_{\rm{atb}}^{1,\fz}(\mu)$ coincide
with equivalent norms.
Thus, in what follows, we \emph{denote $\hp$ simply by $\hon$} and, unless
explicitly pointed out, we \emph{always assume} that $\rho=2$ in Definition
\ref{d1.11}.

(ii) It was proved in \cite[Remark 1.3(ii)]{lyy} that
the atomic Hardy space introduced by Bui and Duong \cite{bd}
and the atomic Hardy space in Definition \ref{d1.11} coincide
with equivalent norms.
\end{remark}

Then we state the first main theorem of this paper.

\begin{theorem}\label{t1.13}
Let $\az\in (0,1)$ and $T_\az$ be as in \eqref{1.9} with kernel $K_\az$
satisfying \eqref{1.7} and \eqref{1.8}. Then the following statements
are equivalent:

{\rm(I)} $T_\az$ is bounded from $\lp$ into $\lq$ for all $p\in(1,\,1/\az)$ and
$1/q=1/p-\az$;

{\rm(II)} $T_\az$ is bounded from $\lon$ into $L^{1/(1-\az),\fz}(\mu)$;

{\rm(III)} There exists a positive constant $C$ such that, for all
$f\in L^{1/\az}(\mu)$ with $T_\az f$ being finite almost everywhere,
$\|T_\az f\|_{\rbmo}\le C\|f\|_{L^{1/\az}(\mu)}$;

{\rm(IV)} $T_\az$ is bounded from $\hon$ into $L^{1/(1-\az)}(\mu)$;

{\rm(V)} $T_\az$ is bounded from $\hon$ into $L^{1/(1-\az),\fz}(\mu)$.
\end{theorem}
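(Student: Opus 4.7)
The plan is to close a cyclic chain of implications among (I)--(V). Two links are immediate: (IV)$\Rightarrow$(V) is the trivial inclusion $L^{1/(1-\az)}(\mu)\hookrightarrow L^{1/(1-\az),\fz}(\mu)$, and (II)$\Rightarrow$(V) follows from the continuous embedding $\hon\hookrightarrow\lon$, which is easily verified against the $(p,1)_\lz$-atomic block definition. The substantive work then reduces to two ``outgoing'' implications from (I), a duality step from (III) to (IV), and a ``return'' route from (V) back to (I).

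For (I)$\Rightarrow$(II), I would apply the Calder\'on--Zygmund decomposition for $\lon$-functions on non-homogeneous metric measure spaces due to Bui--Duong \cite{bd}: at level $t>0$, write $f=g+\sum_i b_i$ with $g$ essentially bounded by a constant times $t$ and each $b_i$ supported in a doubling ball $B_i$, of vanishing mean, and satisfying $\sum_i\mu(\rho B_i)\ls\|f\|_\lon/t$. Handle $g$ via (I) for a single $p\in(1,1/\az)$ and Chebyshev; for each $b_i$, split $T_\az b_i$ into its piece on $\rho B_i$, where the $L^p$--$L^q$ bound applies, and its tail on $\cx\setminus\rho B_i$, where the vanishing mean combined with \eqref{1.8} and \eqref{1.2} provides control. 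For (I)$\Rightarrow$(III), fix $f\in L^{1/\az}(\mu)$ and a ball $B\st\cx$, take $f_B:=T_\az(f\chi_{\cx\setminus(6/5)B})(c_B)$, and split $f=f\chi_{(6/5)B}+f\chi_{\cx\setminus(6/5)B}$. The near piece is handled by H\"older and (I) with any $p\in(1,1/\az)$; the far piece by \eqref{1.8} summed over the shells $6^kB\setminus 6^{k-1}B$, using \eqref{1.2} to control the dominating function. The comparison bound \eqref{1.6} for $B\st B_1$ follows from comparing $f_B$ with $f_{B_1}$ through an intermediate annulus, where \eqref{1.7} reproduces exactly the integral defining $K_{B,B_1}$.

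For (III)$\Rightarrow$(IV), exploit that the conditions \eqref{1.7}--\eqref{1.8}, combined with \eqref{1.3}, are invariant under swapping the two arguments of $K_\az$, so that the very same arguments ensure that (III) also holds for the adjoint $T_\az^*$. For $f\in\hon$, write
\[
\|T_\az f\|_{L^{1/(1-\az)}(\mu)}=\sup_{\|g\|_{L^{1/\az}(\mu)}\le 1}\lf|\int_\cx f\,T_\az^*g\,d\mu\r|,
\]
and apply the $\hon$--$\rbmo$ duality from \cite{hyy} to pair an atomic block decomposition of $f$ with $T_\az^*g\in\rbmo$. To close the cycle via (V)$\Rightarrow$(I), Calder\'on--Zygmund decompose a given $\lon$-function and rearrange its bad part as a combination of genuine $(p,1)_\lz$-atomic blocks in the sense of Definition \ref{d1.11}, whose total $\hon$-norm is bounded by $\|f\|_\lon/t$; apply (V) there and estimate the good part directly from \eqref{1.7} and \eqref{1.2} to obtain (II). Finally, interpolation between (II) and (III), in the spirit of Tolsa's $L^1$--$\rbmo$ Marcinkiewicz-type interpolation adapted to the present upper-doubling setting, recovers the full range of strong bounds in (I).

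The principal obstacle I anticipate is this return step. The raw Calder\'on--Zygmund bad pieces do not automatically satisfy the $K_{B_j,B}^{-1}$ normalization of clause (iii) in Definition \ref{d1.11}; producing this factor while maintaining control of the total $\hon$-norm requires delicately exploiting the doubling properties of the stopping balls together with the discrete coefficient $\wz K_{B,S}$ of Remark \ref{r1.6}, and transferring the $\rbmo$-endpoint Marcinkiewicz interpolation from the non-doubling Euclidean context to the present $(\cx,d,\mu)$ framework---where the quantities $\mu(B(x,r))$ and $\lz(x,r)$ play distinct roles---adds a further layer of technical complication.
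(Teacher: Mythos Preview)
Your duality argument for (III)$\Rightarrow$(IV) has a genuine gap. You claim that since the kernel conditions \eqref{1.7}--\eqref{1.8} are symmetric in the two variables, ``the very same arguments ensure that (III) also holds for the adjoint $T_\az^*$.'' But (III) is a boundedness hypothesis, not a consequence of the kernel estimates; the symmetry of the kernel only tells you that $T_\az^*$ has the same \emph{size and smoothness} conditions, not that it inherits any a priori mapping property from $T_\az$. In the implication (III)$\Rightarrow$(IV) you are given only that $T_\az:L^{1/\az}(\mu)\to\rbmo$, and there is no mechanism by which this transfers to $T_\az^*$ without already knowing some other boundedness (such as (I)), which would be circular. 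The paper avoids duality entirely here: it proves directly a local $L^{q_0}$ estimate on balls, namely $\int_B|T_\az a|^{q_0}\,d\mu\ls[\mu(2B)]^{q_0}\|a\|_\infty^{q_0}$, via a geometric construction of an auxiliary doubling ball $B_0$ at distance $\gtrsim r_B$ from $B$ (using the condition $r_B\le\mathrm{diam}(\supp\mu)/40$ and a case split), and then tests against atomic blocks.

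Your route (V)$\Rightarrow$(I) also breaks down. You propose to decompose $f=g+\sum_i b_i$ and ``estimate the good part directly from \eqref{1.7} and \eqref{1.2}''; but \eqref{1.7} and \eqref{1.2} by themselves give no bound on $T_\az g$ --- you need some operator boundedness, and (V) applies only to $\hon$-functions, which $g$ is not. Moreover, your interpolation is between the wrong endpoints: an $L^1\to L^{1/(1-\az),\fz}$ bound and an $L^{1/\az}\to\rbmo$ bound do not interpolate by Marcinkiewicz in any standard way. The paper's approach is to derive from (V) a local averaged estimate (their \eqref{2.35}), use that to establish (III) from (V), and then invoke an interpolation theorem (their Theorem \ref{t2.7}) whose hypotheses are precisely $T:L^{p_0}(\mu)\to\rbmo$ and $T:\hon\to L^{p_0',\fz}(\mu)$, i.e.\ exactly (III) and (V) with $p_0=1/\az$. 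This interpolation theorem itself requires the Calder\'on--Zygmund decomposition with the $\hon$-control of the bad part (Lemma \ref{l2.6}(iii)), which is the technical point you correctly flagged as delicate.
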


\begin{remark}\label{r1.14}
Theorem \ref{t1.13} covers \cite[Theorem 1.1]{hmy4} by
taking $\cx:=\rr^d$, $d$ being the usual Euclidean metric and $\mu$
as in \eqref{1.1}. The difference between Theorem \ref{t1.13}
and \cite[Theorem 1.1]{hmy4} exists in that no conclusion
of Theorem \ref{t1.13} is known to be true, while all conclusions
of \cite[Theorem 1.1]{hmy4} are true.
\end{remark}

Let $\Phi$ be a \emph{convex
Orlicz function} on $[0,\fz)$, namely, a convex
increasing function satisfying $\Phi(0)=0$, $\Phi(t)>0$ for
all $t\in (0,\fz)$ and $\Phi(t)\to \fz$ as $t\to \fz$.
Let
\begin{equation}\label{1.10}
a_{\Phi}:=\inf_{t\in (0,\fz)}\frac{t\Phi'(t)}{\Phi(t)}\quad \mathrm{and}\quad
b_{\Phi}:=\sup_{t\in (0,\fz)}\frac{t\Phi'(t)} {\Phi(t)}.
\end{equation}
We refer to \cite{m} for more properties of $a_{\Phi}$ and $b_{\Phi}$.

The
\emph{Orlicz space} $\bph$ is defined to be the space of all measurable
functions $f$ on $(\cx,d,\mu)$ such that
$\int_{\cx}\Phi(|f(x)|)\,d\mu(x)<\fz$; moreover, for any $f\in\bph$,
its \emph{Luxemburg norm} in $\bph$ is defined by
\begin{equation*}
\|f\|_{\bph}:=\inf\lf\{t\in(0,\fz):\ \int_{\cx}\Phi(|f(x)|/t)\,d\mu(x)\le 1\r\}.
\end{equation*}

For any sequence $\vec{b}:=(b_{1},\ldots,b_{k})$ of functions, the \emph{multilinear
commutator} $T_{\az,\,\vec{b}}$ of the generalized fractional integral $T_\az$ with
$\vec{b}$ is defined by setting, for all suitable functions $f$,
\begin{equation}\label{1.11}
T_{\az,\,\vec{b}}f:=[b_{k},\cdots,[b_{1},T_\az]\cdots]f,
\end{equation}
where
\begin{equation}\label{1.12}
[b_{1},T_\az]f:=b_{1}T_\az f-T_\az(b_{1}f).
\end{equation}

The second main result of this paper is the following boundedness
of the multilinear commutator $T_{\az,\vec{b}}$ on Orlicz spaces.

\begin{theorem}\label{t1.15}
Let $\az\in (0,1)$, $k\in\mathbb{N}$ and $b_{j}\in
\rbmo$ for all $j\in\{1,\ldots,k\}$. Let $\Phi$ be a convex Orlicz function
and $\Psi$ defined, via its inverse, by setting, for all $t\in (0,\fz)$,
$\Psi^{-1}(t):=\Phi^{-1}(t)t^{-\az}$,
where $\Phi^{-1}(t):=\inf\{s\in(0,\fz):\ \Phi(s)>t\}$.
Suppose that $T_{\az}$ is as in \eqref{1.9}, with kernel $K_\az$
satisfying \eqref{1.7} and \eqref{1.8}, which is bounded from $\lp$
into $\lq$ for all $p\in (1,1/\az)$ and $1/q=1/p-\az$.
If $1<a_{\Phi}\le b_{\Phi}<\fz$ and $1<a_{\Psi}\le b_{\Psi}<\fz$, then the multilinear
commutator $T_{\az,\vec{b}}$ as in \eqref{1.11} is bounded from $\bph$ to
$\bps$, namely, there exists a positive constant $C$
such that, for all $f\in \bph$,
$$\|T_{\az,\vec{b}}f\|_{\bps}
\le C\prod_{j=1}^k\|b_j\|_{\rbmo}\|f\|_{\bph}.$$
\end{theorem}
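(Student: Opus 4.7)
The plan is to follow the Hu--Meng--Yang strategy adapted to non-homogeneous metric measure spaces: reduce the $\bph\to\bps$ boundedness of $T_{\az,\vec{b}}$ to a pointwise sharp maximal estimate, combine this with an Orlicz analogue of the Fefferman--Stein inequality on $(\cx,d,\mu)$, and conclude by induction on $k$ using the $\bph\to\bps$ boundedness of a suitable Orlicz fractional maximal operator. The base case $k=0$, namely $T_\az:\bph\to\bps$, is obtained from the hypothesised $\lp\to\lq$ bounds via an Orlicz interpolation argument, where the assumptions $1<a_\Phi\le b_\Phi<\fz$, $1<a_\Psi\le b_\Psi<\fz$ and the defining identity $\Psi^{-1}(t)=\Phi^{-1}(t)t^{-\az}$ ensure that the relevant interpolation indices fall into the admissible range $(1,1/\az)$.

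Following a Tolsa-type framework, I would introduce a sharp maximal operator $M^{\sharp}$ compatible with $\rbmo$ (medians on non-doubling balls together with oscillation on doubling balls, calibrated by $K_{B,S}$). The central step is to prove the pointwise inequality
\begin{equation*}
M^{\sharp}(T_{\az,\vec{b}}f)(x)\ls \prod_{j=1}^{k}\|b_j\|_{\rbmo}M_{\Phi,\az}(f)(x)+\sum_{\sigma}\prod_{j\notin\sigma}\|b_j\|_{\rbmo}M_{r}(T_{\az,\vec{b}_\sigma}f)(x),
\end{equation*}
where $\sigma$ ranges over the proper non-empty subsets of $\{1,\ldots,k\}$, $\vec{b}_\sigma$ is the corresponding subfamily, $r\in(1,\fz)$ is chosen close to $1$, $M_r$ is the $L^r$-maximal operator and $M_{\Phi,\az}$ is the Orlicz fractional maximal operator built from $\Phi$. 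To establish this, I would fix a ball $B\ni x$, split $f=f\chi_{\frac{6}{5}B}+f\chi_{\cx\setminus\frac{6}{5}B}$, expand
\begin{equation*}
T_{\az,\vec{b}}f=\sum_{\sigma\st\{1,\ldots,k\}}(-1)^{k-|\sigma|}\prod_{j\in\sigma}\lf[b_j-(b_j)_{\wz B}\r]\,T_\az\lf(\prod_{j\notin\sigma}\lf[b_j-(b_j)_{\wz B}\r]f\r),
\end{equation*}
control the local piece by the $\lp\to\lq$ bounds of $T_\az$ combined with the generalized H\"older inequality in Orlicz spaces, and handle the distant piece using the regularity \eqref{1.8}, the $\rbmo$ bound \eqref{1.6}, and the John--Nirenberg inequality for $\rbmo$ on $(\cx,d,\mu)$.

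With this pointwise estimate, the inductive step closes as follows. Assuming the theorem for all commutators of order strictly less than $k$, I apply an Orlicz Fefferman--Stein inequality $\|g\|_{\bps}\ls\|M^{\sharp}g\|_{\bps}$ (valid under $1<a_\Psi\le b_\Psi<\fz$ once one verifies the a priori finiteness of a suitable doubling maximal function $N(T_{\az,\vec{b}}f)$ in $\bps$, which is justified by a truncation/approximation argument exploiting $b_j\in L^\fz_{\loc}(\mu)$ and the $\lp\to\lq$ bounds of $T_\az$). The term $\|M_{\Phi,\az}f\|_{\bps}$ is dominated by $\|f\|_{\bph}$ through the Orlicz boundedness $M_{\Phi,\az}:\bph\to\bps$ (a Hedberg-type computation using $a_\Psi>1$), while each summand $\|M_r(T_{\az,\vec{b}_\sigma}f)\|_{\bps}$ is controlled by the inductive hypothesis combined with the boundedness of $M_r$ on $\bps$, which is guaranteed as soon as $r$ is picked with $1<r<a_\Psi$.

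The principal obstacle is the pointwise sharp maximal estimate above. Already in the Euclidean non-doubling setting of \cite{hmy1}, this step demands a delicate tracking of the $K_{B,S}$-coefficients that arise both from \eqref{1.6} and from iterated applications of the regularity \eqref{1.8}; in the present metric measure space setting one must additionally replace $K_{B,S}$ by its discrete analogue $\wz K_{B,S}$ from Remark \ref{r1.6} wherever John--Nirenberg-type iterations appear, since $K_{B,S}\not\sim\wz K_{B,S}$ in general, while verifying that the passages between balls of the form $B$, $\wz B$, $6B$ and their doubling relatives do not dissipate the fractional gain needed to match the factor $t^{-\az}$ in the definition of $\Psi$. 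A secondary but non-trivial point is adapting the Fefferman--Stein inequality itself to $\bps$ on $(\cx,d,\mu)$, which relies on the comparability \eqref{1.3} and on a Calder\'on--Zygmund-type decomposition tailored to the upper doubling structure.
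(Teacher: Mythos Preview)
Your induction-via-sharp-maximal plan is in the right spirit, but the paper takes a markedly simpler route and your proposal has one genuine technical gap.

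\textbf{The simpler reduction you missed.} The paper does \emph{not} work in Orlicz spaces during the induction. Instead, by the Orlicz interpolation result Lemma~\ref{l2.4}, it suffices to show that $T_{\az,\vec{b}}$ is bounded from $\lp$ into $\lq$ for all $p\in(1,1/\az)$ and $1/q=1/p-\az$. The entire inductive argument then lives in Lebesgue spaces, where a Fefferman--Stein inequality (Lemma~\ref{l3.6}) and the boundedness of $M_{r,6}$ and $M_{r,5}^{(\az)}$ are already available. Your approach of carrying the Orlicz norm through the induction forces you to develop an Orlicz Fefferman--Stein inequality and the $\bph\to\bps$ boundedness of an Orlicz fractional maximal operator on $(\cx,d,\mu)$, neither of which the paper needs. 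Moreover, your insertion of $M_{\Phi,\az}$ into the \emph{pointwise} sharp maximal estimate is unnatural: a pointwise inequality should not see the Orlicz function. The paper's version of the estimate (equation \eqref{3.11}) uses $M_{r,6}(T_\az f)$ and the $L^r$-fractional maximal function $M_{r,5}^{(\az)}f$, and the Orlicz boundedness is obtained only afterward, by interpolation.

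\textbf{The missing ingredient.} You correctly flag the $K_{B,S}$-bookkeeping as the principal obstacle, but you do not identify the device that actually resolves it. With the standard sharp maximal operator $M^\sharp$, the oscillation term $|m_Q(T_{\az,\vec b}f)-m_R(T_{\az,\vec b}f)|/K_{Q,R}$ is \emph{not} dominated pointwise by maximal functions of $f$: the estimate \eqref{3.13} produces a factor $[\wz K_{Q,R}]^k\,\wz K_{Q,R}^{(\az)}$, and dividing by $K_{Q,R}$ alone leaves an unbounded $\wz K_{Q,R}^{(\az)}$. The paper's remedy is to introduce the fractional coefficient $\wz K_{B,S}^{(\az)}$ (Definition~\ref{d3.3}) and the associated sharp maximal operator $\wz M^{\#,\az}$ (Definition~\ref{d3.5}), prove a matching Fefferman--Stein inequality (Lemma~\ref{l3.6}), and then use a bootstrap lemma (Lemma~\ref{l3.8}) to absorb the remaining $[\wz K_{Q,R}]^k$ powers. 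Without this fractional variant of the sharp maximal operator, your pointwise step as stated would fail.
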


\begin{remark}\label{r1.16}
(i) Let all the notation be the same as in Theorem \ref{t1.15}.
By Theorem \ref{t1.13}, we can, in Theorem \ref{t1.15}, replace the assumption that
$T_{\az}$ is bounded from $\lp$ into $\lq$ for all $p\in (1,1/\az)$ and $1/q=1/p-\az$
by any one of the statements (II)-(V) in Theorem \ref{t1.13}.

(ii) In Theorem \ref{t1.15}, if $p\in (1,1/\az)$ and $\Phi(t):=t^{p}$ for all
$t\in (0,\fz)$, then $\Psi(t)=t^{q}$ and $1/q=1/p-\az$. In this case,
$a_{\Phi}=b_{\Phi}=p\in (1,\fz)$, $a_{\Psi}=b_{\Psi}=q\in (1,\fz)$,
$L^{\Phi}(\mu)=L^p(\mu)$ and $L^{\Psi}(\mu)=L^q(\mu)$. Thus,
Theorem \ref{t1.15}, even when $\cx:=\rr^d$, $d$ being the usual Euclidean metric and $\mu$
as in \eqref{1.1}, also contains \cite[Theorem 1.1]{hmy1} as a special case.
In the non-homogenous setting, Theorem \ref{t1.15}, even when $k=1$, is also new.
\end{remark}

The end point counterpart of Theorem \ref{t1.15} is also considered in this paper.
To this end, we first recall the following Orlicz type space $\osc$ of functions
(see, for example, P\'erez and Trujillo-Gonz\'alez \cite{pt} for
Euclidean spaces and \cite{hmy1} for non-doubling measures).

In what follows, let $L^1_\loc(\mu)$ be the \emph{space
of all locally $\mu$-integrable functions on $\cx$}.
For all balls $B$ and $f\in L^1_\loc(\mu)$, $m_B(f)$
denotes the \emph{mean value of $f$ on ball $B$}, namely,
\begin{equation}\label{1.13}
m_B(f):=\frac{1}{\mu(B)}\int_B f(y)\,d\mu(y).
\end{equation}

\begin{definition}\label{d1.17}
Let $r\in [1,\fz)$. A function $f\in L^1_\loc(\mu)$ is said to belong to the \emph{space}
$\osc$ if there exists a positive constant $C_1$ such that

(i) for all balls $B$,
\begin{eqnarray*}
&&\|f-m_{\wz B}(f)\|_{\ep}\\
&&\hs:=\inf\lf\{\lz\in(0,\fz):\ \frac{1}{\mu(2B)}
\int_B\exp \lf(\frac{|f-m_{\wz B}(f)|}{\lz}\r)^r\,d\mu\le 2\r\}\le C_1;
\end{eqnarray*}

(ii) for all doubling balls $Q\st R$,
$|m_Q(f)-m_R(f)|\le C_1K_{Q,R}.$

The \emph{$\osc$ norm} of $f$, $\|f\|_{\osc}$, is then defined to be
the infimum of all positive constants $C_1$ satisfying (i) and (ii).
\end{definition}

\begin{remark}\label{r1.18}
Obviously, for any $r\in [1,\fz)$, $\osc\st \rbmo$. Moreover, from
\cite[Corollary 6.3]{h10}, it follows that $\rm{Osc}_{\exp \emph{L}^1}(\mu)=\rbmo$.
\end{remark}

We recall some notation from \cite{hmy2}.
For $i\in\{1,\ldots, k\}$, the \emph{family of all finite subsets
$\sz:=\{\sz(1),\ldots,\sz(i)\}$ of $\{1,\ldots,k\}$ with
$i$ different elements} is denoted by $C_{i}^{k}$. For any $\sz\in C_{i}^{k}$, the
\emph{complementary sequence $\sz'$} is defined by
$\sz':=\{1,\ldots,k\}\setminus \sz$. For any
$\sz:=\{\sz(1),\ldots,\sz(i)\}\in C_{i}^{k}$ and
$k$-tuple $r:=(r_{1},\ldots,r_{k})$, we write
that $1/r_{\sz}:=1/r_{\sz(1)}+\cdots+1/r_{\sz(i)}$ and
$1/r_{\sz'}:=1/r-1/r_{\sz}$, where
$1/r:=1/r_{1}+\cdots+1/r_{k}$.

Now we state the third main result of this paper.
\begin{theorem}\label{t1.19}
Let $\az\in(0,1)$, $k\in \nn$, $r_i\in[1,\fz)$ and $b_i\in \osi$ for $i\in\{1,\ldots,k\}$.
Let $T_\az$ and $T_{\az,\vec b}$ be, respectively, as in \eqref{1.9} and \eqref{1.11}
with kernel $K_\az$ satisfying \eqref{1.7} and \eqref{1.8}. Suppose that $T_\az$ is
bounded from $\lp$ into $\lq$ for all $p\in (1,1/\az)$ and $1/q=1/p-\az$.
Then, there exists a positive constant $C$ such that, for all $\lz\in (0,\fz)$
and $f\in L^{\fz}_b(\mu)$,
\begin{eqnarray*}
&&\mu(\{x\in \cx\,:\,|T_{\az,\vec b}f(x)|>\lz\})\\
&&\hs\le C\lf[\Phi_{1/r}\lf(\prod_{j=1}^k\|b_j\|_{\osj}\r)\r]
\lf[\sum^k_{j=0}\sum_{\sz\in C^k_j}\Phi_{1/r_\sz}
\lf(\|\Phi_{1/r_{\sz^{'}}}(\lz^{-1}|f|)\|_{\lon}\r)\r],
\end{eqnarray*}
where $\Phi_s(t):=t\log^s(2+t)$ for all $t\in(0,\fz)$ and $s\in(0,\fz)$.
\end{theorem}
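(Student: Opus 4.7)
The plan is to follow the pattern of endpoint weak-type estimates for multilinear commutators, adapting Hu--Meng--Yang \cite{hmy1} from the non-doubling Euclidean setting to the non-homogeneous metric measure space $(\cx,d,\mu)$. The proof will proceed by induction on $k$, the base case $k=0$ being the weak $(1,1/(1-\az))$ endpoint bound for $T_\az$ itself, which is available from Theorem \ref{t1.13}(II).

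First I would normalize so that $\lz=1$ and $\|b_j\|_{\osj}=1$ for every $j\in\{1,\ldots,k\}$; this is permissible because the estimate is homogeneous in each $b_j$ and in $f/\lz$. Next, I would apply a Calder\'on--Zygmund type decomposition of $f$ at a level adapted to $\Phi_{1/r}$. Concretely, decompose $f=g+h$, where $g$ is the ``good'' part satisfying $\|g\|_\li\ls 1$ and $\|g\|_\lp\ls \|f\|_\lon^{1/p}$ for some $p\in(1,1/\az)$, and $h=\sum_{i}h_i$ is the ``bad'' part with each $h_i$ supported in a ball $B_i$ satisfying $\int_\cx h_i\,d\mu=0$, the balls $\{B_i\}_i$ having bounded overlap, and $\sum_i|h_i|_{\hon}\ls\|f\|_\lon$. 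An analogous decomposition for non-homogeneous metric measure spaces has been developed in \cite{bd,hyy}. Then $\mu(\{|T_{\az,\vec b}f|>1\})\le\mu(\{|T_{\az,\vec b}g|>1/2\})+\mu(\{|T_{\az,\vec b}h|>1/2\})$.

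For the good part $g$, I would use the commutator expansion
$$
T_{\az,\vec b}g(x)=\sum_{i=0}^k\sum_{\sz\in C^k_i}(-1)^{k-i}\prod_{j\in\sz'}(b_j(x)-c_j)\,T_\az\lf(\prod_{j\in\sz}(b_j-c_j)g\r)(x),
$$
with $c_j=m_{\wz B}(b_j)$ for a suitable reference ball $B$, and then estimate $\mu(\{|T_{\az,\vec b}g|>1/2\})$ by Chebyshev's inequality in $\lq$. By the assumed $L^p\to L^q$ boundedness of $T_\az$ together with a generalized H\"older inequality relating $\bph$-$\bps$ duality for $\Phi(t)=\exp t^{r_j}-1$ and $\Psi(t)=t\log^{1/r_j}(2+t)$, the contribution of the good part is absorbed into the desired right-hand side; this is where the factor $\Phi_{1/r}$ of the product of $\osj$ norms first appears.

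For the bad part, write $T_{\az,\vec b}h=\sum_i T_{\az,\vec b}h_i$ and expand each term using the above commutator identity with $c_j:=m_{\wz{B_i}}(b_j)$. The ``far'' contribution, namely the integral over $\cx\setminus 2B_i$, is controlled by the smoothness condition \eqref{1.8} together with the cancellation $\int h_i\,d\mu=0$, producing integrals of the form $\int_{2S\setminus B_i}[\lz(c_{B_i},d(x,c_{B_i}))]^{\az-1}\prod_{j\in\sz'}|b_j(x)-m_{\wz{B_i}}(b_j)|\,d\mu(x)$ that can be decomposed dyadically and bounded via the John--Nirenberg type inequality encoded in Definition \ref{d1.17} together with the oscillation control $|m_Q(b_j)-m_R(b_j)|\le K_{Q,R}$. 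The ``near'' contribution, where $x\in 2B_i$, is handled by the $L^p\to L^q$ bound on $T_\az$ applied to the localized function $\prod_{j\in\sz}(b_j-c_j)h_i$ combined with an Orlicz H\"older inequality between $L\log^{1/r_\sz}L$ and $\exp L^{r_\sz}$; each factor $b_j-m_{\wz{B_i}}(b_j)$ lies in $\exp L^{r_j}$ on $2B_i$ with controlled norm by Definition \ref{d1.17}, so the product lies in $\exp L^{r_\sz}$ (after an interpolation-type argument among the $r_j$'s), and its pairing with $h_i$ produces exactly the Orlicz quantity $\Phi_{1/r_{\sz'}}(|f|)$ appearing in the statement.

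The main obstacle will be the bad part, specifically, the dyadic decomposition of the ``far'' region and the correct accounting of the coefficients $K_{B_i,2^kB_i}$ and $\wz K_{B_i,\,\cdot}$ in the non-homogeneous geometry: one must show that the sum over dyadic annuli of the products $\prod_{j\in\sz'}\|b_j-m_{\wz{B_i}}(b_j)\|_{\exp L^{r_j},\,2^k B_i,\,\mu/\mu(2^{k+1}B_i)}$ telescopes against the fractional gain $[\lz(c_{B_i},r_{2^kB_i})]^{\az-1}\mu(2^{k+1}B_i)$ to something summable after multiplying by $\|h_i\|_\lon$ and gathered over $i$. The oscillation inequality in Definition \ref{d1.17}(i), combined with \eqref{1.3} and Definition \ref{d1.7}, should provide the needed control, at the cost of an extra factor involving $\Phi_{1/r}(\prod\|b_j\|_{\osj})$, which is exactly the form appearing in the right-hand side of the claimed inequality. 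Finally, summing the good and bad contributions and undoing the normalization yields the full statement.
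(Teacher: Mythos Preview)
Your overall strategy---Calder\'on--Zygmund decomposition plus induction on $k$---matches the paper's, and your treatment of the bad part is broadly in the right spirit. However, there is a genuine gap in your handling of the \emph{good part} $g$.

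You propose to expand $T_{\az,\vec b}g$ via the identity
\[
T_{\az,\vec b}g(x)=\sum_{i=0}^k\sum_{\sz\in C^k_i}(-1)^{k-i}\prod_{j\in\sz'}(b_j(x)-c_j)\,T_\az\Bigl(\prod_{j\in\sz}(b_j-c_j)g\Bigr)(x)
\]
with constants $c_j=m_{\wz B}(b_j)$ tied to ``a suitable reference ball $B$'', and then apply Chebyshev, the $L^p\to L^q$ bound for $T_\az$, and an Orlicz H\"older inequality. This cannot work as stated: the good function $g$ is \emph{not} localized---it lives on all of $\cx\setminus\bigcup_j 6B_j$ plus the $\vz_j$ pieces---so there is no single ball over which the $\exp L^{r_j}$ control of $b_j-c_j$ from Definition~\ref{d1.17} is available. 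Functions in $\osj$ (or $\rbmo$) are not in any global $L^s(\mu)$, so neither the outer factor $\prod_{j\in\sz'}(b_j-c_j)$ nor the inner product $\prod_{j\in\sz}(b_j-c_j)g$ can be put into a Lebesgue or Orlicz space on $\cx$ by H\"older. The expansion is useful only when the function is supported in a fixed ball, which is exactly why it works for each piece $h_j$ of the bad part but not for $g$.

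The paper resolves this by not expanding the commutator on $g$ at all: it invokes the \emph{strong-type} $L^{p_1}\to L^{q_1}$ boundedness of the multilinear commutator $T_{\az,\vec b}$ itself (Theorem~\ref{t1.15}, proved earlier via the sharp maximal function $\wz M^{\#,\az}$), together with $\|g\|_{L^{p_1}(\mu)}\ls t^{q_0(p_1-1)/p_1}\|f\|_{\lon}^{1/p_1}$, to get directly
\[
\mu(\{|T_{\az,\vec b}g|>t\})\ls t^{-q_1}\|g\|_{L^{p_1}(\mu)}^{q_1}\ls t^{-q_0}.
\]
So the missing ingredient in your plan is precisely Theorem~\ref{t1.15}; once you cite it for the good part, the remainder of your outline (commutator expansion with local constants $c_j=m_{\wz{B_i}}(b_j)$, far/near split, smoothness \eqref{1.8} plus cancellation for the far region, generalized H\"older against $\exp L^{r_j}$ for the near region) is essentially the paper's argument for the bad part.
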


\begin{remark}\label{r1.20}
Theorem \ref{t1.19} covers \cite[Theorem 1.1]{hmy4} by
taking $\cx:=\rr^d$, $d$ being the usual Euclidean metric and $\mu$
as in \eqref{1.1}.
\end{remark}

The organization of this paper is as follows.

In Section \ref{s2}, we show Theorem \ref{t1.13} by first
establishing a new interpolation theorem (see Theorem \ref{t2.7} below),
which, when $p_0=\fz$, is just \cite[Theorem 1.1]{ly2}
and whose version on the linear operators over the non-doubling setting is just \cite[Lemma 2.3]{hmy4}.
Moreover, we prove Theorem \ref{t2.7} by borrowing some ideas from the proof
of \cite[Theorem 1.1]{ly2}, which seals some gaps existing
in the proof of \cite[Lemma 2.3]{hmy4}.
The key tool for the proof of Theorem \ref{t2.7} is
the Calder\'on-Zygmund decomposition
in the non-homogeneous setting obtained by Bui and Duong \cite{bd} (see also Lemma
\ref{l2.6} below). Again, using the Calder\'on-Zygmund decomposition (Lemma \ref{l2.6})
and the interpolation theorem (Theorem \ref{t2.7}), together with the full applications
of the geometrical properties of $K_{B,S}$ and the underlying space $(\cx,d,\mu)$,
we then complete the proof of Theorem \ref{t1.13}.

Section \ref{s3} is devoted to proving Theorems \ref{t1.15} and
\ref{t1.19}. We first prove, in Theorem \ref{t3.9} below, that,
if the generalized fractional integral
$T_\az$ ($\az\in (0,1)$) is bounded from $\lp$ into $\lq$ for some
$p\in (1,1/\az)$ and $1/q=1/p-\az$, then so is its commutator
with any $\rbmo$ function,  by borrowing some ideas of
\cite[Theorem 1]{cs}. The main new ingredient appearing
in our approach used for the proof of
Theorem \ref{t3.9} is that we introduce a quantity $\wz{K}_{B,S}^{(\az)}$, which is
a fractional variant of $\wz{K}_{B,S}$
and, in the setting of non-doubling measures,
was introduced by Chen and Sawyer in \cite[Section 1]{cs}. As the case $\wz{K}_{B,S}$,
$\wz{K}_{B,S}^{(\az)}$ also well characterizes
the geometrical properties of balls $B$ and $S$ and, moreover, it preserves all the
properties of $K_{Q,R}^{(\bz)}$ in \cite[Lemma 3]{cs}. To prove Theorem \ref{t3.9},
we also need to introduce the maximal
operator $\wz{M}^{\#,\az}$, associated with $\wz{K}_{B,S}^{(\az)}$, adapted from the
maximal operator $M^{\#,(\bz)}$ in \cite[Section 2]{cs}.
Then we complete the proof of Theorem \ref{t1.15} by the interpolation
theorem in \cite{fyy} on Orlicz spaces and borrowing some ideas from
the proof of \cite[Theorem2]{hmy2}. To obtain the weak type endpoint estimates of multilinear
commutators in Theorem \ref{t1.19}, we need to use the generalized H\"older inequality
over the non-homogeneous setting from \cite[Lemma 4.1]{fyy} and
the Calder\'on-Zygmund decomposition mentioned above.

In Section \ref{s4}, under some weak reverse doubling condition of
the dominating function
$\lz$ (see Section \ref{s4} below),
which is weaker than the
assumption introduced by Bui and Duong in \cite[Subsection 7.3]{bd}:
there exists $m\in(0,\fz)$ such that, for all
$x\in\cx$ and $a,\,r\in (0,\fz)$,
$\lz(x,ar)=a^m\lz(x,r),$
we construct a non-trivial example of generalized fractional integrals
satisfying all the assumptions of this article. The key tool is the weak growth
condition (see Remark \ref{r1.4}(iii)) introduced by Tan and Li \cite{tl},
which is equivalent to the upper doubling condition.

Finally, we make some conventions on notation.
Throughout the whole paper, $C$ stands for a {\it positive constant} which
is independent of the main parameters, but it may vary from line to
line. Moreover, we use $C_{\rho,\gz,\ldots}$ or $C_{(\rho,\gz,\ldots)}$
to denote a positive constant depending on the parameter $\rho,\,\gz,\,\ldots$.
For any ball $B$ and $f\in L^1_\loc(\mu)$,
$m_B(f)$ denotes the \emph{mean value of $f$ over $B$} as in \eqref{1.13}; the center and the
radius of $B$ are denoted, respectively, by $c_B$ and $r_B$.
If $f\le Cg$, we then write $f\ls g$; if $f\ls g\ls f$, we then write $f\sim g$.
For any subset $E$ of $\cx$, we use
$\chi_E$ to denote its {\it characteristic function}.

\section{Proof of Theorem \ref{t1.13}\label{s2}}

\hskip\parindent In this section, we prove Theorem \ref{t1.13}.
We begin with recalling some useful properties of $\dz$ in Definition
\ref{d1.9} (see, for example, \cite[Lemmas 5.1 and 5.2]{h10} and \cite[Lemma 2.2]{hyy}).

\begin{lemma}\label{l2.1}
\begin{itemize}
 \item[\rm(i)] For all balls $B\st R\st S$, $K_{B,R}\le K_{B,S}$.
\vspace{-0.25cm}
 \item[\rm(ii)] For any $\rho\in [1,\fz)$, there exists
a positive constant $C_{(\rho)}$, depending on $\rho$, such that, for
all balls $B\st S$ with
$r_S\le \rho r_B$, $K_{B,S}\le C_{(\rho)}$.
\vspace{-0.25cm}
 \item[\rm(iii)] For any $\az \in(1,\fz)$, there exists
a positive constant $C_{(\az)}$, depending on $\az$, such that, for all balls $B$,
$K_{B,\wz B^\az}\le C_{(\az)}$.
\vspace{-0.25cm}
 \item[\rm(iv)] There exists a positive constant $c$ such that, for all balls
$B\st R\st S$,
$$K_{B,S}\le K_{B,R}+cK_{R,S}.$$
In particular, if $B$ and $R$ are concentric, then $c=1$.
\vspace{-0.25cm}
 \item[\rm(v)] There exists a positive constant $\wz c$ such that, for all balls
$B\st R\st S$,
$K_{R,S}\le \wz cK_{B,S};$
moreover, if $B$ and $R$ are concentric, then $K_{R,S}\le K_{B,S}$.
\end{itemize}
\end{lemma}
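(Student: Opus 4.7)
The plan is to unpack Definition~\ref{d1.5} and exploit three tools throughout: the monotonicity of $r\mapsto\lz(x,r)$, the upper doubling inequality \eqref{1.2}, and the relative-center comparison \eqref{1.3}.

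Item (i) is the easiest: both $K_{B,R}$ and $K_{B,S}$ integrate the same non-negative kernel $[\lz(c_B,d(\cdot,c_B))]^{-1}$ centered at $c_B$, so the inequality reduces to the set inclusion $2R\setminus B\st 2S\setminus B$, which follows from $R\st S$ by comparing centers via the triangle inequality. For item (ii), the hypothesis $r_S\le\rho r_B$ together with $c_B\in S$ forces $2S\setminus B\st B(c_B,3\rho r_B)\setminus B$; decomposing this region into finitely many dyadic annuli $B(c_B,2^{k+1}r_B)\setminus B(c_B,2^k r_B)$ and applying \eqref{1.2} shows that each annulus contributes at most a multiple of $C_\lz$, so the total is bounded by a constant $C_{(\rho)}$ depending only on $\rho$.

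Item (iii) is the technical heart. I write $\wz{B}^\az=\az^{N_B}B$, where $N_B\in\zz_+$ is the smallest index making $\az^{N_B}B$ an $(\az,\bz_\az)$-doubling ball, and split $2\wz{B}^\az\setminus B$ into the annuli $\{\az^k B\setminus\az^{k-1}B\}_{k=1}^{N_B}$ together with $2\wz{B}^\az\setminus\wz{B}^\az$. The failure of $(\az,\bz_\az)$-doubling at scales $\az^j B$ for $j<N_B$ iterates to $\mu(\az^k B)\le\bz_\az^{-(N_B-k)}\mu(\az^{N_B}B)\le\bz_\az^{-(N_B-k)}\lz(c_B,\az^{N_B}r_B)$, while iterating the doubling inequality \eqref{1.2} bounds $\lz(c_B,\az^{N_B}r_B)$ by $C_\lz^{(N_B-k+1)\log_2\az+1}\lz(c_B,\az^{k-1}r_B)$. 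Each annulus therefore contributes a multiple of $(C_\lz^{\log_2\az}/\bz_\az)^{N_B-k}$, and the calibration $\bz_\az\ge\az^{3\nu}>\az^\nu=C_\lz^{\log_2\az}$ dictated by \eqref{1.4} turns the sum over $k$ into a convergent geometric series with a bound depending only on $\az$; the outermost annulus is handled in one step by (ii).

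For item (iv), split $K_{B,S}-1$ as $\int_{2R\setminus B}+\int_{2S\setminus 2R}$ of the kernel centered at $c_B$: the first piece is exactly $K_{B,R}-1$, so it remains to dominate the second by $cK_{R,S}$. For $x\in 2S\setminus 2R$, the chain $d(c_B,c_R)<r_R\le\frac{1}{2}d(x,c_R)$ yields $d(x,c_B)\sim d(x,c_R)$ and, via \eqref{1.3} together with the doubling of $\lz$, the comparison $\lz(c_B,d(x,c_B))\sim\lz(c_R,d(x,c_R))$, so the second piece is $\le c\int_{2S\setminus R}[\lz(c_R,d(\cdot,c_R))]^{-1}\,d\mu\le cK_{R,S}$; when $B$ and $R$ are concentric the $\sim$ becomes equality and the constant collapses to $1$. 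Item (v) is handled by the same comparison applied to the larger region $2S\setminus R\st 2S\setminus B$, collapsing to $K_{R,S}\le K_{B,S}$ in the concentric case. The principal obstacle throughout is item (iii), where the precise calibration of $\bz_\az$ in \eqref{1.4} against the growth constant of $\lz$ must be invoked to produce a summable geometric series; the remaining items reduce to routine bookkeeping with the standard properties of $\lz$.
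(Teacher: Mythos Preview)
The paper does not supply its own proof of Lemma \ref{l2.1}; it merely records the result and refers to \cite[Lemmas 5.1 and 5.2]{h10} and \cite[Lemma 2.2]{hyy}. Your sketch reproduces the standard argument from those sources: a finite dyadic-annulus decomposition together with \eqref{1.2} for (ii); for (iii), iterating the failure of $(\az,\bz_\az)$-doubling at the scales $\az^jB$, $0\le j<N_B$, against the upper-doubling growth of $\lz$, with the calibration $\bz_\az>\az^\nu=C_\lz^{\log_2\az}$ built into \eqref{1.4} producing a convergent geometric series; and for (iv)--(v), the center-change comparison $\lz(c_B,d(\cdot,c_B))\sim\lz(c_R,d(\cdot,c_R))$ on $2S\setminus R$ via \eqref{1.3} and the doubling of $\lz$. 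So there is no alternative proof in the paper to compare against, and your approach is the expected one.

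One small warning on item (i): in an abstract metric space the set inclusion $R\st S$ does \emph{not} by itself force $2R\st 2S$, because $r_R$ may be large even though the set $R$ is small (take a sparse or discrete space). Your sentence ``follows from $R\st S$ by comparing centers via the triangle inequality'' therefore glosses over a genuine subtlety. In every application in this paper, and in the cited references, the balls in question come with enough additional structure (concentricity, or explicit control on the radii) that the inclusion $2R\st 2S$ is immediate from the construction, so this is a bookkeeping point rather than a conceptual gap; but it would be cleaner either to add the hypothesis $2R\st 2S$ (or $r_R\le r_S$ with concentric centers) to (i), or to note explicitly where it comes from in each use.
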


Now we recall the following equivalent characterizations of $\rbmo$ established
in \cite[Proposition 2.10]{hyy}.

\begin{lemma}\label{l2.2}
Let $\rho\in (1,\fz)$ and $f\in L_\loc^1(\mu)$. The following statements are equivalent:

{\rm(a)} $f\in \rbmo$;

{\rm(b)} there exists a positive constant $C$ such
that, for all balls $B$,
\begin{equation*}
\frac{1}{\mu(\rho B)}\int_{B}\lf|f(x)-m_{\wz B}f\r|\,d\mu(x)\le C
\end{equation*}
and, for all doubling balls $B\st S$,
\begin{equation}\label{2.1}
|m_B(f)-m_{S}(f)|\le CK_{B,S}.
\end{equation}
Moreover, let $\|f\|_*$ be the infimum of all admissible constants $C$ in {\rm(b)}.
Then there exists a constant $\wz C\in [1,\fz)$
such that, for all $f\in\rbmo$,
$\|f\|_*/\wz C \le \|f\|_{\rbmo}\le \wz C\|f\|_*$.
\end{lemma}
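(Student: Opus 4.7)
The plan is to prove both implications by passing between the representatives $f_B$ in Definition \ref{d1.8} and the means $m_{\wz B}(f)$, exploiting two basic facts: $\wz B$ is $(6,\bz_6)$-doubling so $\mu(\rho\wz B)\sim\mu(\wz B)$ for $\rho\in(1,6]$, and Lemma \ref{l2.1}(iii) gives $K_{B,\wz B}\ls 1$ uniformly in $B$.

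For (a)$\Rightarrow$(b), given $f\in\rbmo$ with representatives $\{f_B\}$ satisfying \eqref{1.5} and \eqref{1.6}, I would first establish $|f_B-m_{\wz B}(f)|\ls\|f\|_{\rbmo}$ for every ball $B$. Applying \eqref{1.5} with $B$ replaced by $\wz B$ and using the doubling of $\wz B$ yields $|f_{\wz B}-m_{\wz B}(f)|\ls\|f\|_{\rbmo}$, while \eqref{1.6} together with Lemma \ref{l2.1}(iii) gives $|f_{\wz B}-f_B|\le\|f\|_{\rbmo}K_{B,\wz B}\ls\|f\|_{\rbmo}$. The oscillation part of (b) then follows from \eqref{1.5} and the triangle inequality. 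For \eqref{2.1}, applied to doubling balls $B\st S$ (so $\wz B=B$ and $\wz S=S$), the triangle decomposition
\[
|m_B(f)-m_S(f)|\le|m_B(f)-f_B|+|f_B-f_S|+|f_S-m_S(f)|\ls K_{B,S}\|f\|_{\rbmo}
\]
combines the outer bounds just shown with \eqref{1.6} applied to the middle term.

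For (b)$\Rightarrow$(a), I would define $f_B:=m_{\wz B}(f)$, so \eqref{1.5} is precisely the oscillation inequality in (b). The substantive step is \eqref{1.6}: for $B\st B_1$, to show that
\[
\lf|m_{\wz B}(f)-m_{\wz{B_1}}(f)\r|\ls K_{B,B_1}.
\]
My approach is to introduce a common doubling super-ball $S$ containing $\wz B\cup\wz{B_1}$, apply \eqref{2.1} to each of the inclusions $\wz B\st S$ and $\wz{B_1}\st S$, and control $K_{\wz B,S}+K_{\wz{B_1},S}$ by a multiple of $K_{B,B_1}$. A natural choice is the smallest $(6,\bz_6)$-doubling ball concentric with $B_1$ containing $\wz B\cup\wz{B_1}$, whose existence is ensured by the Hyt\"onen selection procedure \cite[Lemma 3.3]{h10}.

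The hard part will be the bound $K_{\wz B,S}+K_{\wz{B_1},S}\ls K_{B,B_1}$. I plan to argue by cases according to whether $\wz B\st\wz{B_1}$. In the easy case, setting $S=\wz{B_1}$ reduces matters to $K_{\wz B,\wz{B_1}}\ls K_{B,\wz{B_1}}\ls K_{B,B_1}+K_{B_1,\wz{B_1}}\ls K_{B,B_1}$ through Lemma \ref{l2.1}(iii)--(v). In the opposite case, $\wz B$ escapes $\wz{B_1}$, forcing $\wz B$ to be substantially larger than $\wz{B_1}$; the uniform bound $K_{B,\wz B}\ls 1$ of Lemma \ref{l2.1}(iii) then, via the monotonicity and quasi-triangle relations in Lemma \ref{l2.1}(i), (iv), (v), allows $K_{B_1,S}$ and $K_{\wz B,S}$ to be absorbed into $K_{B,B_1}$, yielding \eqref{1.6}. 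The equivalence of norms $\|f\|_*/\wz C\le\|f\|_{\rbmo}\le\wz C\|f\|_*$ follows by tracking the multiplicative constants accumulated in each direction of the argument.
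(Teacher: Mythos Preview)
The paper does not prove Lemma \ref{l2.2} at all; it merely recalls the statement, citing it as \cite[Proposition 2.10]{hyy}. So there is no in-paper proof to compare your proposal against. Your sketch follows the standard route used in that reference: pass between the abstract numbers $f_B$ and the concrete means $m_{\wz B}(f)$ via the bounds $|f_B-f_{\wz B}|\ls K_{B,\wz B}\ls 1$ and $|f_{\wz B}-m_{\wz B}(f)|\ls 1$, the latter from the doubling of $\wz B$.

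Two minor comments. First, under this paper's convention $\wz B$ is the smallest doubling ball among $\{6^jB:j\in\nn\}$ with $0\notin\nn$, so a doubling ball $B$ need not satisfy $\wz B=B$; your parenthetical ``(so $\wz B=B$ and $\wz S=S$)'' is therefore not literally correct, but the argument is unaffected since for any doubling $B$ one has $|m_B(f)-f_B|\le\frac{\mu(\rho B)}{\mu(B)}\|f\|_{\rbmo}\le\bz_6\|f\|_{\rbmo}$ directly from \eqref{1.5}. Second, your Case 2 in (b)$\Rightarrow$(a) is sketched loosely; it does go through, and the cleanest way to see it is that $\wz B\not\subset\wz{B_1}$ together with $c_B\in B_1$ forces $r_{\wz B}\sim r_{\wz{B_1}}$, hence $\wz B\cup\wz{B_1}\subset 6\wz{B_1}$, and taking $S:=\wz{6\wz{B_1}}$ gives $K_{\wz B,S}+K_{\wz{B_1},S}\ls 1\le K_{B,B_1}$ by Lemma \ref{l2.1}(ii)--(iv).
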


We also need the following conclusion, which is just \cite[Corollary 3.3]{fyy}.

\begin{corollary}\label{c2.3}
If $f\in\rbmo$, then there exists a positive constant $C$ such
that, for any ball $B$, $\rho\in(1,\fz)$ and $r\in [1,\fz)$,
\begin{equation}\label{2.2}
\left\{\frac1{\mu(\rho B)}\int_{B}\lf|f(x)-m_{\wz{B}}f\r|^r
\,d\mu(x)\right\}^{1/r}\le C\|f\|_{\rbmo}.
\end{equation}
Moreover, the infimum of the positive constants $C$ satisfying both
\eqref{2.2} and \eqref{2.1} is an equivalent
$\rbmo$-norm of $f$.
\end{corollary}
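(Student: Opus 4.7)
The plan is to derive this $L^r$ bound from an exponential-decay (John--Nirenberg-type) estimate for functions in $\rbmo$ on a non-homogeneous metric measure space, and then integrate the distribution function. More precisely, I would first establish that, for every $f\in\rbmo$, every ball $B$, and every $\rho\in(1,\fz)$, there exist positive constants $c_1,\,c_2$, independent of $f$ and $B$, such that, for all $t\in(0,\fz)$,
\begin{equation*}
\mu\lf(\lf\{x\in B:\ |f(x)-m_{\wz B}(f)|>t\r\}\r)
\le c_1\mu(\rho B)\exp\lf(-c_2t/\|f\|_{\rbmo}\r).
\end{equation*}
Such an inequality is the non-homogeneous metric measure analogue of the classical John--Nirenberg inequality; its proof proceeds via a Calder\'on--Zygmund-type stopping-time argument inside $B$ (using Lemma \ref{l2.6} cited later in the paper), combined with the control \eqref{2.1} on the oscillation between doubling balls supplied by Lemma \ref{l2.2}. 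The latter is exactly what lets one iterate the stopping time while keeping track of how far $m_{\wz B_j}(f)$ drifts from $m_{\wz B}(f)$ along successive generations; the factor $K_{B_j,B}$ in the atomic-block normalization, and property (i) of Lemma \ref{l2.1}, keep these drifts summable.

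Once the exponential estimate is in hand, the bound \eqref{2.2} follows immediately by the layer-cake formula
\begin{equation*}
\int_B|f(x)-m_{\wz B}(f)|^r\,d\mu(x)
= r\int_0^{\fz}t^{r-1}\mu\lf(\lf\{x\in B:\ |f(x)-m_{\wz B}(f)|>t\r\}\r)\,dt,
\end{equation*}
applied to the exponential bound above; the $t$-integral produces a factor $\Gamma(r+1)(c_2/\|f\|_{\rbmo})^{-r}$, which gives, after dividing by $\mu(\rho B)$ and taking the $r$-th root,
\begin{equation*}
\lf\{\frac{1}{\mu(\rho B)}\int_B|f(x)-m_{\wz B}(f)|^r\,d\mu(x)\r\}^{1/r}
\ls c_1^{1/r}[\Gamma(r+1)]^{1/r}c_2^{-1}\|f\|_{\rbmo},
\end{equation*}
which is \eqref{2.2}.

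For the statement about the equivalent norm, write $\|f\|_{*,r}$ for the infimum of positive constants admissible in both \eqref{2.2} and \eqref{2.1}. On the one hand, \eqref{2.2} and \eqref{2.1} together specialize, by choosing $r=1$, to the two conditions in Lemma \ref{l2.2}(b), so $\|f\|_*\le\|f\|_{*,r}$; combined with Lemma \ref{l2.2}, this yields $\|f\|_{\rbmo}\ls\|f\|_{*,r}$. On the other hand, the argument in the previous paragraph shows $\|f\|_{*,r}\ls\|f\|_{\rbmo}$, because \eqref{2.1} is already built into $\|f\|_{\rbmo}$ by Lemma \ref{l2.2}. Combining these two comparisons gives the claimed equivalence.

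The main obstacle is the John--Nirenberg inequality itself in this non-homogeneous metric setting, because one cannot use the dyadic cube decompositions of $\rd$: one must replace them by suitable families of doubling balls produced from Hyt\"onen's construction, and one must carefully propagate the replacement of the centers $f_B$ (as in Definition \ref{d1.8}) by the means $m_{\wz B}(f)$ through the iteration, using \eqref{2.1} and Lemma \ref{l2.1}(ii)--(iii) to absorb the associated $K$-factors into the exponential. Granted this, the rest of the argument is a routine layer-cake computation.
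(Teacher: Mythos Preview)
Your approach is correct and is essentially the standard route to such $L^r$ John--Nirenberg-type bounds. Note, however, that the paper does not prove Corollary~\ref{c2.3} at all: it simply cites it as \cite[Corollary~3.3]{fyy}. So there is no ``paper's own proof'' to compare against.

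That said, one remark on your plan: the exponential John--Nirenberg inequality you describe as ``the main obstacle'' is in fact already available in this non-homogeneous metric setting---it is \cite[Corollary~6.3]{h10}, which the present paper invokes repeatedly (see, e.g., the estimates of \eqref{3.7} and \eqref{3.8}). You therefore do not need to redo the stopping-time argument from scratch; you can simply quote Hyt\"onen's result and then carry out the layer-cake computation exactly as you wrote. The equivalence-of-norms part is handled precisely as you say, via H\"older's inequality (to go from the $r=1$ case to general $r$ in one direction) and Lemma~\ref{l2.2}.
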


The following interpolation result is from \cite[Theorem 2.2]{fyy}.
\begin{lemma}\label{l2.4}
Let $\az\in [0,1)$, $p_{i},\,q_{i}\in (0,\fz)$ satisfy $1/q_{i}=1/p_{i}-\az$ for
$i\in\{1,2\}$, $p_{1}<p_{2}$ and $T$ be a sublinear
operator of weak type $(p_{i},q_{i})$ for $i\in\{1,2\}$. Then $T$
is bounded from $\bph$ to $\bps$, where $\Phi$ and
$\Psi$ are convex Orlicz functions satisfying the following conditions:
$1<p_{1}<a_{\Phi}\le b_{\Phi}<p_{2}<\fz$, $1<q_{1}<a_{\Psi}\le b_{\Psi}<q_{2}<\fz$
and, for all $t\in (0,\infty)$, $\Psi^{-1}(t)=\Phi^{-1}(t)t^{-\az}$.
\end{lemma}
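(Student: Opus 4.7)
The plan is to prove Lemma \ref{l2.4} via the standard Marcinkiewicz-type argument adapted to Orlicz spaces, using the layer-cake formula together with a level-dependent decomposition of $f$. By homogeneity and the definition of the Luxemburg norm, it suffices to show that there exists a constant $C_0\in(0,\fz)$ such that, for every $f$ with $\|f\|_{\bph}\le1$, one has $\int_\cx \Psi(|Tf(x)|/C_0)\,d\mu(x)\le 1$.

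First I would rewrite the Orlicz integral on the image side in layer-cake form,
\[
\int_\cx \Psi\!\lf(\frac{|Tf(x)|}{C_0}\r)\,d\mu(x)
=\int_0^\fz \Psi'(\lz)\,\mu\!\lf(\lf\{x\in\cx:\,|Tf(x)|>C_0\lz\r\}\r)\,d\lz,
\]
and then, for each $\lz\in(0,\fz)$, decompose $f=f^\lz+f_\lz$ where $f^\lz:=f\chi_{\{|f|>h(\lz)\}}$ with a cutoff level $h(\lz)$ to be chosen momentarily (the natural choice, inspired by the identity $\Psi^{-1}(t)=\Phi^{-1}(t)t^{-\az}$, is $h(\lz):=\Phi^{-1}(\Psi(\lz))$, so that $\Psi(\lz)=\Phi(h(\lz))$ and $h(\lz)=\lz^{1/(1-\az)}$ in the pure $L^p$ case). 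By sublinearity of $T$,
\[
\mu(\{|Tf|>C_0\lz\})\le \mu(\{|Tf^\lz|>C_0\lz/2\})+\mu(\{|Tf_\lz|>C_0\lz/2\}).
\]
To the first (large) piece I would apply the weak type $(p_1,q_1)$ bound and to the second (small, bounded) piece the weak type $(p_2,q_2)$ bound, obtaining
\[
\mu(\{|Tf|>C_0\lz\})
\ls \lz^{-q_1}\lf(\int_{\{|f|>h(\lz)\}}|f|^{p_1}\,d\mu\r)^{q_1/p_1}
+\lz^{-q_2}\lf(\int_{\{|f|\le h(\lz)\}}|f|^{p_2}\,d\mu\r)^{q_2/p_2}.
\]

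Next I would insert these two estimates into the layer-cake representation, interchange the order of integration via Fubini, and recast the result as Orlicz integrals with respect to $\Phi$. Here the identity $\Psi^{-1}(t)=\Phi^{-1}(t)t^{-\az}$ (equivalently $\Phi(u)\sim u^{\az q_i}\Psi(u^{q_i/p_i}\cdot\text{(const)})$ along each scale) is what converts $\lz^{-q_i}$ weights against $\Psi'(\lz)$ into a power of $|f|$ that is compatible with $\Phi$. The Boyd-index hypotheses $p_1<a_\Phi\le b_\Phi<p_2$ and $q_1<a_\Psi\le b_\Psi<q_2$ are precisely what is needed to guarantee the absolute convergence of the resulting double integrals: the condition $b_\Phi<p_2$ controls the small-values contribution (integration near $\lz\to\fz$), while $a_\Phi>p_1$ controls the large-values contribution (integration near $\lz\to0$), with the analogous indices on $\Psi$ ensuring that the raw $\Psi'(\lz)$ weight cooperates. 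Putting everything together, one obtains
\[
\int_\cx \Psi\!\lf(\frac{|Tf(x)|}{C_0}\r)\,d\mu(x)
\ls \int_\cx \Phi(|f(x)|)\,d\mu(x)\le 1,
\]
provided $C_0$ is taken large enough, depending only on the data $p_i,q_i,a_\Phi,b_\Phi,a_\Psi,b_\Psi$ and on the weak type constants of $T$.

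The main obstacle is the bookkeeping in the last step: verifying that the two double integrals produced by Fubini actually reduce to a constant multiple of $\int_\cx \Phi(|f|)\,d\mu$. This relies on the elementary but slightly delicate Orlicz inequalities
\[
\int_0^{s}\Psi'(\lz)\lz^{-q_2}h(\lz)^{p_2-q_2p_2/q_2}\,\frac{d\lz}{\lz}\cdots
\quad\text{and its dual near }\lz=0,
\]
which must be controlled using the Boyd indices $a_\Phi,b_\Phi,a_\Psi,b_\Psi$ and the relation $\Psi\circ h=\Phi$. This is precisely the point where one invokes the assumptions $p_1<a_\Phi$ and $b_\Phi<p_2$ to extract finite geometric-series sums. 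Everything else—the layer-cake formula, the sublinear decomposition, and the application of the two weak type bounds—is routine, so the only real work is carrying out this Orlicz-index computation carefully.
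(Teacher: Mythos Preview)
The paper does not actually prove this lemma; it is quoted verbatim from \cite[Theorem~2.2]{fyy}. Your Marcinkiewicz--Orlicz scheme with the cutoff $h(\lz)=\Phi^{-1}(\Psi(\lz))$ is the standard route and is, in outline, correct---it is presumably very close to what \cite{fyy} carries out.

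One point, however, is not routine and deserves more than the phrase ``interchange the order of integration via Fubini.'' After applying the two weak-type estimates you arrive at terms of the form
\[
\int_0^\fz \Psi'(\lz)\,\lz^{-q_i}\lf(\int_{E_\lz}|f|^{p_i}\,d\mu\r)^{q_i/p_i}d\lz,
\]
and since $q_i/p_i=1/(1-\az p_i)>1$ whenever $\az>0$, the inner integral is raised to a power strictly greater than $1$: this is \emph{not} a double integral and Fubini is unavailable. The standard fix is to linearize first. From $a_\Phi>p_1$ the function $t\mapsto t^{p_1}/\Phi(t)$ is non-increasing, and from $b_\Phi<p_2$ the function $t\mapsto t^{p_2}/\Phi(t)$ is non-decreasing; combined with the normalization $\int_\cx\Phi(|f|)\,d\mu\le1$ this gives
\[
\int_{E_\lz}|f|^{p_i}\,d\mu\ls \frac{h(\lz)^{p_i}}{\Phi(h(\lz))}
\]
for both $i\in\{1,2\}$. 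You then peel off the excess exponent $q_i/p_i-1$ using this bound and absorb it into the weight via the identity $\Phi(h(\lz))=\Psi(\lz)$ and the relation $h(\lz)=\lz\,[\Psi(\lz)]^{\az}$ that follows from $\Psi^{-1}(t)=\Phi^{-1}(t)t^{-\az}$. Only after that reduction is the integrand genuinely linear in $\int_{E_\lz}|f|^{p_i}\,d\mu$, and then Fubini plus the Boyd-index computation you sketch finishes the job. Your final paragraph correctly locates \emph{where} the difficulty lies but not quite \emph{what} it is; the nonlinearity from $q_i/p_i>1$ is the extra ingredient specific to the off-diagonal case $\az>0$.
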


We also recall some results in \cite[Subsection 4.1]{bd}
and \cite[Corollary 3.6]{h10}.

\begin{lemma}\label{l2.5}
{\rm(i)} Let $p\in (1,\fz)$, $r\in (1,p)$
and $\rho\in (0,\fz)$.
The following maximal operators defined, respectively, by setting, for all $f\in L^1_{\loc}(\mu)$
and $x\in\cx$,
$$M_{r,\rho}f(x):=\sup_{Q\ni x}\lf[\frac{1}{\mu(\rho Q)}
\int_{Q}|f(y)|^{r}\,d\mu(y)\r]^{\frac{1}{r}},$$
$$Nf(x):=\sup_{Q\ni x,\,Q\,{\rm doubling}}\frac{1}{\mu(Q)}\int_{Q}|f(y)|\,d\mu(y)$$
and
$$M_{(\rho)}f(x):=\sup_{Q\ni x}\frac{1}{\mu(\rho Q)}\int_{Q}|f(y)|\,d\mu(y),$$
are bounded on $\lp$ and also bounded from $\lon$ into $L^{1,\fz}(\mu)$.

{\rm(ii)} For all $f\in L_{\loc}^1(\mu)$, it holds true that
$|f(x)|\le Nf(x)$ for $\mu$-almost every $x\in \cx$.
\end{lemma}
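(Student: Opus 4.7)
The plan is to follow the standard Vitali-type covering approach adapted to the non-homogeneous setting, in the spirit of Bui--Duong \cite{bd} and Hyt\"onen \cite{h10}. Since the three operators in (i) are linked, I would handle $M_{(\rho)}$ first and then deduce the others from it.

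For $M_{(\rho)}$, the key is weak type $(1,1)$. Given $f\in\lon$ and $\lz\in(0,\fz)$, for each $x$ in the level set $E_\lz:=\{M_{(\rho)}f>\lz\}$ pick a ball $Q_x\ni x$ with $\mu(\rho Q_x)<\lz^{-1}\int_{Q_x}|f|\,d\mu$. Using the geometrically doubling property in the form of Remark \ref{r1.2}(iv), extract a countable pairwise disjoint subfamily $\{Q_i\}$ such that $E_\lz\st\bigcup_i\rho Q_i$ up to a $\mu$-null set (this is the analogue of the $5r$-covering lemma, valid since bounded overlap of concentric dilates is controlled by $N_0$). Then
$$\mu(E_\lz)\le\sum_i\mu(\rho Q_i)\le\lz^{-1}\sum_i\int_{Q_i}|f|\,d\mu\le\lz^{-1}\|f\|_{\lon},$$
which is the weak $(1,1)$ bound. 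Since $M_{(\rho)}$ is trivially bounded on $\li$, the Marcinkiewicz interpolation theorem yields boundedness on $\lp$ for every $p\in(1,\fz)$. For $N$, observe that if $Q\ni x$ is doubling then, using Definition \ref{d1.7} with parameters $(6,\bz_6)$, we have $\mu(\rho Q)\ls\mu(Q)$ for any fixed $\rho\in(1,\fz)$ (iterate the doubling property a bounded number of times), so $Nf(x)\ls M_{(\rho)}f(x)$ pointwise, and the desired bounds for $N$ follow. For $M_{r,\rho}$, note the identity $M_{r,\rho}f(x)=[M_{(\rho)}(|f|^r)(x)]^{1/r}$; since $p/r>1$, the conclusion for $M_{r,\rho}$ on $\lp$ follows from that for $M_{(\rho)}$ on $L^{p/r}(\mu)$.

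For part (ii), I would invoke Hyt\"onen's result (cf.\ \cite[Lemma 3.3]{h10}) that for $\mu$-almost every $x\in\cx$ there exist arbitrarily small $(6,\bz_6)$-doubling balls centered at $x$. Combined with a Lebesgue differentiation theorem for doubling balls in the non-homogeneous setting (itself a standard consequence of the weak $(1,1)$ bound for $M_{(\rho)}$ proved above, via a truncation and density argument using the $\lp$-bound), this yields $\lim_{r\to 0^+}m_{B(x,r)}(f)=f(x)$ along sequences of doubling balls centered at $x$, and therefore $|f(x)|\le Nf(x)$ $\mu$-almost everywhere.

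The main technical obstacle is the covering step: in the non-homogeneous setting one cannot rely on the classical Vitali lemma driven by measure doubling, so the disjointification must be carried out purely metrically via the geometrically doubling hypothesis (Remark \ref{r1.2}). Once this covering is in place, the remaining deductions are routine. Since all of these assertions are already recorded in \cite[Subsection 4.1]{bd} and \cite[Corollary 3.6]{h10}, it is appropriate to present this as a recollection and refer to those sources for full details.
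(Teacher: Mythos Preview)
The paper does not prove this lemma; it is simply quoted from \cite[Subsection~4.1]{bd} and \cite[Corollary~3.6]{h10}. Your proposal ends by citing the same sources, so your approach matches the paper's, with the bonus of a correct outline of the standard arguments behind those citations.

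Two small corrections to the sketch. First, you cannot iterate the $(6,\bz_6)$-doubling condition: if $Q$ is $(6,\bz_6)$-doubling, $6Q$ need not be, so the claim ``$\mu(\rho Q)\ls\mu(Q)$ for any fixed $\rho\in(1,\fz)$'' is not available. What you actually have is $\mu(6Q)\le\bz_6\mu(Q)$, hence $Nf\le\bz_6\,M_{(6)}f$ pointwise, and this single comparison already yields the bounds for $N$. Second, your covering step for $M_{(\rho)}$ needs $\rho\ge 5$ so that the $5r$-covering lemma gives $E_\lz\st\bigcup_i 5Q_i\st\bigcup_i\rho Q_i$; the range $\rho\in(0,\fz)$ in the stated lemma is an imprecision of the statement (and the paper only ever invokes it with $\rho\in\{5,6\}$), not a defect in your argument.
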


Before we prove Theorem \ref{t1.13}, we establish
a new interpolation theorem, which is adapted from \cite[Theorem 1.1]{ly2}.
To this end, we first recall the following Calder\'on-Zygmund decomposition
theorem obtained by Bui and Duong \cite[Theorem 6.3]{bd}.
Let $\gz_0$ be a fixed positive constant
satisfying that $\gz_0>\max\{C_\lz^{3\log_26}, 6^{3n}\}$, where
$C_\lz$ is as in \eqref{1.2} and $n$
as in Remark \ref{r1.2}{\rm(ii)}.

\begin{lemma}\label{l2.6}
Let $p\in[1,\,\fz)$, $f\in L^p(\mu)$ and $t\in(0,\,\fz)$
($t>\gz_0^{1/p}\|f\|_\lp/[\mu(\cx)]^{1/p}$ when $\mu(\cx)<\fz$). Then
\begin{itemize}
\vspace{-0.3cm}
\item[\rm(i)] there exists a family of finite overlapping balls $\{6B_j\}_j$ such
that $\{B_j\}_j$ is pairwise disjoint,
\begin{equation}\label{2.3}
\frac{1}{\mu\lf(6^2B_j\r)}\int_{B_j}|f(x)|^p\,d\mu(x)>
\frac {t^p}{\gz_0}\
\textrm{for all}\ j,
\end{equation}
$$\frac1{\mu(6^2\eta B_j)}\int_{\eta B_j}|f(x)|^p\,d\mu(x)
\le\frac{t^p}{\gz_0}\ \textrm{for all}\ j\
\textrm{and all}\ \eta\in(2,\,\infty)$$
and
\begin{eqnarray}\label{2.4}
 |f(x)|\le t \,\,\textrm{for}\,\,\mu\textrm{-almost every}\ x\in\cx\setminus(\cup_j 6B_j);
\end{eqnarray}

\vspace{-0.3cm}
\item[\rm(ii)] for each $j$, let $R_j$ be a
$(3\times 6^2, C_\lz^{\log_2(3\times6^2)+1})$-doubling ball of
the family $\{(3\times 6^2)^k B_j\}_{k\in\nn}$, and
$\omega_j:=\chi_{6B_j}/(\sum_k \chi_{6B_k})$.
Then there exists a family $\{\vz_j\}_j$ of
functions such that, for each $j$, $\supp(\vz_j)\subset R_j$,
$\vz_j$ has a constant sign on $R_j$,
\begin{equation}\label{2.5}
\dint_\cx \vz_j(x)\,d\mu(x)=\dint_{6B_j} f(x)\omega_j(x)\,d\mu(x)
\end{equation}
and
\begin{equation}\label{2.6}
\sum_{j}|\vz_j(x)|\le \gz t \,\,\textrm{for}\,\,\mu\textrm{-almost every\,\,} x\in\cx,
\end{equation}
where $\gz$ is a positive constant depending only on
$(\cx, \mu)$ and there exists a positive constant $C$, independent of
$f$, $t$ and $j$, such that, if $p=1$, then
\begin{equation}\label{2.7}
\|\vz_j\|_\li\mu(R_j)\le C\dint_{\cx}|f(x)\omega_j(x)|\,d\mu(x)
\end{equation}
and, if $p\in (1,\,\fz)$, then
\begin{equation}\label{2.8}
\lf\{\dint_{R_j}|\vz_j(x)|^p\,d\mu(x)\r\}^{1/p}
[\mu(R_j)]^{1/p'}\le \frac{C}{t^{p-1}}\dint_{\cx}|f(x)\omega_j(x)|^p\,d\mu(x);
\end{equation}

\vspace{-0.3cm}
\item[\rm(iii)] when $p\in(1,\fz)$, if, for any $j$, choosing $R_j$ to be the smallest
$(3\times 6^2, C_\lz^{\log_2(3\times6^2)+1})$-doubling ball of
the family $\{(3\times 6^2)^k B_j\}_{k\in\nn}$, then $h:=\sum_j(f\oz_j-\vz_j)\in\hon$
and there exists a positive constant $C$, independent of $f$ and $t$, such that
\begin{equation}\label{2.9}
\|h\|_{\hon}\le\frac{C}{t^{p-1}}\|f\|^p_{\lp}.
\end{equation}
\end{itemize}
\end{lemma}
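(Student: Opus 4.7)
My plan is to follow the classical Calder\'on--Zygmund strategy, adapted from Tolsa's non-doubling construction to the present upper-doubling, geometrically doubling setting, carrying out three independent constructions tied to parts (i), (ii), (iii).

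For part (i), I would run a stopping-time argument at level $t$. For $\mu$-almost every $x\in\cx$ with $|f(x)|>t$, use Lemma \ref{l2.5}(ii) and Lebesgue differentiation to conclude that some ball centered at $x$ violates the stopping condition (2.3). Since $f\in\lp$ and since, when $\mu(\cx)<\fz$, the lower bound on $t$ prevents averages over huge balls from being large, there is a \emph{maximal} radius $r_x$ at which (2.3) holds while for every larger dilation by $\eta>2$ the reverse inequality $\mu(6^2\eta B)^{-1}\int_{\eta B}|f|^p\,d\mu\le t^p/\gz_0$ holds; this uses the upper doubling \eqref{1.2} to pass from integer to real dilations. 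From the resulting cover of $\{|f|>t\}$, a Vitali/Besicovitch-type extraction based on the geometric doubling of $(\cx,d)$ (Remark \ref{r1.2}(iii)--(iv)) produces a pairwise disjoint subfamily $\{B_j\}$ whose $6$-dilates have bounded overlap; the pointwise bound (2.4) follows from $|f|\le Nf$ outside $\cup_j 6B_j$.

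For part (ii), I would select each $R_j$ as a $(3\times 6^2,C_\lz^{\log_2(3\times 6^2)+1})$-doubling ball in the chain $\{(3\times 6^2)^k B_j\}_{k\in\nn}$, which exists by the result recalled before Definition \ref{d1.7}. The natural candidate is
\begin{equation*}
\vz_j:=\frac{1}{\mu(R_j)}\lf(\int_{6B_j}f(x)\oz_j(x)\,d\mu(x)\r)\chi_{R_j},
\end{equation*}
which is constant, hence of constant sign on $R_j$, and verifies (2.5) by construction. The pointwise bound (2.6) reduces, after using bounded overlap of $\{6B_j\}$, to the estimate
\begin{equation*}
\frac{1}{\mu(R_j)}\int_{6B_j}|f|\oz_j\,d\mu\ls t,
\end{equation*}
which follows from H\"older's inequality, (2.3), and the doubling of $R_j$ together with the stopping inequality at the dilation chosen large enough (here the choice $\gz_0>C_\lz^{3\log_2 6}$ is used to absorb the passage from $B_j$ through its enlargements up to $R_j$). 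The norm bounds (2.7) and (2.8) then follow from H\"older and the same comparison $\mu(R_j)\ls \mu(6^2 B_j)^{1-1/p}\lf(t^{-p}\int|f\oz_j|^p\,d\mu\r)^{?}$, which the stopping inequality (2.3) was designed to make tight.

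For part (iii), with $R_j$ chosen as the \emph{smallest} such doubling ball in its chain, set $b_j:=f\oz_j-\vz_j$. By (2.5), $\int b_j\,d\mu=0$; moreover $b_j$ is supported in $R_j$ and decomposes as $\lz_1 a_1+\lz_2 a_2$, with $a_1$ a multiple of $f\oz_j$ (supported in $6B_j$) and $a_2$ a multiple of $\chi_{R_j}$. To match Definition \ref{d1.11}, the minimality of $R_j$ forces $K_{6B_j,R_j}\ls 1$ via Lemma \ref{l2.1}(ii)--(iii), so one only needs the $L^p$ normalization of the two atoms; applying (2.8) yields $|b_j|_{H^1}\ls t^{1-p}\int|f\oz_j|^p\,d\mu$, and summing over $j$, together with bounded overlap of $\{6B_j\}$, gives (2.9).

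The main obstacle is the pointwise bound (2.6): it hinges on the correct interplay between the stopping constant $\gz_0$, the doubling ratio of $R_j$, and the bounded-overlap constant for $\{6B_j\}$, since a na\"{\i}ve estimate only shows $\sum|\vz_j|\ls t$ on each $R_j$ and one must control how many $R_j$ can overlap a given point; this is what dictates the specific lower bound $\gz_0>\max\{C_\lz^{3\log_2 6},6^{3n}\}$. A secondary difficulty is the atomic block estimate in (iii), which requires the minimality of $R_j$ and Lemma \ref{l2.1}(iii) to keep the coefficient $K_{6B_j,R_j}$ bounded independently of $j$; without this, the $H^1$-norm of $h$ could not be controlled by $\|f\|_{L^p}^p$.
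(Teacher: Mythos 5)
The paper does not prove Lemma \ref{l2.6}; it is quoted from Bui and Duong \cite[Theorem 6.3]{bd}, which adapts Tolsa's Calder\'on--Zygmund decomposition to the non-homogeneous setting. Against that benchmark, your outline of parts (i) and (iii) is in the right spirit, but part (ii) contains a genuine gap. Setting
\begin{equation*}
\vz_j:=\frac{1}{\mu(R_j)}\left(\int_{6B_j}f\oz_j\,d\mu\right)\chi_{R_j}
\end{equation*}
does give $\supp\vz_j\subset R_j$, a single sign, the moment identity \eqref{2.5}, and, via the stopping inequality at $\eta=6$ together with the $(3\times 6^2,\cdot)$-doubling of $R_j$, the single-term bound $|\vz_j|\ls t\chi_{R_j}$. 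But it does \emph{not} give \eqref{2.6}: each $R_j$ is obtained by dilating $B_j$ through the chain $(3\times 6^2)^k B_j$ until a doubling ball appears, the number of steps is uncontrolled, and a fixed point can lie in infinitely many of the $R_j$ even though $\{6B_j\}$ has bounded overlap. No enlargement of $\gz_0$ repairs this; $\gz_0$ is a structural constant ensuring existence of doubling balls and normalizing the stopping ratio $\mu(6^2B_j)^{-1}\int_{B_j}|f|^p\,d\mu$, and it says nothing about the overlap of the far-away dilates $R_j$, contrary to what your final paragraph suggests.

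The actual construction of $\{\vz_j\}$ in \cite{bd}, following Tolsa, is recursive and is the real mathematical content of (ii): after ordering the family, $\vz_j$ is taken to be constant (of the prescribed sign) only on the subset $A_j\subset R_j$ where the already-built partial sum $\sum_{i<j}|\vz_i|$ does not exceed $2Bt$, and zero elsewhere; a mass-balance argument using the doubling of $R_j$ and the finite overlap of $\{6B_j\}$ shows $\mu(A_j)\gs\mu(R_j)$, so the required integral \eqref{2.5} can be placed with $\|\vz_j\|_\fz\ls t$, and then $\sum_j|\vz_j|\ls t$ everywhere by design. Your sketch has no counterpart to this recursion, and the estimates \eqref{2.7} and \eqref{2.8} also come out of it rather than out of H\"older alone. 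Parts (i) and (iii) are fine modulo this: the stopping-time selection plus a covering lemma gives (i), and for (iii) minimality of $R_j$ keeps $K_{6B_j,R_j}$ bounded so that $f\oz_j-\vz_j$ is a $(p,1)_\lz$-atomic block with the right normalization -- but that normalization ultimately depends on the recursive construction you are missing.
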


Recall that the \emph{sharp maximal operator} $M^\#$ in \cite{bd}
is defined by setting, for all $f\in L^1_{\loc}(\mu)$ and $x\in\cx$,
$$M^{\#}f(x):=\sup_{B\ni x}\frac{1}{\mu(6B)}\int_{B}|f(x)-m_{\wz{B}}f|\,d\mu(x)
+\sup_{(Q,R)\in
\bdz_{x}}\frac{|m_{Q}f-m_{R}f|}{K_{Q,R}},$$
where $\bdz_{x}:= \{(Q,R):\ \,x\in Q\st R\ {\rm and}\
Q,\,R\,\ {\rm are\ doubling\ balls}\}$.

\begin{theorem}\label{t2.7}
Let $T$ be a bounded sublinear operator from $L^{p_{0}}(\mu)$ into
$\rbmo$ and from $H^{1}(\mu)$ into
$L^{p_{0}',\fz}(\mu)$, where $p_{0}\in (1,\fz]$ and $1/p_0+1/p_0'=1$. Then $T$ extends
to a bounded linear operator from $L^{p}(\mu)$ into $L^{q}(\mu)$, where $p\in (1,p_{0})$ and
$1/q=1/p-1/p_{0}$.
\end{theorem}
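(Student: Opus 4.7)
The idea is to combine the Calder\'on--Zygmund decomposition of Lemma \ref{l2.6} with a Marcinkiewicz-type interpolation argument, passing through the sharp maximal operator $M^{\#}$. The two endpoint hypotheses reformulate cleanly in terms of $M^{\#}$: from $T:\lpz\to\rbmo$ and the equivalent characterization of $\rbmo$ in Lemma \ref{l2.2} one has
$\|M^{\#}(Tf)\|_{\li}\ls\|f\|_\lpz$,
and from $T:\hon\to L^{p_0',\fz}(\mu)$ together with the pointwise comparison $M^{\#}g\ls Ng$ (which follows from the definition of $M^{\#}$ because $K_{Q,R}\ge1$ and $m_{\wz B}|g|\le Ng$) and the $L^{p_0',\fz}$-boundedness of $N$ (Lemma \ref{l2.5} together with Marcinkiewicz interpolation) one obtains
$\|M^{\#}(Tf)\|_{L^{p_0',\fz}(\mu)}\ls\|f\|_{\hon}$.
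Thus $f\mapsto M^{\#}(Tf)$ has good bounds at both endpoints.

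Fix $p\in(1,p_0)$, $f\in\lp$ with $\|f\|_\lp=1$, and $\lz\in(0,\fz)$. Applying Lemma \ref{l2.6} at a level $t=t(\lz)$ to be chosen, write $f=g+h$ with $\|g\|_{\li}\ls t$, $\|g\|_\lp\ls1$ and $\|h\|_{\hon}\ls t^{1-p}$; log-convexity of $L^s$-norms then gives $\|g\|_\lpz\ls t^{1-p/p_0}$. By sublinearity of $M^{\#}\circ T$,
$$\mu\lf(\lf\{x\in\cx:\,M^{\#}(Tf)(x)>2\lz\r\}\r)
\le\mu\lf(\lf\{M^{\#}(Tg)>\lz\r\}\r)+\mu\lf(\lf\{M^{\#}(Th)>\lz\r\}\r).$$
Select $t\sim\lz^{p_0/(p_0-p)}$ so that $\|M^{\#}(Tg)\|_\li\ls\|g\|_\lpz\le\lz$, killing the first term; the second is then controlled by
$(\|M^{\#}(Th)\|_{L^{p_0',\fz}(\mu)}/\lz)^{p_0'}\ls(t^{1-p}/\lz)^{p_0'}$, and a direct computation using $(p_0-1)p_0'=p_0$ converts the exponents into $\lz^{-q}$. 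Hence $f\mapsto M^{\#}(Tf)$ is of weak type $(p,q)$ for every $p\in(1,p_0)$.

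For any fixed $p\in(1,p_0)$, pick $p_1<p<p_2$ both in $(1,p_0)$ with $1/q_i=1/p_i-1/p_0$, and apply Marcinkiewicz interpolation to the sublinear map $f\mapsto M^{\#}(Tf)$ to upgrade the two weak bounds to the strong estimate $\|M^{\#}(Tf)\|_\lq\ls\|f\|_\lp$. Combined with a Fefferman--Stein-type inequality $\|Nf\|_\lq\ls\|M^{\#}f\|_\lq$ in the non-homogeneous setting and $|Tf|\le N(Tf)$ $\mu$-a.e.\ (Lemma \ref{l2.5}(ii)), this yields $\|Tf\|_\lq\ls\|f\|_\lp$. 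The main obstacle will be justifying this last step: the Fefferman--Stein inequality is vacuous unless $N(Tf)\in\lq$ a priori, so we must first prove the estimate on a dense subclass (say, $\lp\cap L^{p_0',\fz}(\mu)$ with bounded support, where the $H^1$-endpoint provides a priori integrability of $Tf$), and then extend by density to all of $\lp$. A secondary technicality is verifying the pointwise domination $M^{\#}g\ls Ng$ and the $L^{p_0',\fz}$-boundedness of $N$ in the present geometrically doubling/upper doubling setting; both rest on the properties of $K_{Q,R}$ and the maximal operators in Lemma \ref{l2.5}, and it is precisely these delicate quantitative choices that repair the gap in \cite[Lemma 2.3]{hmy4}.
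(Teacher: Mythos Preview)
Your approach is essentially the paper's: Calder\'on--Zygmund split $f=g+h$ at a level calibrated to $\lz$, use the $\lpz\to\rbmo$ bound to kill the sharp-maximal contribution of $Tg$ in $L^\infty$, use the $\hon\to L^{p_0',\fz}$ bound on $Th$, and recover $Tf$ from its sharp maximal function. Two refinements in the paper are worth noting. First, the paper works with the $r$-th power variants $N_r:=[N(|\cdot|^r)]^{1/r}$ and $M_r^{\#}$ for $r\in(0,1)$ and invokes a \emph{weak-type} good-$\lz$ comparison (Lemma~3.2 of \cite{ly2}) \emph{inside} the distribution-function estimate, rather than proving strong bounds for $M^{\#}(Tf)$ first and then applying a strong Fefferman--Stein inequality at the end; this is exactly what makes the a~priori hypothesis manageable. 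Second, the dense class is $L^\infty_{b,0}(\mu)$ (bounded, bounded support, \emph{mean zero})---mean zero is needed so that such $f$ lie in $\hon$, which is what gives $Tg\in L^{p_0',\fz}(\mu)$ a~priori via \eqref{2.14}; your suggested class ``$L^p\cap L^{p_0',\fz}$ with bounded support'' does not sit inside $\hon$. A minor correction: the pointwise bound should be $M^{\#}g\ls M_{(6)}g+Ng$ rather than $M^{\#}g\ls Ng$ (the oscillation term involves the non-centered maximal function over all balls, not only doubling ones), though both pieces are controlled in $L^{p_0',\fz}(\mu)$ when $p_0<\fz$. The case $\mu(\cx)<\fz$ is treated separately in the paper.
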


\begin{proof}
By the Marcinkiewicz interpolation theorem, it suffices to prove that
\begin{equation}\label{2.10}
\mu(\{x\in \cx:\ |Tf(x)|> t\})\ls[ t^{-1}\|f\|_{L^p{(\mu)}}]^q
\end{equation}
for all $p\in(1,p_0)$ and $1/q=1/p-1/p_0$. We consider the following two cases.

Case i) $\mu(\cx)=\fz$. Let $L_{b,0}^\fz(\mu)
:=\{f\in L_b^\fz(\mu):\ \int_\cx f(x)\,d\mu(x)=0\}$.
Then, by a standard argument, we know that $L_{b,0}^\fz(\mu)$ is dense in $L^p(\mu)$
for all $p\in(1,p_0)$. Let $r\in (0,1)$. Define $N_r(g):=\lf[N(|g|^r)\r]^{1/r}$
for all $g\in L_{\loc}^r(\mu)$. By Lemma \ref{l2.5}(ii)
and a standard density argument,
to prove \eqref{2.10}, it suffices to
prove that, for any $f\in L_{b,0}^\fz(\mu)$, $p\in(1,p_0)$ and $1/q=1/p-1/p_0$,
\begin{equation}\label{2.11}
\sup_{ t\in(0,\fz)} t^q\mu(\{x\in \cx:\ |N_r(Tf)(x)|> t\})\ls\|f\|_{\lp}^q.
\end{equation}
To this end, for any given $f\in L^\fz_{b,0}(\mu)$, applying Lemma
\ref{l2.6} to $f$ with $ t$ replaced by $ t^{q/p}$, and letting $R_j$ be as in Lemma \ref{l2.6}(iii),
we see that
$f=g+h$, where $g:=f\chi_{\cx\setminus\cup_j6B_j}+\sum_j\vz_j$ and
$h:=\sum_j(\oz_jf-\vz_j)$.
By Minkowski's inequality, H\"older's inequality and $1/q=1/p-1/p_0$, together with
\eqref{2.4}, \eqref{2.6} and \eqref{2.8} with $t$ replaced by $t^{q/p}$, we conclude that
\begin{eqnarray}\label{2.12}
\|g\|_{L^{p_0}(\mu)}&&\le\lf\|f\chi_{\cx\setminus\cup_j6B_j}\r\|_{L^{p_0}(\mu)}
+\lf\|\sum_j\vz_j\r\|_{L^{p_0}(\mu)}\\
&&\noz\ls t^{q(\frac1p-\frac1{p_0})}\|f\|_{\lp}^{p/p_0}+ t^{(q/p)/p'_0}
\lf[\sum_j\|\vz_j\|_{\lon}\r]^{1/p_0}\\
&&\noz\ls t\|f\|_{\lp}^{p/p_0}+
 t^{(q/p)/p'_0}\lf[\sum_j\|\vz_j\|_\lp[\mu(R_j)]^{1/p'}\r]^{1/p_0}\\
&&\ls t\|f\|_{\lp}^{p/p_0}+ t^{(q/p)/p'_0} t^{-q/(p'p_0)}
\lf[\sum_j\int_\cx|\oz_j(x)f(x)|^p\,d\mu(x)\r]^{1/p_0}\noz\\
&&\ls t\|f\|_{\lp}^{p/p_0}.\noz
\end{eqnarray}
For each $r\in (0,1)$, define $M_r^\#g:=\lf\{M^\#(|g|^r)\r\}^{1/r}$.
Then, from \cite[Lemma 3.1]{ly2}, together with the boundedness of $T$ from $L^{p_0}(\mu)$
into $\rbmo$ and \eqref{2.12}, we deduce that
$$\|M_r^\#Tg\|_{\li}\ls\|Tg\|_{\rbmo}\ls\|g\|_{L^{p_0}(\mu)}\ls t\|f\|_{\lp}^{p/p_0}.$$
Hence, if $C_0$ is chosen to be a sufficiently large positive constant, we then see that
\begin{equation}\label{2.13}
\mu\lf(\lf\{x\in\cx:\ M_r^\#(Tg)(x)>C_0 t\|f\|_{\lp}^{p/p_0}\r\}\r)=0.
\end{equation}
On the other hand, since both $f$ and $h$ belong to $\hon$, by \eqref{2.9} with $t$
replaced by $t^{q/p}$, we conclude that $g\in \hon$ and
$$\|g\|_{\hon}\le\|f\|_{\hon}+\|h\|_{\hon}\ls\|f\|_{\hon}
+\frac1{ t^{(p-1)q/p}}\|f\|_{\lp}^p.$$
From this, together with the boundedness of $T$ from $\hon$ into $L^{p'_0,\fz}(\mu)$ and
\cite[Lemma 3.3]{ly2}, we deduce that, for any $q$ satisfying $1/q=1/p-1/p_0$ and
$R\in(0,\fz)$,
\begin{eqnarray}\label{2.14}
&&\sup_{ t\in(0,R)} t^q\mu\lf(\{x\in\cx:\ N_r(Tg)(x)> t\}\r)\\
&&\hs\ls\sup_{ t\in(0,R)} t^{q-p'_0}\sup_{\tau\in[ t,\fz)}\tau^{p'_0}
\mu\lf(\{x\in\cx:\ |Tg(x)|>\tau\}\r)\noz\\
&&\hs\ls R^{q-p_0}\|Tg\|_{L^{p'_0,\fz}(\mu)}\ls R^{q-p_0}\|g\|_{\hon}<\fz.\noz
\end{eqnarray}
From the fact that $N_r\circ T$ is quasi-linear, \eqref{2.14}, \cite[Lemma 3.2]{ly2}
and \eqref{2.13}, we deduce that there exists a positive constant $\wz C$ such that,
for all $f\in L^\fz_{b,0}(\mu)$,
\begin{eqnarray}\label{2.15}
&&\sup_{ t\in(0,\fz)} t^q
\mu\lf(\lf\{x\in\cx:\ N_r(Tf)(x)>\wz{C}C_0 t\|f\|_{\lp}^{p/p_0}\r\}\r)\\
&&\noz\hs\ls\sup_{ t\in(0,\fz)} t^q
\mu\lf(\lf\{x\in\cx:\ N_r(Tg)(x)>C_0 t\|f\|_{\lp}^{p/p_0}\r\}\r)\\
&&\noz\hs\hs+\sup_{ t\in(0,\fz)} t^q
\mu\lf(\lf\{x\in\cx:\ N_r(Th)(x)>C_0 t\|f\|_{\lp}^{p/p_0}\r\}\r)\\
&&\noz\hs\ls\sup_{ t\in(0,\fz)} t^q
\mu\lf(\lf\{x\in\cx:\ M_r^\#(Tg)(x)>C_0 t\|f\|_{\lp}^{p/p_0}\r\}\r)\\
&&\noz\hs\hs+\sup_{ t\in(0,\fz)} t^q
\mu\lf(\lf\{x\in\cx:\ N_r(Th)(x)>C_0 t\|f\|_{\lp}^{p/p_0}\r\}\r)\\
&&\noz\hs\sim\sup_{ t\in(0,\fz)} t^q
\mu\lf(\lf\{x\in\cx:\ N_r(Th)(x)> t\|f\|_{\lp}^{p/p_0}\r\}\r).
\end{eqnarray}
By the boundedness of $N$ from $\lon$ into $L^{1,\fz}(\mu)$ (see Lemma \ref{l2.5}(i)),
the layer cake representation, the boundedness of
$T$ from $\hon$ into $L^{p'_0,\fz}(\mu)$ and \eqref{2.9}
with $t$ replaced by $t^{q/p}$, we conclude that
\begin{eqnarray}\label{2.16}
&&\mu\lf(\lf\{x\in\cx:\ N_r(Th)(x)> t\|f\|_{\lp}^{p/p_0}\r\}\r)\\
&&\noz\hs=\mu\lf(\lf\{x\in\cx:\ N(|Th|^r)(x)> t^r\|f\|_{\lp}^{rp/p_0}\r\}\r)\\
&&\noz\hs\le\mu\lf(\lf\{x\in\cx:\ N(|Th|^r\chi_{\{y\in\cx:\
|Th(y)|>2^{-1/r} t\|f\|_{\lp}^{p/p_0}\}})(x)>\frac{ t^r}{2}
\|f\|_{\lp}^{rp/p_0}\r\}\r)\\
&&\noz\hs\ls t^{-r}\|f\|_\lp^{-rp/p_0}\int_\cx|Th(x)|^r\chi_{\lf\{x\in\cx:\
|Th(x)|>2^{-1/r} t\|f\|_{\lp}^{p/p_0}\r\}}(x)\,d\mu(x)\\
&&\noz\hs\sim  t^{-r}\|f\|_\lp^{-rp/p_0}\lf[\int_0^{2^{-1/r} t\|f\|_\lp^{p/p_0}}
s^{r-1}\r.\\
&&\noz\hs\hs\times\mu\lf(\lf\{x\in\cx:\ |Th(x)|>2^{-1/r} t\|f\|_{\lp}^{p/p_0}\r\}\r)\,ds\\
&&\noz\hs\hs\lf.+\int_{2^{-1/r} t\|f\|_\lp^{p/p_0}}^\fz
s^{r-1}\mu\lf(\lf\{x\in\cx:\ |Th(x)|>s\r\}\r)\,ds\r]\\
&&\noz\hs\ls\mu\lf(\lf\{x\in\cx:\ |Th(x)|>2^{-1/r} t\|f\|_{\lp}^{p/p_0}\r\}\r)\\
&&\noz\hs\hs+\lf[ t\|f\|_{\lp}^{p/p_0}\r]^{-p'_0}\sup_{s\in(0,\fz)}s^{p'_0}
\mu\lf(\lf\{x\in\cx:\ |Th(x)|>s\r\}\r)\\
&&\noz\hs
\ls \|h\|_{\hon}^{p_0'}\lf[ t\|f\|_{\lp}^{p/p_0}\r]^{-p'_0}\ls t^{-q}
\|f\|_{\lp}^p,
\end{eqnarray}
which, together with \eqref{2.15}, completes the proof of \eqref{2.11}.

Case ii) $\mu(\cx)<\fz$. In this case, assume that $f\in L_b^\fz(\mu)$. Notice that, if $ t\in(0, t_0]$, where $ t_0^q:=\bz_6\|f\|_{\lp}^q/\mu(\cx)$, then \eqref{2.10}
holds true trivially. Thus, we only need to consider the case when $ t\in( t_0,\fz)$.
Let $N_r$ and $M_r$ be as in Case i). For each $ t\in( t_0,\fz)$, applying Lemma \ref{l2.6} to $f$
with $ t$ replaced by $ t^{q/p}$, we then see that
$f=g+h$ with $g$ and $h$ as in Case i), which, together
with the boundedness of $T$ from $L^{p_0}(\mu)$ into $\rbmo$ and \cite[Lemma 3.1]{ly2},
shows that \eqref{2.13} still holds true for $M_r^\#(Tg)$.

We now claim that, for any $r\in(0,1)$,
\begin{equation}\label{2.17}
F:=\frac1{\mu(\cx)}\int_\cx|Tg(x)|^r\,d\mu(x)\ls t^r\|f\|_{\lp}^{rp/p_0},
\end{equation}
where the implicit positive constant only depends on $\mu(\cx)$ and $r$. To see this,
since $\mu(\cx)<\fz$, we may regard $\cx$ as a ball, then $g_0:=g-\frac1{\mu(\cx)}
\int_\cx g(x)\,d\mu(x)\in\hon$. Precisely, by \eqref{2.12}, we see that
\begin{equation}\label{2.18}
\|g_0\|_{\hon}\ls t\|f\|_{\lp}^{p/p_0}.
\end{equation}
On the other hand, by H\"older's inequality, the fact that $T1\in\rbmo$ and
the locally integrability of $\rbmo$ functions, we conclude that
$$\int_\cx|T1(x)|^r\,d\mu(x)\le\lf[\int_\cx|T1(x)|\,d\mu(x)\r]^r[\mu(\cx)]^{1-r}<\fz.$$
From this and the layer cake representation, together with $r\in (0,1)$,
H\"older's inequality, \eqref{2.12}, the boundedness of $T$
from $\hon$ into $L^{p'_0,\fz}(\mu)$ and \eqref{2.18}, we deduce that
\begin{eqnarray*}
&&\int_\cx|Tg(x)|^r\,d\mu(x)\\
&&\hs\le\int_\cx\lf\{|Tg_0(x)|^r
+\lf|\frac1{\mu(\cx)}\int_\cx g(y)\,d\mu(y)\r|^r|T1(x)|^r\r\}\,d\mu(x)\\
&&\hs\ls \int_0^{\|g_0\|_{\hon}/\mu(\cx)}t^{r-1}\mu(\{x\in\cx:\ |Tg_0(x)|>t\})\,dt
+\int_{\|g_0\|_{\hon}/\mu(\cx)}^\fz\cdots
+\|g\|_{L^{p_0}(\mu)}^r\\
&&\hs\ls\int_0^{\|g_0\|_{\hon}/\mu(\cx)}t^{r-1}\,dt
+\|g_0\|_{\hon}^{p'_0}\int_{\|g_0\|_{\hon}/\mu(\cx)}^\fz t^{r-1-p'_0}\,dt
+ t^r\|f\|_{\lp}^{rp/p_0}\\
&&\hs\ls\|g_0\|^r_{\hon}
+ t^r\|f\|_{\lp}^{rp/p_0}\ls t^r\|f\|_{\lp}^{rp/p_0},
\end{eqnarray*}
which implies \eqref{2.17}.

Observe that $\int_\cx[|Tg(x)|^r-F]\,d\mu(x)=0$ and, for any $R\in(0,\fz)$,
$$\sup_{ t\in(0,R)} t^q\mu(\{x\in\cx:\ N(|Tg|^r-F)(x)> t\})\le R^q\mu(\cx)<\fz.$$
From this and \eqref{2.17}, together with \cite[Lemma 3.2]{ly2},
$M_r^\#(F)=0$, \eqref{2.13} and some arguments similar to those used in the
 estimates for \eqref{2.15} and \eqref{2.16}, we deduce
that there exists a positive constant $\wz c$ such that
\begin{eqnarray*}
&&\sup_{ t\in( t_0,\fz)} t^q
\mu\lf(\lf\{x\in\cx:\ N_r(Tf)(x)>\wz{c}C_0 t\|f\|_{\lp}^{p/p_0}\r\}\r)\\
&&\hs\ls\sup_{ t\in( t_0,\fz)} t^q
\mu\lf(\lf\{x\in\cx:\ N(|Tg|^r-F)(x)>(C_0 t)^r\|f\|_{\lp}^{rp/p_0}\r\}\r)\\
&&\hs\hs+\sup_{ t\in( t_0,\fz)} t^q
\mu\lf(\lf\{x\in\cx:\ N_r(Th)(x)>C_0 t\|f\|_{\lp}^{p/p_0}\r\}\r)\\
&&\hs\ls\sup_{ t\in(0,\fz)} t^q
\mu\lf(\lf\{x\in\cx:\ M_r^\#(Tg)(x)>C_0 t\|f\|_{\lp}^{p/p_0}\r\}\r)\\
&&\hs\hs+\sup_{ t\in(0,\fz)} t^q
\mu\lf(\lf\{x\in\cx:\ N_r(Th)(x)>C_0 t\|f\|_{\lp}^{p/p_0}\r\}\r)\\
&&\hs\sim\sup_{ t\in(0,\fz)} t^q
\mu\lf(\lf\{x\in\cx:\ N_r(Th)(x)> t\|f\|_{\lp}^{p/p_0}\r\}\r)\ls t^{-q}
\|f\|_{\lp}^p,
\end{eqnarray*}
where $C_0$ is chosen to be a sufficiently large positive constant, which completes the proof of Theorem \ref{t2.7}.
\end{proof}

\begin{proof}[Proof of Theorem \ref{t1.13}]
(I)$\Rightarrow$(II) Let $f\in L^1(\mu)$. Without loss of generality, we may assume that $\|f\|_{L^1(\mu)}=1$.
We denote $1/(1-\az)$ by $q_0$.  Applying
Lemma \ref{l2.6} to $f$ with $p=1$ and $ t$ replaced by $ t^{q_0}$, and letting $R_j$ be as in Lemma \ref{l2.6}(iii),
we see that
$f=g+h$, where $g:=f\chi_{\cx\setminus(\cup_j6B_j)}+\sum_j\vz_j$ and
$h:=\sum_j(\oz_jf-\vz_j)$.
By \eqref{2.7} and the assumption $\|f\|_{L^1(\mu)}=1$, we easily see that
\begin{eqnarray}\label{2.19}
\|g\|_{\lon}\ls\|f\|_{\lon}\sim 1.
\end{eqnarray}
From \eqref{2.4} and \eqref{2.6} with $t$ replaced by $t^{q_0}$, it follows that, for $\mu$-almost every $x\in \cx$,
\begin{equation}\label{2.20}
|g(x)|\ls  t^{q_0}.
\end{equation}
Since $T_{\az}$ is bounded from $\lpo$ into $\lqo$ for any $p_1\in (1,1/\az)$
and $1/q_1=1/p_1-\az$, by \eqref{2.20} and \eqref{2.19}, we conclude that
\begin{eqnarray}\label{2.21}
\mu(\{x\in \cx:|T_{\az}g(x)|> t\})&&\ls t^{-q_1}\|T_\az g\|_{\lqo}^{q_1}
\ls t^{-q_1}\|g\|_{\lpo}^{q_1}\\
&&\noz\ls t^{-q_1}( t^{q_0})^{(p_1-1)q_1/p_1}\ls t^{-q_0}.
\end{eqnarray}

On the other hand, from \eqref{2.3} with $p=1$ and $t$ replaced by $t^{q_0}$,
and the fact that $\{B_j\}_j$ is a sequence of pairwise disjoint balls, we deduce that
\begin{equation}\label{2.22}
\mu(\cup_j6^2B_j)\ls  t^{-q_0}\int_\cx|f(y)|\,d\mu(y)
\ls  t^{-q_0}.
\end{equation}
Therefore, to show (II), by $f=g+h$, \eqref{2.21} and \eqref{2.22}, it suffices
to prove that
\begin{equation}\label{2.23}
\mu\lf(\lf\{x\in \cx\setminus (\cup_j6^2B_j):\ |T_{\az}h(x)|> t\r\}\r)\ls  t^{-q_0}.
\end{equation}
To this end, denote the center of $B_j$ by $x_j$, and let $N_1$ be the positive integer
satisfying $R_j=(3\times 6^2)^{N_1}B_j$.
Let $\tz$ be a bounded function with $\|\tz\|_{L^{q_0'}(\mu)}\le 1$ whose support
is contained in $\cx\setminus(\cup_j6^2B_j)$. Then
\begin{eqnarray*}
\int_{\cx\setminus(\cup_j6^2B_j)}|T_{\az}h(x)\tz(x)|\,d\mu(x)&&
\le\sum_j\int_{\cx\setminus 6R_j}|T_{\az}h_j(x)\tz(x)|\,d\mu(x)
+\sum_j\int_{6R_j\setminus 6^2B_j}\cdots\\
&&=:\mathrm{F_1}+\mathrm{F_2},
\end{eqnarray*}
where $h_j:=\oz_jf-\vz_j$. By \eqref{2.5}, we see that
$\int_\cx h_j(x)\,d\mu(x)=0$, which, together with \eqref{1.8}, H\"older's inequality
and \eqref{2.7}, further implies that
\begin{eqnarray*}
\mathrm{F_1}&&\le \sum_j\int_{\cx\setminus 6R_j}\int_\cx|\tz(x)||K_{\az}(x,y)-K_{\az}(x,x_j)|
|h_j(y)|\,d\mu(y)\,d\mu(x)\\
&&\ls \sum_j\int_\cx\lf[\sum_{i=1}^{\fz}
\int_{6^{i+1}B_j\setminus6^iB_j}\frac{r_{B_j}^{\dz}}
{(6^ir_{B_j})^{\dz}[\lz(x_j,6^ir_{B_j})]^{1-\az}}|\tz(x)|\,d\mu(x)\r]
|h_j(y)|\,d\mu(y)\\
&&\ls \sum_j\int_\cx|f(y)\oz_j(y)|\,d\mu(y)\sum_{i=1}^{\fz}
6^{-i\dz}\|\tz\|_{L^{q_0'}(\mu)}\ls 1.
\end{eqnarray*}
For ${\rm F}_2$, by $h_j:=\oz_jf-\vz_j$, \eqref{1.7}, H\"older's inequality
and an argument similar to that used in the proof of
\cite[Lemma 3.5(iii)]{fyy}, together with the boundedness
of $T_\az$ from $L^{p_2}(\mu)$ into $L^{q_2}(\mu)$ with $p_2\in(1,1/\az)$ and
$1/q_2=1/p_2-\az$, we have
\begin{eqnarray*}
\mathrm{F_2}
&&\le \sum_j\int_{6R_j\setminus 6^2B_j}|\tz(x)||T_{\az}(\oz_jf)(x)|\,d\mu(x)
+\sum_j\int_{6R_j}|\tz(x)||T_{\az}\vz_j(x)|\,d\mu(x)\\
&&\ls \sum_j\int_{6R_j\setminus6^2B_j}\frac{|\tz(x)|}{[\lz(x_j,d(x,x_j))]^{1-\az}}\,d\mu(x)
\int_\cx|f(y)\oz_j(y)|\,d\mu(y)\\
&&\hs+\sum_j\lf[\int_{6R_j}|T_{\az}\vz_j(x)|^{q_0}\,d\mu(x)\r]^{1/q_0}
\|\tz\|_{L^{q_0'}(\mu)}\\
&&\ls \sum_j\int_\cx|f(y)\oz_j(y)|\,d\mu(y)
\lf[\sum_{k=1}^{N_1+1}\frac{\mu((3\times6^2)^kB_j)}
{\lz(x_j,(3\times6^2)^kr_{B_j})}\r]^{1/q_0}\|\tz\|_{L^{q_0'}(\mu)}\\
&&\hs\hs
+\sum_j\lf[\int_{6R_j}|T_{\az}\vz_j(x)|^{q_2}\,d\mu(x)\r]^{1/q_2}
\lf[\mu(6R_j)\r]^{1/q_0-1/q_2}\ls 1,
\end{eqnarray*}
where we chose $p_2$ and $q_2$ such that $p_2\in (1,1/\az)$ and $1/q_2=1/p_2-\az$.
The estimates for $\mathrm{F_1}$ and $\mathrm{F_2}$ give \eqref{2.23}, and hence
complete the proof of (I)$\Rightarrow$(II).

(II)$\Rightarrow$(III) Indeed, for any $f\in L^{1/\az}(\mu)$, to show $T_\az f\in\rbmo$,
by the assumption that $T_\az f$ is finite almost everywhere, it suffices to show that,
for any ball $Q$ and
$h_{Q}=m_{Q}(T_{\az}(f\chi_{\cx\setminus (6/5)Q}))$,
\begin{equation}\label{2.24}
\frac{1}{\mu(6Q)}\int_{Q}|T_{\az}f(x)-h_{Q}|\,d\mu(x) \ls
\|f\|_{L^{1/\az}(\mu)}
\end{equation}
and, for any two balls $Q\st R$, where $R$ is doubling,
\begin{equation}\label{2.25}
 |h_{Q}-h_{R}|\ls K_{Q,R}\|f\|_{L^{1/\az}(\mu)}.
\end{equation}

Now we first show \eqref{2.24}. Write
\begin{eqnarray*}
\frac{1}{\mu(6Q)}\int_{Q}|T_{\az}f(x)-h_{Q}|\,d\mu(x)&& \le
\frac{1}{\mu(6Q)}\int_{Q}|T_{\az}(f\chi_{(6/5)Q})(x)|\,d\mu(x)\\
&&\hs+\frac{1}{\mu(6Q)}\int_{Q}|T_{\az}(f\chi_{\cx\setminus
(6/5)Q})(x)-h_{Q}|\,d\mu(x)
=:\mathrm{H}+\mathrm{I}.
\end{eqnarray*}

Notice that Kolmogorov's inequality (see, for example, \cite[p.\,485,\ Lemma 2.8]{gr})
also holds true in the non-homogeneous setting. By Kolmogorov's inequality, namely, for
$0<p<q<\fz$ and any function $f$,
$$\|f\|_{L^{q,\fz}(\mu)}
\le\sup_E\|f\chi_E\|_{\lp}/\|\chi_E\|_{L^s(\mu)}\ls\|f\|_{L^{q,\fz}(\mu)},$$
where $1/s=1/p-1/q$ and the supremum is taken over all measurable sets $E$ with
$0<\mu(E)<\fz$, together with (II) of Theorem \ref{t1.13} and H\"older's inequality,
we know that
\begin{equation*}
\mathrm{H}\ls
\frac{1}{\mu(6Q)}\|\chi_{Q}\|_{L^{1/\az}(\mu)}
\|T_{\az}(f\chi_{(6/5)Q})\|_{L^{q_0,\fz}(\mu)}
\ls\frac{[\mu(Q)]^\az}{\mu(6Q)}\|f\chi_{(6/5)Q}\|_{\lon}
\ls\|f\|_{L^{1/\az}(\mu)}.
\end{equation*}
To estimate I, we write
\begin{eqnarray*}
&&\lf|T_{\az}(f\chi_{\cx\setminus
(6/5)Q})(x)-T_{\az}(f\chi_{\cx\setminus (6/5)Q})(y)\r|\\
&&\hs\le\int_{6Q\setminus (6/5)Q}|K_{\az}(x,z)-K_{\az}(y,z)||f(z)|\,d\mu(z)\\
&&\hs=\int_{\cx\setminus 6Q}|K_{\az}(x,z)-K_{\az}(y,z)||f(z)|\,d\mu(z)
+\int_{\cx\setminus (6/5)Q}\cdots=:{\rm I}_1+{\rm I}_2.
\end{eqnarray*}
Let $c_Q$ and $r_Q$ be the center and the radius of $Q$, respectively.
To estimate ${\rm I}_1$, from \eqref{1.7} and H\"older's inequality, together with
\eqref{1.2} and \eqref{1.3}, it follows that
\begin{eqnarray*}
{\rm I}_1&&\ls\int_{6Q\setminus (6/5)Q}\lf(\frac1{[\lz(x,d(x,z))]^{1-\az}}
+\frac1{[\lz(y,d(y,z))]^{1-\az}}\r)|f(z)|\,d\mu(z)\\
&&\ls\frac1{[\lz(c_Q, r_Q)]^{1-\az}}\int_{6Q}|f(z)|\,d\mu(z)\ls\|f\|_{L^{1/\az}(\mu)}.
\end{eqnarray*}

To estimate $\mathrm{I}_2$, by \eqref{1.8}, \eqref{1.2}, H\"older's inequality and \eqref{1.3},
we see that, for any $x,\,y\in Q$,
\begin{eqnarray*}
{\rm I}_2
&&\ls \sum_{i=1}^{\fz}\int_{2^{i}(6Q)\setminus
2^{i-1}(6Q)}\frac{[d(x,y)]^{\dz}}{[d(z,y)]^{\dz}[\lz(y,d(z,y))]^{1-\az}}
|f(z)|\,d\mu(z)\\
&&\ls\sum_{i=1}^{\fz}\int_{2^{i}(6Q)\setminus
2^{i-1}(6Q)}\frac{r_Q^{\dz}}{[2^{(i-1)}(6r_Q)]^{\dz}¡¢
[\lz(y,2^{(i-1)}6r_Q)]^{1-\az}}
|f(z)|\,d\mu(z)\\
&&\ls\sum_{i=1}^{\fz}2^{-(i-1)\dz}
\lf[\frac{\mu(2^i (6Q))}{\lz(c_Q,2^i(6r_Q))}\r]^{1-\az}\|f\|_{L^{1/\az}(\mu)}
\ls\|f\|_{L^{1/\az}(\mu)}.
\end{eqnarray*}
Therefore, $\mathrm{I}\ls \|f\|_{L^{1/\az}(\mu)}$.

Combining the estimates for $\mathrm{H}$ and $\mathrm{I}$, we obtain \eqref{2.24}.

Now we show \eqref{2.25} for the chosen $\{h_{Q}\}_{Q}$. Denote
$N_{Q,R}+1$ simply by $N_2$. Write
\begin{eqnarray*}
|h_{Q}-h_{R}|&&=|m_{Q}(T_{\az}(f\chi_{\cx\setminus
(6/5)Q}))-m_{R}(T_{\az}(f\chi_{\cx\setminus (6/5)R}))|\\
&&\le |m_{Q}(T_{\az}(f\chi_{6Q\setminus
(6/5)Q}))|+|m_{Q}(T_{\az}(f\chi_{6^{N_2}Q\setminus 6Q}))|\\
&&\hs+|m_{Q}(T_{\az}(f\chi_{\cx\setminus
6^{N_2}Q}))-m_{R}(T_{\az}(f\chi_{\cx\setminus
6^{N_2}Q}))|+|m_{R}(T_{\az}(f\chi_{6^{N_2}Q\setminus (6/5)R}))|\\
&&=:\mathrm{J_{1}}+\mathrm{J_{2}}+\mathrm{J_{3}}+\mathrm{J_{4}}.
\end{eqnarray*}

An argument similar to that used in the estimate for
$\mathrm{H}$ shows that $\mathrm{J_{4}}\ls
\|f\|_{L^{1/\az}(\mu)}$. Also, an argument similar to
that used in the estimate for
$\mathrm{I}$ gives us that $\mathrm{J_{3}}\ls
\|f\|_{L^{1/\az}(\mu)}$.

Next we estimate $\mathrm{J_{2}}$.
For any $x\in Q$, by H\"older's inequality, the fact that $6^{N_2}Q\st72R$ and
(ii) and (iv) of Lemma \ref{l2.1}, we have
\begin{eqnarray*}
\lf|T_{\az}(f\chi_{6^{N_2}Q\setminus 6Q})(x)\r|&&\le \lf[
\int_{6^{N_2}Q\setminus 6Q}\frac{1}{\lz(x,d(x,z))}\,d\mu(z)\r]^{1-\az}
\|f\|_{L^{1/\az}(\mu)}\\
&&\ls K_{Q,36R}\|f\|_{L^{1/\az}(\mu)}\ls K_{Q,R}\|f\|_{L^{1/\az}(\mu)}.
\end{eqnarray*}
This implies that $\mathrm{J_{2}}\ls
K_{Q,R}\|f\|_{L^{1/\az}(\mu)}$. Similarly, we have
$${\rm J_1}\ls K_{Q,6Q}\|f\|_{L^{1/\az}(\mu)}\ls K_{Q,R}\|f\|_{L^{1/\az}(\mu)}.$$

Combining the estimates for
$\mathrm{J_{1}}$, $\mathrm{J_{2}}$, $\mathrm{J_{3}}$ and
$\mathrm{J_{4}}$, we obtain \eqref{2.25} and hence complete the
proof of (II)$\Rightarrow$(III).

(III)$\Rightarrow$(IV) We first show that, for any ball $B$, bounded
function $a$
supported on $B$ and $q_0:=1/(1-\az)$,
\begin{equation}\label{2.26}
\int_B|T_\az a(x)|^{q_0}\,d\mu(x)\ls[\mu(2B)]^{q_0}\|a\|_{\li}^{q_0}.
\end{equation}

To prove this, we borrow some ideas from the proof of \cite[Lemma 3.1]{lyy} by
considering the following two cases for $r_B$.

Case (i) $r_B\le {\rm diam}(\supp\mu)/40$, where ${\rm diam}(\supp\mu)$ denotes
the \emph{diameter of the set $\supp\mu$}.
By Corollary \ref{c2.3} and (III) of Theorem \ref{t1.13}, we have
\begin{equation}\label{2.27}
\int_B|T_\az a(x)-m_{\wz B}(T_\az a)|^{q_0}\,d\mu(x)
\ls\mu(2B)\|a\|_{L^{1/\az}(\mu)}^{q_0}
\ls[\mu(2B)]^{q_0}\|a\|_{\li}^{q_0}.
\end{equation}
Thus, by \eqref{2.27}, to prove \eqref{2.26}, it suffices to show that
\begin{equation}\label{2.28}
|m_{\wz B}(T_\az a)|\ls[\mu(2B)]^\az\|a\|_{\li}.
\end{equation}

We first claim that there exists $j_0\in\nn$
such that
\begin{equation}\label{2.29}
\mu(6^{j_0}B\setminus 2B)>0.
\end{equation}
Indeed, if, for all $j\in\nn$, $\mu(6^jB\setminus 2B)=0$, then we see that $\mu(\cx\setminus 2B)=0$,
which implies that $\supp\mu\st\overline{2B}$, the closure of $2B$.
This contradicts to that $r_B\le{\rm diam}(\supp\mu)/40$ and thus \eqref{2.29} holds true.
Now assume that $S$ is the smallest ball of the form $6^jB$ such that $\mu(S\setminus 2B)>0$.
We then know that $\mu(6^{-1}S\setminus 2B)=0$ and $\mu(S\setminus 2B)>0$.
Thus, $\mu(S\setminus(6^{-1}S\cup 2B))>0$.
By this and \cite[Lemma 3.3]{h10}, we choose $x_0\in S\setminus(6^{-1}S\cup 2B)$ such that
the ball centered at $x_0$ with the radius $6^{-k}r_S$ for some $k\ge 2$ is doubling.
Let $B_0$ be the biggest ball of this form. Then we see that $B_0\st 2S$ and
${\rm dist}(B_0,B)\gs r_B$. We now claim that
\begin{equation}\label{2.30}
K_{B,2S}\ls 1.
\end{equation}
Indeed, if $S=6B$, then by Lemma \ref{l2.1}(ii), we have \eqref{2.30}.
If $S\supset 6^2B$, then $(1/12)S\supset3B$. Notice that, in this case,
$\mu(6^{-1}S\setminus2B)=0$ implies that $K_{2B,(1/12)S}=1$. By this, together with
(iv) and (ii) of Lemma \ref{l2.1}, we further have
\begin{equation*}
K_{B,2S}\ls K_{B,2B}+K_{2B,(1/12)S}+K_{(1/12)S,2S}
\ls K_{B,2B}+K_{(1/12)S,2S}\ls1.
\end{equation*}
Thus, \eqref{2.30} also holds true in this case, which shows \eqref{2.30}.
Moreover, assume that $r_{B_0}=6^{-k_0}r_S$, where $k_0\ge 2$, and there exists
$N\in\nn$ such that $\wz{6B_0}=6^{N+1}B_0$. By the definition of $B_0$, we
see that $N-k_0+1\ge-1$, hence $r_{6(\wz{6B_0})}\ge r_S$ and
$2S\st24(\wz{6B_0})$. Therefore, by (i) through (iv) of Lemma \ref{l2.1},
we see that
\begin{equation}\label{2.31}
K_{B_0,2S}\le K_{B_0,24(\wz{6B_0})}\ls K_{B_0,\wz{6B_0}}
+K_{\wz{6B_0},24(\wz{6B_0})}\ls1.
\end{equation}
By \eqref{2.1}, \eqref{2.31}, \eqref{2.30}, Lemma \ref{l2.1}(iii) and Theorem \ref{t1.13}(III), we know that
\begin{eqnarray}\label{2.32}
\qquad&&|m_{B_0}(T_\az a)-m_{\wz B}(T_\az a)|\\
&&\hs\le|m_{B_0}(T_\az a)-m_{2S}(T_\az a)|+|m_{2S}(T_\az a)-m_B(T_\az a)|
+|m_B(T_\az a)-m_{\wz B}(T_\az a)|\noz\\
&&\hs\ls (K_{B_0,2S}+K_{B,2S}+K_{B,\wz{B}})\|T_\az a\|_{\rbmo}\noz\\
&&\hs\ls\|a\|_{L^{1/\az}(\mu)}\ls[\mu(2B)]^\az\|a\|_{\li}\noz,
\end{eqnarray}
Moreover, by \eqref{1.7}, ${\rm dist}(B_0,B)\gs r_B$, \eqref{1.2} and \eqref{1.3}, we conclude that, for all $y\in B_0$,
\begin{equation}\label{2.33}
|T_\az a(y)|\ls\frac{\mu(B)}{[\lz(c_B,r_B)]^{1-\az}}\|a\|_{\li}
\ls[\mu(2B)]^\az\|a\|_{\li}.
\end{equation}
The estimate \eqref{2.28} follows from \eqref{2.32} and \eqref{2.33},
which completes the proof of \eqref{2.26} in this case.

Case (ii) $r_B> {\rm diam}(\supp\mu)/40$. In this case, without loss of
generality, we may assume that $r_B\le8{\rm diam}(\supp\mu)$. Then, by
Remark \ref{r1.2}(ii), we see that $B\cap\supp\mu$ is covered by finite number balls
$\{B_j\}_{j=1}^J$ with radius $r_B/800$, where $J\in\nn$ is independent of $r_B$.
For any $j\in\{1,\ldots,J\}$, we define
$a_j:=\frac{\chi_{B_j}}{\sum_{k=1}^J\chi_{B_k}}a$. Since \eqref{2.26} is
true if we replace $B$ by $2B_j$ which contains the support of $a_j$, by \eqref{1.7},
\eqref{2.26}, \eqref{1.3}, \eqref{1.2} and the fact that,
if $B\cap B_j\neq \emptyset$, then $4B_j\st 2B$, we have
\begin{eqnarray*}
&&\int_B|T_\az a(x)|^{q_0}\,d\mu(x)\\
&&\hs\ls\sum_{j=1}^J\int_{B\setminus 2B_j}|T_\az a(x)|^{q_0}\,d\mu(x)
+\sum_{j=1}^J\int_{2B_j}\cdots\\
&&\hs\ls\sum_{j=1}^J\int_{B\setminus 2B_j}\lf[\int_{B_j}
\frac{|a_j(y)|}{[\lz(x,d(x,y))]^{1-\az}}\,d\mu(y)\r]^{q_0}\,d\mu(x)
+\sum_{j=1}^J\|a_j\|_{\li}^{q_0}[\mu(4B_j)]^{q_0}\\
&&\hs\ls\sum_{j=1}^J\|a_j\|_{\li}^{q_0}\lf\{\int_{B\setminus 2B_j}
\lf[\int_{B_j}\frac1{[\lz(y,d(x,y))]^{1-\az}}\,d\mu(y)\r]^{q_0}\,d\mu(x)
+[\mu(4B_j)]^{q_0}\r\}\\
&&\hs\ls\sum_{j=1}^J\|a_j\|_{\li}^{q_0}
\lf\{\lf[\frac{\mu(B_j)}{(\lz(c_{B_j},r_{B_j}))^{1-\az}}\r]^{q_0}\mu(B)
+[\mu(4B_j)]^{q_0}\r\}\\
&&\hs\ls\sum_{j=1}^J\|a_j\|_{\li}^{q_0}
\lf\{[\mu(2B)]^{\az q_0}\mu(B)
+[\mu(4B_j)]^{q_0}\r\}
\ls \|a\|_{\li}^{q_0}[\mu(2B)]^{q_0}.
\end{eqnarray*}
Thus, \eqref{2.26} also holds true in this case.

Now we turn to prove (IV). By a standard argument (see \cite[Theorem 4.1]{hyy}
for the details),
it suffices to show that, for any $(\fz,1)_\lz$-atomic block $b$,
\begin{equation}\label{2.34}
\|T_\az b\|_{L^{q_0}(\mu)}\ls|b|_{H^{1,\fz}_{{\rm atb}}(\mu)}.
\end{equation}
Assume that $\supp b\st R$ and $b=\sum_{j=1}^2\lz_ja_j$, where, for $j\in\{1,2\}$,
$a_j$ is a function supported in $B_j\st R$ such that $\|a_j\|_{\li}
\le[\mu(4B_j)]^{-1}K^{-1}_{B_j,R}$ and $|\lz_1|+|\lz_2|\sim|b|_{H^{1,\fz}_{\rm atb}(\mu)}$.
Write
$$\int_\cx|T_\az b(x)|^{q_0}\,d\mu(x)=\int_{2R}|T_\az b(x)|^{q_0}\,d\mu(x)
+\int_{\cx\setminus 2R}\cdots=:{\rm L_1}+{\rm L_2}.$$
For ${\rm L_1}$, we see that
$${\rm L_1}\ls\sum_{j=1}^2|\lz_j|^{q_0}\int_{2B_j}|T_\az a_j(x)|^{q_0}\,d\mu(x)
+\sum_{j=1}^2|\lz_j|^{q_0}\int_{2R\setminus 2B_j}\cdots
=:{\rm L_{1,1}}+{\rm L_{1,2}}.$$
From \eqref{2.26}, $\|a_j\|_{\li}\ls[\mu(4B_j)]^{-1}K_{B_j,R}^{-1}$ for
$j\in\{1,2\}$, and Definition \ref{d1.11}(iii), it follows that
$${\rm L_{1,1}}\ls\sum_{j=1}^2|\lz_j|^{q_0}\|a_j\|_{\li}^{q_0}[\mu(4B_j)]^{q_0}
\ls\sum_{j=1}^2|\lz_j|^{q_0}\ls|b|^{q_0}_{H^{1,\fz}_{\rm atb}(\mu)}.$$
For ${\rm L_{1,2}}$, by \eqref{1.7}, Minkowski's inequality, \eqref{1.2}, \eqref{1.3},
(ii) and (iv) of Lemma \ref{l2.1}, the fact that
$\|a_j\|_{\li}\ls[\mu(4B_j)]^{-1}K_{B_j,R}^{-1}$ and Definition \ref{d1.11}(iii), we see that
\begin{eqnarray*}
{\rm L_{1,2}}&&\ls\sum_{j=1}^2|\lz_j|^{q_0}
\int_{2R\setminus 2B_j}\lf\{\int_{B_j}
\frac{|a_j(y)|}{[\lz(x,d(x,y))]^{1-\az}}\,d\mu(y)\r\}^{q_0}\,d\mu(x)\\
&&\ls\sum_{j=1}^2|\lz_j|^{q_0}
\lf\{\int_{B_j}|a_j(y)|\lf[\int_{2R\setminus 2B_j}
\frac1{\lz(x,d(x,y))}\,d\mu(x)\r]^{1/q_0}\,d\mu(y)\r\}^{q_0}\\
&&\ls\sum_{j=1}^2|\lz_j|^{q_0}
[\mu(B_j)]^{q_0}\|a_j\|_{\li}^{q_0}\int_{2R\setminus 2B_j}
\frac1{\lz(c_{B_j},d(x,c_{B_j}))}\,d\mu(x)\\
&&\ls\sum_{j=1}^2|\lz_j|^{q_0}
[\mu(B_j)]^{q_0}\|a_j\|_{\li}^{q_0}K_{B_j,R}
\ls\sum_{j=1}^2|\lz_j|^{q_0}\ls|b|^{q_0}_{H^{1,\fz}_{{\rm atb}}(\mu)}.
\end{eqnarray*}
Therefore, ${\rm L}_1\ls|b|^{q_0}_{H^{1,\fz}_{{\rm atb}}(\mu)}$.

On the other hand, from the fact that $\int_{\cx}b(y)\,d\mu(y)=0$, \eqref{1.8} and
Definition \ref{d1.11}(iii), we deduce that
\begin{eqnarray*}
{\rm L_2}&&\le\int_{\cx\setminus 2R}
\lf[\int_R|K_\az(x,y)-K_\az(x,c_R)||b(y)|\,d\mu(y)\r]^{q_0}\,d\mu(x)\\
&&\ls\lf[\int_R|b(y)|\,d\mu(y)\r]^{q_0}\sum_{i=1}^\fz\int_{2^{i+1}R\setminus 2^iR}
\frac{r_R^{\dz q_0}}{\lz(c_R,d(x,c_R))[d(x,c_R)]^{\dz q_0}}\,d\mu(x)\\
&&\ls(|\lz_1|+|\lz_2|)^{q_0}\sum_{i=1}^\fz 2^{-i\dz q_0}
\ls|b|^{q_0}_{H^{1,\fz}_{{\rm atb}}(\mu)},
\end{eqnarray*}
which, together with the estimate for ${\rm L}_1$, implies \eqref{2.34} and hence
completes the proof of (III)$\Rightarrow$(IV).

(IV)$\Rightarrow$(V) is obvious, the details being omitted.

(V)$\Rightarrow$(I) We first claim that, for any ball $B$ and $f\in\lon$ with
bounded support in $(6/5)B$,
\begin{equation}\label{2.35}
\frac1{\mu(6B)}\int_B|T_\az f(y)|\,d\mu(y)\ls\|f\|_{L^{1/\az}(\mu)}.
\end{equation}
Assume first that $r_B\le{\rm diam}(\supp\mu)/40$. We consider the same construction
in the proof of (III)$\Rightarrow$(IV). Let $B$, $B_0$ and $S$ be the same as there.
We know that $B,\,B_0\st2S$, $B_0$ is doubling, $K_{B,2S}\ls1$, $K_{B_0,2S}\ls1$
and ${\rm dist}(B_0,B)\gs r_B$.
Let $g=f+C_{B_0}\chi_{B_0}$, where $C_{B_0}$ is a constant such that
$\int_\cx g(x)\,d\mu(x)=0$. Then $g$ is an $(\fz,1)_\lz$-atomic block supported in $R$.
It is easy to show that
\begin{equation}\label{2.36}
\|g\|_{\hon}\ls[\mu(6B)]^{1/q_0}\|f\|_{L^{1/\az}(\mu)},
\end{equation}
where $q_0:=1/(1-\az)$.
For $y\in B$, by \eqref{1.7}, the fact that ${\rm dist}(B_0,B)\gs r_B$,
\eqref{1.3}, $\int_\cx g(x)\,d\mu(x)=0$, H\"older's inequality and \eqref{1.2},  we have
\begin{eqnarray}\label{2.37}
&&|T_\az(C_{B_0}\chi_{B_0})(y)|\\
&&\noz\hs\ls|C_{B_0}|\int_{B_0}\frac1{[\lz(y, d(x,y))]^{1-\az}}\,d\mu(x)
\ls\frac{|C_{B_0}|\mu(B_0)}{[\lz(c_B, r_B)]^{1-\az}}\\
&&\noz\hs\ls\|f\|_{\lon}\frac1{[\lz(c_B, r_B)]^{1-\az}}
\ls\lf[\frac{\mu((6/5)B)}{\lz(c_B, r_B)}\r]^{1-\az}\|f\|_{L^{1/\az}(\mu)}
\ls\|f\|_{L^{1/\az}(\mu)}.
\end{eqnarray}
Denote $\|g\|_{\hon}[\mu(B)]^{-1/q_0}$ simply by $E$. Then by (V) of Theorem \ref{t1.13}
and \eqref{2.36}, we conclude that
\begin{eqnarray}\label{2.38}
\int_B|T_\az g(y)|\,d\mu(y)
&&=\int_0^{E}\mu(\{y\in B:\ |T_\az g(y)|> t\})\,d t+\int_{E}^\fz\cdots\\
&&\ls E\mu(B)+\int_{E}^\fz\|g\|_{\hon}^{q_0} t^{-q_0}\,d t
\ls\mu(6B)\|f\|_{L^{1/\az}(\mu)}.\noz
\end{eqnarray}
The estimates \eqref{2.37} and \eqref{2.38} imply \eqref{2.35} in this case.

If $r_B>{\rm diam}(\supp\mu)/40$, by an argument similar to that used
in the proof of
\eqref{2.26} in the case of $r_B>{\rm diam}(\supp\mu)/40$, we can prove that
\eqref{2.35} also holds true in this case.

Now we turn to prove (I). By Theorem \ref{t2.7}, we only need to prove that
$T_\az$ is bounded from $L^{1/\az}(\mu)$ into $\rbmo$. Repeating the proofs
of \eqref{2.24} and \eqref{2.25} step by step, only needing to replace the
$(\lon,L^{1/(1-\az),\,\fz}(\mu))$-boundedness of $T_\az$ by \eqref{2.35}
when estimating H, we then know that $T_\az$ is bounded from $L^{1/\az}(\mu)$ into $\rbmo$,
which completes the proof that (V) implies (I) and hence
the proof of Theorem \ref{t1.13}.
\end{proof}

\section{Proofs of Theorems \ref{t1.15} and \ref{t1.19}\label{s3}}

\hskip\parindent In order to prove Theorem \ref{t1.15}, we need a technical lemma which
is a variant over non-homogeneous metric measure spaces of \cite[Lemma 2]{cs}.
\begin{lemma}\label{l3.1}
Let $\az\in (0,1)$, $p\in(1,1/\az)$, $\rho\in [5,\fz)$, $r\in(p,1/\az)$ and $1/q=1/r-\az$.
Then there exists a positive constant $C$ such that, for all $f\in L^r(\mu)$,
$$\|M_{p,\rho}^{(\az)}f\|_{L^{q}(\mu)}\le C\|f\|_{L^{r}(\mu)},$$
where
$$M_{p,\rho}^{(\az)}f(x):=\sup_{ Q\ni x}\lf\{\frac{1}{[\mu(\rho Q)]^{1-\az p}}
\int_{Q}|f(y)|^{p}\,d\mu(y)\r\}^{1/p}$$
and the supremum is taken over all balls $Q\ni x$.
\end{lemma}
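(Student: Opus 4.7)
The plan is to reduce the lemma to the $L^s(\mu)\to L^t(\mu)$ boundedness of the scalar-weighted fractional maximal operator
$$M^{(\bz)}g(x):=\sup_{Q\ni x}\frac{1}{[\mu(\rho Q)]^{1-\bz}}\int_{Q}|g(y)|\,d\mu(y)$$
via the substitution $g:=|f|^p$, $s:=r/p$, $t:=q/p$ and $\bz:=\az p$. Under these choices one has $s\in(1,1/\bz)$, $1/t=1/s-\bz$, and the pointwise identity $[M_{p,\rho}^{(\az)}f(x)]^p=M^{(\bz)}g(x)$, so the lemma is equivalent to $\|M^{(\bz)}g\|_{L^t(\mu)}\ls\|g\|_{L^s(\mu)}$.

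For each fixed ball $Q\ni x$ I would write down two competing upper bounds on the corresponding average. First, pulling out the factor $[\mu(\rho Q)]^\bz$ and invoking the definition of $M_{(\rho)}$ gives
$$\frac{1}{[\mu(\rho Q)]^{1-\bz}}\int_{Q}|g|\,d\mu\le[\mu(\rho Q)]^{\bz}M_{(\rho)}g(x)=:A.$$
Second, H\"older's inequality with exponents $s$, $s'$ together with $Q\st\rho Q$ gives
$$\frac{1}{[\mu(\rho Q)]^{1-\bz}}\int_{Q}|g|\,d\mu\le\|g\|_{L^s(\mu)}[\mu(\rho Q)]^{\bz-1/s}=\|g\|_{L^s(\mu)}[\mu(\rho Q)]^{-1/t}=:B.$$
Since $s/t+\bz s=1$ is exactly a reformulation of $1/t=1/s-\bz$, and since $X\le A$ together with $X\le B$ always forces $X=X^{s/t}X^{\bz s}\le A^{s/t}B^{\bz s}$, these two estimates combine into a $Q$-independent bound
$$\frac{1}{[\mu(\rho Q)]^{1-\bz}}\int_{Q}|g|\,d\mu\le[M_{(\rho)}g(x)]^{s/t}\|g\|_{L^s(\mu)}^{\bz s},$$
whose supremum over $Q\ni x$ is the pointwise estimate $M^{(\bz)}g(x)\le[M_{(\rho)}g(x)]^{s/t}\|g\|_{L^s(\mu)}^{\bz s}$.

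Raising to the $t$-th power, integrating over $\cx$, and applying the $L^s(\mu)$-boundedness of $M_{(\rho)}$ from Lemma \ref{l2.5}(i) (valid because $s>1$) then yields
$$\|M^{(\bz)}g\|_{L^t(\mu)}^t\le\|g\|_{L^s(\mu)}^{\bz st}\int_{\cx}[M_{(\rho)}g(x)]^s\,d\mu(x)\ls\|g\|_{L^s(\mu)}^{\bz st+s}=\|g\|_{L^s(\mu)}^{t},$$
where the final equality uses $\bz st+s=t$, another immediate consequence of $1/t=1/s-\bz$. Unwinding the substitutions $\|g\|_{L^s(\mu)}=\|f\|_{L^r(\mu)}^p$ and $\|M^{(\bz)}g\|_{L^t(\mu)}^t=\|M_{p,\rho}^{(\az)}f\|_{L^q(\mu)}^q$ then recovers the stated assertion.

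The main point to be careful about is exponent bookkeeping of the four parameters $p$, $r$, $q$, $\az$; once the substitution $(s,t,\bz)=(r/p,q/p,\az p)$ is adopted, the identities $s/t+\bz s=1$ and $\bz st+s=t$ both drop out of $1/t=1/s-\bz$. No further geometric properties of $(\cx,d,\mu)$ are needed: the two competing bounds rely only on $Q\st\rho Q$ and H\"older's inequality, and the upper doubling and geometrically doubling hypotheses enter only through Lemma \ref{l2.5}(i).
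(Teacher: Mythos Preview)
Your proof is correct and takes a genuinely different route from the paper. The paper establishes the weak-type endpoint estimate
\[
\mu\bigl(\{x:\ M_{p,\rho}^{(\az)}f(x)>t\}\bigr)\ls\bigl[t^{-1}\|f\|_{L^p(\mu)}\bigr]^{p/(1-\az p)}
\]
via a Vitali-type covering argument (using the pairwise-disjoint balls extracted from \cite{he,h10}), then observes the monotonicity $M_{p,\rho}^{(\az)}f\le M_{s,\rho}^{(\az)}f$ for any $s\in(p,1/\az)$ to obtain a second weak-type estimate, and finishes with the Marcinkiewicz interpolation theorem.

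You instead run a Hedberg-type argument: after the substitution $(g,s,t,\bz)=(|f|^p,r/p,q/p,\az p)$ you produce the pointwise bound
\[
M^{(\bz)}g(x)\le[M_{(\rho)}g(x)]^{s/t}\|g\|_{L^s(\mu)}^{\bz s}
\]
by interpolating between the trivial estimate and the H\"older estimate, and then integrate using only the strong $L^s(\mu)$-boundedness of $M_{(\rho)}$ from Lemma~\ref{l2.5}(i). Your approach is more elementary in that it avoids both the covering lemma and the interpolation theorem, and it gives a strong-type bound directly; the paper's approach has the advantage of yielding the weak-type endpoint \eqref{3.1} as a byproduct, which is of independent interest (cf.\ Remark~\ref{r3.2}). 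The exponent bookkeeping you flag is handled correctly: $s/t+\bz s=1$ and $\bz st+s=t$ are both immediate from $1/t=1/s-\bz$.
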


\begin{proof}
We first prove that
\begin{equation}\label{3.1}
\mu\lf(\lf\{x\in\cx:\ M_{p,\rho}^{(\az)}f(x)> t\r\}\r)
\ls\lf[\|f\|_{L^{p}(\mu)}/ t\r]^{p/(1-\az p)}.
\end{equation}
Let $E:=\{x\in\cx:\ M_{p,\rho}^{(\az)}f(x)> t\}$.

For any $x\in E$, there exists a ball $Q_{x}$ containing $x$ such that
\begin{equation}\label{3.2}
\frac{1}{[\mu(\rho Q_{x})]^{1-\az p}}\int_{Q_{x}}|f(y)|^{p}\,d\mu(y)> t^{p}.
\end{equation}
By \cite[Theorem 1.2]{he} and \cite[Lemma 2.5]{h10}, there exist countable
disjoint subsets $\{Q_{j}\}_{j}$
of $\{Q_{x}:x\in E\}$ such that $E\st\cup_{j}\rho Q_{j}$. Let
$q:=p/(1-\az p)$. Then $p/q\le 1$. Hence, by \eqref{3.2} and $p/q=1-\az p$,
we see that
$$[\mu(E)]^{p/q}\le [\mu\lf(\cup_{j}\rho Q_{j}\r)]^{p/q}
\le \sum_{j}[\mu(\rho Q_{j})]^{p/q}\le \sum_{j}\frac{1}{ t^{p}}
\int_{Q_{j}}|f(y)|^{p}\,d\mu(y)
\le \frac{\|f\|_{L^{p}(\mu)}^{p}}{t^{p}}.$$
Hence $\mu(E)\ls  t^{-q}\|f\|_{L^{p}(\mu)}^{q}$, namely, \eqref{3.1} holds true.

Notice that, if $p<s<1/\az$, by using H\"older's inequality, we have
$M_{p,\rho}^{(\az)}f\le M_{s,\rho}^{(\az)}f.$
Hence, by the proceeding arguments, we see that
$\mu(E)\ls[\frac{1}{ t}\|f\|_{L^{s}(\mu)}]^{s/(1-\az s)},$
which, together with \eqref{3.1} and the Marcinkiewicz interpolation theorem,
further implies the desired result and hence completes the proof of Lemma \ref{l3.1}.
\end{proof}

\begin{remark}\rm\label{r3.2}
Let $\az\in (0,1)$. By Lemma \ref{l3.1}, the maximal operators $M_{r,\rho}^{(\az)}$
($r<p<1/\az$) and
$M_{(\rho)}^{(\az)}:=M_{1,\rho}^{(\az)}$ are bounded from $\lp$
to $\lq$ for $p\in (r,1/\az)$ and $1/q=1/p-\az$.
\end{remark}

Now we introduce the  fractional coefficient
${\wz K}_{B,S}^{(\az)}$ adapted from
\cite{cs}.

\begin{definition}\label{d3.3}
For any two balls $B\st S$, ${\wz K}_{B,S}^{(\az)}$ is
defined by
\begin{equation*}
{\wz K}_{B,S}^{(\az)}
:=1+\sum_{k=1}^{N_{B,S}}\lf[\frac{\mu(6^{k}B)}
{ \lz(x_{B},6^{k}r_{B})}\r]^{1-\az},
\end{equation*}
where $\az\in [0,1)$ and $N_{B,S}$ is defined as in Remark \ref{r1.6}.
\end{definition}
Now we give out some simple properties of ${\wz K}_{B,S}^{(\az)}$, which
are completely analogous to \cite[Lemma 3]{cs}. We omit the details;
see \cite[Lemma 3.5]{fyy} for the proofs of the case that $\az=0$.
\begin{lemma}\label{l3.4}
Let $\az\in[0,1)$.
\vspace{-0.25cm}
\begin{itemize}
 \item[\rm(i)] For all balls $B\st R\st S$, ${\wz K}_{B,R}^{(\az)}
 \le 2{\wz K}_{B,S}^{(\az)}$.
\vspace{-0.25cm}
 \item[\rm(ii)] For any $\rho\in [1,\fz)$, there exists a positive constant
$C_{(\rho)}$, depending only on $\rho$, such that, for all balls $B\st S$ with
$r_S\le \rho r_B$, $\wz K_{B,S}^{(\az)}\le C_{(\rho)}$.
\vspace{-0.25cm}
 \item[\rm(iii)] There exists a positive constant
$C_{(\az)}$, depending on $\az$, such that, for all balls $B$,
${\wz K}_{B,\wz B}^{(\az)}\le C_{(\az)}$.
\vspace{-0.25cm}
\item[\rm(iv)] There exists a positive constant $c$, depending on $C_\lz$ and $\az$,
such that, for all balls
$B\st R\st S$,
${\wz K}_{B,S}^{(\az)}\le {\wz K}_{B,R}^{(\az)}+c{\wz K}_{R,S}^{(\az)}.$
\vspace{-0.25cm}
 \item[\rm(v)] There exists a positive constant $\wz c$, depending on $C_\lz$ and $\az$,
 such that, for all balls
$B\st R\st S$,
${\wz K}_{R,S}^{(\az)}\le \wz c{\wz K}_{B,S}^{(\az)}.$
\end{itemize}
\end{lemma}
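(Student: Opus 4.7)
The plan is to adapt the arguments used for the $\az=0$ version in \cite[Lemma 3.5]{fyy}, which in turn parallels \cite[Lemma 3]{cs} in the non-doubling setting. Three basic facts will be used throughout: each summand $\mu(6^kB)/\lz(x_B,6^kr_B)$ lies in $(0,1]$ by the upper doubling condition (so the exponent $1-\az$ only enlarges it, and hence the $\az=0$ bound is not directly inherited); the subadditivity $(a+b)^{1-\az}\le a^{1-\az}+b^{1-\az}$ for $a,b\ge 0$; and the symmetric dominating function $\lz$ provided by Remark \ref{r1.4}(ii) through \eqref{1.3}. The nontrivial geometric inputs are the $(6,\bz_6)$-doubling threshold \eqref{1.4}, the elementary relation $N_{B,S}\le N_{B,R}+N_{R,S}$ (obtained from $6^{N_{B,R}+N_{R,S}}r_B\ge 6^{N_{R,S}}r_R\ge r_S$), and the observation that $d(x_B,x_R)\le r_R$ whenever $B\st R$.

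Properties (i), (ii) and (iii) are direct. For (i), $B\st R\st S$ forces $r_R\le r_S$ and hence $N_{B,R}\le N_{B,S}$, so $\wz{K}_{B,R}^{(\az)}$ is at most a partial sum of $\wz{K}_{B,S}^{(\az)}$; the factor $2$ in the statement is only used to absorb the additive constant $1$. For (ii), $r_S\le\rho r_B$ gives $N_{B,S}\le 1+\log_6\rho$, and each summand is bounded by $1$ via $\mu(B(x,r))\le\lz(x,r)$. For (iii), with $\wz{B}=6^{j_0}B$ and $j_0$ minimal, the balls $6^kB$ for $0\le k<j_0$ all fail the $(6,\bz_6)$-doubling condition; iterating $\mu(6^{k+1}B)>\bz_6\mu(6^kB)$ and combining it with the doubling $\lz(x_B,6^{j_0}r_B)\le 6^{\nu(j_0-k)}\lz(x_B,6^kr_B)$ yields $\mu(6^kB)/\lz(x_B,6^kr_B)\le (6^\nu/\bz_6)^{j_0-k}$, and $6^\nu/\bz_6<1$ by \eqref{1.4}; summing the resulting geometric series (raised to the power $1-\az$) gives an $\az$-dependent constant.

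The real obstacle lies in (iv) and (v), where $B$ and $R$ need not be concentric. For (iv), I will split the sum defining $\wz{K}_{B,S}^{(\az)}$ at the index $k=N_{B,R}$. The initial block is bounded by $\wz{K}_{B,R}^{(\az)}$; for the tail I reindex $k=N_{B,R}+j$, so that $6^jr_R\le 6^kr_B<6^{j+1}r_R$. Since $d(x_B,x_R)\le r_R\le 6^kr_B$, the center-switching estimate \eqref{1.3} and the doubling of $\lz$ let me replace $\mu(6^kB)$ by a constant multiple of $\mu(6^{j+c_0}R)$ and $\lz(x_B,6^kr_B)$ by a constant multiple of $\lz(x_R,6^jr_R)$ (for a fixed $c_0$); after absorbing the index shift, the tail is bounded by $c\,\wz{K}_{R,S}^{(\az)}$. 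Property (v) is handled by the mirror argument: for each $k\in\{1,\dots,N_{R,S}\}$, setting $m:=N_{B,R}+k$ one has $6^kR\st 6^{m+1}B$ (because $d(x_R,x_B)\le r_R\le 6^kr_R\le 6^mr_B$), and \eqref{1.3} together with the doubling of $\lz$ yields $[\mu(6^kR)/\lz(x_R,6^kr_R)]^{1-\az}$ bounded by a universal constant times $[\mu(6^{m+1}B)/\lz(x_B,6^{m+1}r_B)]^{1-\az}$; summing over $k$ and recalling $N_{B,R}+N_{R,S}+1\le N_{B,S}+1$, with the last extra term bounded by $1$, completes the proof. The hardest point in both arguments is the bookkeeping of the index shift together with the center switch via \eqref{1.3}; once that is set up correctly, only absolute constants are produced.
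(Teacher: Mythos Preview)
Your proposal is correct and follows exactly the route the paper indicates: the paper omits the proof, pointing to \cite[Lemma 3]{cs} and \cite[Lemma 3.5]{fyy} for the $\az=0$ case, and your argument is precisely the natural adaptation of those proofs with the exponent $1-\az$ inserted. One small bookkeeping slip in (v): from $N_{B,S}\le N_{B,R}+N_{R,S}$ you only get $m+1\le N_{B,S}+2$, not $N_{B,S}+1$, so there may be two overflow terms rather than one, but each is still bounded by $1$ and is absorbed into $\wz c$.
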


Now we introduce the sharp maximal operator ${\wz M}^{\#,\,\az}$ associated with
${\wz K}_{B,S}^{(\az)}$.

\begin{definition}\label{d3.5} Let $\az\in[0,1)$.
For all $f\in L^1_\loc(\mu)$ and $x\in\cx$, the \emph{sharp maximal function ${\wz M}^{\#,\,\az}f(x)$}
of $f$ is defined by
$${\wz M}^{\#,\,\az}f(x):=\sup_{B\ni x}\frac1{\mu(6B)}\int_B|f(x)-m_{\wz B}f|\,d\mu(x)
+\sup_{(Q,R)\in \bdz_x}\frac{|m_Qf-m_Rf|}{{\wz K}_{Q,R}^{(\az)}},$$
where $\bdz_{x}:= \{(Q,R):\ \,x\in Q\st R\ {\rm and}\
Q,\,R\,\ {\rm are\ doubling\ balls}\}$.
\end{definition}

Similar to \cite[Theorem 4.2]{bd}, we have the following lemma.

\begin{lemma}\label{l3.6}
Let $f\in L^1_\loc(\mu)$ satisfy that $\int_\cx f(x)\,d\mu(x)=0$ when
$\|\mu\|:=\mu(\cx)<\fz$. Assume that, for some $p\in (1,\fz)$, $\inf\{1,Nf\}\in \lp$.
Then there exists a positive constant $C$, independent of $f$, such that
$\|Nf\|_\lp\le C\|\wz M^{\#,\az}f\|_\lp.$
\end{lemma}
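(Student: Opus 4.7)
The plan is to adapt the proof scheme of \cite[Theorem 4.2]{bd}, which is the analogue of this lemma with $\wz K_{B,S}$ in place of $\wz K_{B,S}^{(\az)}$, and replace each appearance of $\wz K_{B,S}$ by $\wz K_{B,S}^{(\az)}$ throughout. Since Lemma \ref{l3.4} shows that $\wz K_{B,S}^{(\az)}$ satisfies the same list of structural properties (monotonicity, boundedness on comparable balls, boundedness $\wz K_{B,\wz B}^{(\az)}\ls 1$, quasi-triangle inequality, and the $\wz c$-bound for sub-balls) that are used in \cite{bd}, the same argument can be pushed through. The core device is a good-$\lambda$ inequality of the form
\begin{equation*}
\mu\lf(\lf\{x\in\cx:\ Nf(x)>\bz\lz,\ \wz M^{\#,\az}f(x)\le\eta\lz\r\}\r)
\le C\eta\,\mu\lf(\lf\{x\in\cx:\ Nf(x)>\lz\r\}\r),
\end{equation*}
valid for $\lz\in(0,\fz)$, sufficiently large $\bz\in(1,\fz)$ and sufficiently small $\eta\in(0,\fz)$; integrating this over $\lz$ against $\lz^{p-1}\,d\lz$, absorbing the tail via $\inf\{1,Nf\}\in\lp$, and using Lemma \ref{l2.5}(i) to control $Nf$ on the complementary level set gives the conclusion.

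First I would cover the open set $\boz_\lz:=\{x\in\cx:\ Nf(x)>\lz\}$ by a Whitney-type family of doubling balls obtained from the geometrically doubling property (together with the Hyt\"onen-type doubling-ball construction already recalled after Definition \ref{d1.7}), and work inside a fixed ball $Q$ of this family. Then I would fix $x\in Q$ belonging to the ``small sharp maximal'' set and a doubling ball $R\ni x$ realizing $Nf(x)$; I would connect $Q$ to $R$ (or to a common enlarging doubling ancestor $S\supset Q\cup R$) and write
\begin{equation*}
|m_R(f)-m_Q(f)|\le|m_R(f)-m_S(f)|+|m_Q(f)-m_S(f)|
\le\wz M^{\#,\az}f(x)\,\lf[\wz K^{(\az)}_{R,S}+\wz K^{(\az)}_{Q,S}\r].
\end{equation*}
The monotonicity and additivity clauses of Lemma \ref{l3.4} reduce the right-hand side to $C\wz M^{\#,\az}f(x)\cdot\wz K^{(\az)}_{Q,S}$, and an analogous control for the non-doubling mean $m_{\wz R}(f)$ is obtained through Lemma \ref{l3.4}(iii). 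The hypothesis $\int_\cx f\,d\mu=0$ (when $\mu(\cx)<\fz$) is used exactly as in \cite{bd} to anchor the chain of means at a ball of total mass and prevent an uncontrolled constant at infinity.

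The main obstacle, and the place where the factor $\az$ actually enters beyond cosmetics, is bounding the number of generations one must climb when chaining $Q$ to $R$ by
\begin{equation*}
\wz K^{(\az)}_{Q,S}\le C\wz K^{(\az)}_{R,S}\quad\text{or}\quad \wz K^{(\az)}_{Q,S}\ls 1+\sum_{k=1}^{N_{Q,S}}\lf[\mu(6^kQ)/\lz(c_Q,6^kr_Q)\r]^{1-\az},
\end{equation*}
and checking that the geometric weights $[\mu(6^kB)/\lz(c_B,6^kr_B)]^{1-\az}$ still telescope correctly under the upper doubling condition \eqref{1.2}, so that the good-$\lambda$ constant $C\eta$ can be made arbitrarily small by choosing $\eta$ small; the presence of the exponent $1-\az\in(0,1]$ rather than $1$ makes the summands larger (since $\mu/\lz\le 1$), but Lemma \ref{l3.4} guarantees that the cumulative effect is still uniformly bounded on comparable scales. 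Once this good-$\lambda$ estimate is in hand, the standard layer-cake integration, combined with the a priori hypothesis $\inf\{1,Nf\}\in\lp$ to legitimize each step, delivers $\|Nf\|_\lp\le C\|\wz M^{\#,\az}f\|_\lp$ exactly as in \cite[Theorem 4.2]{bd}.
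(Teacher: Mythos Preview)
Your proposal is correct and matches the paper's own approach: the paper does not give a detailed proof but simply remarks that the lemma is ``similar to \cite[Theorem 4.2]{bd}'', which is precisely the adaptation you outline. Your observation that Lemma~\ref{l3.4} supplies all the structural properties of $\wz K_{B,S}^{(\az)}$ needed to rerun the good-$\lambda$ argument of \cite{bd} verbatim is exactly the point.
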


The following two lemmas are completely analogous to \cite[Lemmas 5 and 6]{cs},
the details being omitted.

\begin{lemma}\label{l3.7}
For any $\az\in [0,1)$, there exists some positive constant $P_\az$ (big enough),
depending only on $C_\lz$ in \eqref{1.2} and $\az$, such that, if $m\in\nn$,
$B_1\st\cdots\st B_m$ are concentric balls with
${\wz K}_{B_i,B_{i+1}}^{(\az)}>P_\az$
for $i\in\{1,\ldots,m-1\}$, then there exists a positive constant $C$,
depending only on $C_\lz$ and $\az$, such that
$\sum_{i=1}^{m-1}{\wz K}_{B_i,B_{i+1}}^{(\az)}\le C{\wz K}_{B_1,B_m}^{(\az)}.$
\end{lemma}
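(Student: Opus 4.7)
The plan is to reduce everything to a single common indexing by powers of $6$ around the common center, use the upper doubling property to compare $\lz$ evaluated at adjacent powers of $6$, and then rely on the lower bound $\wz K_{B_i,B_{i+1}}^{(\az)}>P_\az$ to absorb the unavoidable ``$1$-per-level'' contribution.

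Concretely, write all balls as $B_i=B(x_0,r_i)$ with $r_1\le\cdots\le r_m$, and introduce the nonnegative integers $j_i$ characterized by $6^{j_i-1}r_1<r_i\le 6^{j_i}r_1$ (so $j_1=0$). A short check with the definition of $N_{B_i,B_{i+1}}$ shows that $N_{B_i,B_{i+1}}\le j_{i+1}-j_i+1$, and for each $k\in\{1,\dots,N_{B_i,B_{i+1}}\}$ one has $6^{k+j_i-1}r_1<6^kr_i\le 6^{k+j_i}r_1$. Combining this with the monotonicity of $r\mapsto\lz(x_0,r)$ and with \eqref{1.2} gives $\lz(x_0,6^kr_i)\sim\lz(x_0,6^{k+j_i}r_1)$ with constants depending only on $C_\lz$, and therefore
\begin{equation*}
\lf[\frac{\mu(6^kB_i)}{\lz(x_0,6^kr_i)}\r]^{1-\az}
\le C_\lz^{1-\az}\lf[\frac{\mu(6^{k+j_i}B_1)}{\lz(x_0,6^{k+j_i}r_1)}\r]^{1-\az}.
\end{equation*}

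Summing in $k$ and then in $i$, the ``shifted'' indices $k+j_i$ cover the range $\{1,\dots,j_m+1\}$ with bounded multiplicity (at most $2$), because consecutive blocks $[j_i+1,j_i+N_{B_i,B_{i+1}}]$ and $[j_{i+1}+1,j_{i+1}+N_{B_{i+1},B_{i+2}}]$ differ by at most one unit of overlap. This yields
\begin{equation*}
\sum_{i=1}^{m-1}\wz K_{B_i,B_{i+1}}^{(\az)}
\le (m-1)+C\sum_{\ell=1}^{j_m+1}\lf[\frac{\mu(6^\ell B_1)}{\lz(x_0,6^\ell r_1)}\r]^{1-\az}
\le (m-1)+C'\,\wz K_{B_1,B_m}^{(\az)},
\end{equation*}
where $C,\,C'$ depend only on $C_\lz$ and $\az$.

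To deal with the $(m-1)$ leftover, use the hypothesis $\wz K_{B_i,B_{i+1}}^{(\az)}>P_\az$: trivially $m-1\le P_\az^{-1}\sum_{i=1}^{m-1}\wz K_{B_i,B_{i+1}}^{(\az)}$. Choosing $P_\az>2$ permits the absorption step
\begin{equation*}
\lf(1-\frac{1}{P_\az}\r)\sum_{i=1}^{m-1}\wz K_{B_i,B_{i+1}}^{(\az)}\le C'\,\wz K_{B_1,B_m}^{(\az)},
\end{equation*}
which is the desired inequality. The main obstacle, and the step I would verify with the most care, is the term-by-term comparison with the correct bounded-overlap count when translating from dilations of $B_i$ to dilations of the common base ball $B_1$; here \eqref{1.2}, the monotonicity of $\lz$, and the elementary estimates on $N_{B_i,B_{i+1}}$ in terms of the $j_i$'s are all essential, and it is here that the dependence of $C$ and $P_\az$ on $C_\lz$ and $\az$ (through the $(1-\az)$ exponent) is fixed.
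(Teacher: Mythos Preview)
Your argument is correct and is essentially the standard one. The paper itself omits the proof of this lemma, stating only that it is ``completely analogous to \cite[Lemma~5]{cs}''; your reduction to a common $6$-adic scale around the joint center, the term-by-term comparison via \eqref{1.2} and the monotonicity of $\lz$, the bounded-overlap count of the shifted index blocks $[j_i+1,j_i+N_{B_i,B_{i+1}}]$, and the final absorption of the $(m-1)$ using the hypothesis $\wz K_{B_i,B_{i+1}}^{(\az)}>P_\az$ are precisely the ingredients behind that reference. One cosmetic point: the comparison constant in your displayed inequality should be a fixed power of $C_\lz$ (roughly $C_\lz^{3(1-\az)}$, from applying \eqref{1.2} across one factor of $6$) rather than $C_\lz^{1-\az}$, but this does not affect the structure of the proof or the stated dependence of the constants.
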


\begin{lemma}\label{l3.8}
For any $\az\in [0,1)$, there exists some positive constant
$\wz{P_{\az}}$ (large enough), depending on $C_{\lz}$, $\bz_6$ as in \eqref{1.2} with
$\eta=6$ and
$\az$, such that, if $x\in \cx$ is some fixed point and $\{f_{B}\}_{B\ni x}$
is a collection of numbers such that $|f_{B}-f_{S}|\le
{\wz K}_{B,S}^{(\az)}C_{x}$ for all doubling balls $B\st S$ with $x\in B$
satisfying ${\wz K}_{B,S}^{(\az)}\le \wz{P_{\az}}$, then there exists a positive
constant $C$, depending on $C_{\lz}$, $\bz_6$, $\az$ and $\wz{P_{\az}}$, such that
$|f_{B}-f_{S}|\le C_4{\wz K}_{B,S}^{(\az)}C_{x}$ for all doubling
balls $B\st S$ with $x\in B$, where $C_{x}$ is a positive constant, depending
on $x$, and $C_4$ a positive constant depending only on $C_\lz$, $\bz_6$ and $\az$.
\end{lemma}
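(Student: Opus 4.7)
The plan is a chaining argument linking $B$ to $S$ through concentric doubling balls. First, I would fix $x\in\cx$ and doubling balls $B\st S$ with $x\in B$, and assume $\wz{K}_{B,S}^{(\az)}>\wz{P_\az}$ (otherwise the conclusion is immediate from the hypothesis). The core of the proof is to construct a finite chain of concentric doubling balls $B=B^{(0)}\subsetneq B^{(1)}\subsetneq\cdots\subsetneq B^{(m)}$, all with the same center as $B$ (hence all containing $x$), such that $S\st B^{(m)}$ with $r_{B^{(m)}}$ comparable to $r_S$ and, for every $i\in\{0,\ldots,m-1\}$,
\[
P_\az<\wz{K}_{B^{(i)},B^{(i+1)}}^{(\az)}\le \wz{P_\az},
\]
where $P_\az$ is the constant from Lemma \ref{l3.7}. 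Such a chain I would build greedily: given $B^{(i)}$, take $B^{(i+1)}$ to be the \emph{largest} doubling ball concentric with $B$, properly containing $B^{(i)}$, with $\wz{K}^{(\az)}$-coefficient at most $\wz{P_\az}$. The existence of arbitrarily large concentric doubling balls among the dilates $\{6^jB\}_{j\in\nn}$ is guaranteed by Hyt\"onen's Lemma~3.2, and the greedy maximality together with Lemma \ref{l3.4}(iv) forces the lower bound $P_\az$ on each coefficient provided $\wz{P_\az}$ is chosen large enough relative to $P_\az$, $C_\lz$ and $\bz_6$.

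Once the chain is in hand, I would apply the hypothesis to each consecutive pair $B^{(i)}\st B^{(i+1)}$, obtaining $|f_{B^{(i)}}-f_{B^{(i+1)}}|\le \wz{K}_{B^{(i)},B^{(i+1)}}^{(\az)}C_x$. Summing by the triangle inequality and invoking Lemma \ref{l3.7} (applicable since the chain is concentric and each coefficient exceeds $P_\az$) yields
\[
|f_B-f_{B^{(m)}}|\le C\,\wz{K}_{B,B^{(m)}}^{(\az)}\,C_x.
\]
Since $r_{B^{(m)}}\sim r_S$, Lemma \ref{l3.4}(ii) gives $\wz{K}_{S,B^{(m)}}^{(\az)}\ls 1\le \wz{P_\az}$, so one further application of the hypothesis to the doubling pair $(S,B^{(m)})$ (both contain $x$) produces $|f_S-f_{B^{(m)}}|\ls C_x$. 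Combining these via Lemma \ref{l3.4}(iv),
\[
\wz{K}_{B,B^{(m)}}^{(\az)}\le \wz{K}_{B,S}^{(\az)}+c\,\wz{K}_{S,B^{(m)}}^{(\az)}\ls \wz{K}_{B,S}^{(\az)},
\]
since $\wz{K}_{B,S}^{(\az)}\ge 1$, and the triangle inequality delivers the desired bound $|f_B-f_S|\le C_4\wz{K}_{B,S}^{(\az)}C_x$ with $C_4$ depending only on $C_\lz$, $\bz_6$ and $\az$.

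The main obstacle I expect is the quantitative construction of the chain: showing that, for $\wz{P_\az}$ suitably large, the greedy procedure indeed produces a strictly larger doubling ball at each step and, simultaneously, that no two consecutive coefficients can drop below $P_\az$. This is where the interplay among $C_\lz$, $\bz_6$, $P_\az$ and $\wz{P_\az}$ has to be pinned down carefully, using the monotonicity and sub-additivity properties of $\wz{K}^{(\az)}$ collected in Lemma \ref{l3.4}, the geometric growth of $\{6^jB\}_j$, and Hyt\"onen's Lemma~3.2 on the existence of doubling dilates. All subsequent arithmetic — the triangle inequalities and the final coefficient comparisons — is then routine given these ingredients.
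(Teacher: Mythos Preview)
Your proposal is essentially correct and follows the same chaining argument as the reference the paper cites (Chen--Sawyer, Lemma~6; the paper itself omits the details). The key ingredients you identify --- a concentric chain of doubling dilates $6^jB$, the lower bound $P_\az$ on consecutive coefficients via maximality and Lemma~\ref{l3.4}(iv), the summation via Lemma~\ref{l3.7}, and one final jump to $S$ --- are exactly the standard ones.

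One point deserves slightly more care than you indicate. Your greedy step ``take $B^{(i+1)}$ to be the \emph{largest} doubling dilate with $\wz K^{(\az)}_{B^{(i)},\,\cdot}\le\wz P_\az$'' can overshoot $S$ by an uncontrolled factor at the last step, so the claim ``$r_{B^{(m)}}$ comparable to $r_S$'' does not follow directly from the construction as stated. The usual remedy is to cap the chain at the fixed ball $\wz R_0:=\wz{6^{N_{B,S}+1}B}$ (the smallest doubling dilate of $B$ whose radius exceeds $r_S$): one checks via Lemma~\ref{l3.4}(ii)--(iv) that both $\wz K^{(\az)}_{S,\wz R_0}$ and $\wz K^{(\az)}_{B^{(m)},\wz R_0}$ are bounded by absolute constants (hence $\le\wz P_\az$ once $\wz P_\az$ is chosen large), so the hypothesis handles the two remaining jumps $B^{(m)}\to\wz R_0$ and $S\to\wz R_0$. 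With this adjustment the argument closes; everything else in your outline is accurate.
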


The following theorem is adapted from \cite[Theorem 1]{cs}.

\begin{theorem}\label{t3.9}
Let $b\in \rbmo$ and $T_{\az}$ for $\az\in (0,1)$ be as in \eqref{1.9}
with kernel $K_\az$ satisfying \eqref{1.7} and \eqref{1.8},
which is bounded from $\lp$ into $\lq$ for all $p\in (1,1/\az)$
and $1/q=1/p-\az$. Then the commutator
$[b,T_{\az}]$ satisfies that there exists a positive constant $C$ such that,
for all $f\in\lp$,
$\|[b,T_{\az}]f\|_{L^{q}(\mu)}\le C\|b\|_{\rbmo}\|f\|_{L^{p}(\mu)}.$
\end{theorem}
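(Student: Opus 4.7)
The plan is to follow the sharp maximal function approach of Chen--Sawyer \cite{cs}, adapted to the non-homogeneous metric measure setting via the fractional coefficient $\wz K^{(\az)}_{B,S}$ and the sharp maximal operator $\wz M^{\#,\az}$ of Definitions \ref{d3.3} and \ref{d3.5}. Combining Lemma \ref{l3.6} with Lemma \ref{l2.5}(ii), it will suffice to establish the pointwise estimate
\begin{equation*}
\wz M^{\#,\az}([b,T_\az]f)(x)\ls \|b\|_{\rbmo}\lf\{M_{r,5}^{(\az)}f(x)+M_{r,5}(T_\az f)(x)\r\}
\end{equation*}
for some $r\in(1,1/\az)$, and then to appeal to Lemma \ref{l3.1}, Remark \ref{r3.2}, Lemma \ref{l2.5}(i), and the hypothesized $(\lp,\lq)$-boundedness of $T_\az$ (choosing $r$ with $r<p$) to conclude. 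The starting identity is, for any ball $B\ni x$ and any constant $b_B\in\cc$,
\begin{equation*}
[b,T_\az]f=(b-b_B)\,T_\az f-T_\az((b-b_B)f),
\end{equation*}
which will be applied with $b_B:=m_{\wz B}(b)$.

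First I would bound the mean-oscillation part of $\wz M^{\#,\az}$. For a fixed ball $B\ni x$, split $f=f_1+f_2:=f\chi_{(6/5)B}+f\chi_{\cx\setminus(6/5)B}$ and choose
\begin{equation*}
h_B:=m_B\lf(T_\az((b-b_B)f_2)\r).
\end{equation*}
The contribution of $(b-b_B)T_\az f$ is controlled via the generalized H\"older inequality of \cite{fyy} together with Corollary \ref{c2.3}, producing $\|b\|_{\rbmo}M_{r,5}(T_\az f)(x)$. The contribution of $T_\az((b-b_B)f_1)$ is handled by Kolmogorov's inequality, the assumed weak-type bound (implied by the hypothesized strong boundedness), H\"older's inequality, and Corollary \ref{c2.3}, yielding $\|b\|_{\rbmo}M^{(\az)}_{r,5}f(x)$. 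The contribution of $T_\az((b-b_B)f_2)-h_B$ is estimated using the regularity condition \eqref{1.8} on $K_\az$, expanding the difference along annuli $2^{i+1}(6B)\setminus 2^i(6B)$, and using \eqref{1.2}, \eqref{1.3}, and the John--Nirenberg type estimate \eqref{2.2} for $\rbmo$; a geometric series in $2^{-i\dz}$ absorbs the decay and leaves $\|b\|_{\rbmo}\{M^{(\az)}_{r,5}f(x)+M_{r,5}(T_\az f)(x)\}$.

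Next I would bound the chain part $|h_Q-h_R|/\wz K^{(\az)}_{Q,R}$ for doubling $Q\st R$ containing $x$. Writing
\begin{equation*}
h_Q-h_R=m_Q(T_\az((b-b_Q)f\chi_{\cx\setminus(6/5)Q}))-m_R(T_\az((b-b_R)f\chi_{\cx\setminus(6/5)R}))
\end{equation*}
and decomposing $\cx=(6/5)Q\cup[(6/5)R\setminus(6/5)Q]\cup[\cx\setminus(6/5)R]$, the ``near'' pieces are estimated as in the previous paragraph and produce $M^{(\az)}_{r,5}f(x)+M_{r,5}(T_\az f)(x)$ times a constant of order $\wz K^{(\az)}_{Q,R}$ using \eqref{1.7}, the growth of $\mu$ along dyadic annuli, and the definition of $\wz K^{(\az)}_{Q,R}$ in Definition \ref{d3.3}; the properties in Lemma \ref{l3.4} guarantee that the chain in the integral telescope sums correctly, and Lemma \ref{l3.7} prevents accumulation. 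The ``far'' piece $\cx\setminus(6/5)R$ is handled via \eqref{1.8}, exactly as in the mean-oscillation part. To reduce to doubling balls whose $\wz K^{(\az)}$ coefficient does not exceed the threshold $\wz P_\az$, I would invoke Lemma \ref{l3.8}, so that only the ``mild'' case $\wz K^{(\az)}_{Q,R}\le \wz P_\az$ needs to be checked directly.

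The main obstacle will be the fractional chain estimate for the $h_Q-h_R$ term: the $\rbmo$-oscillation $|b_Q-b_R|$ produces a factor $K_{Q,R}$ (Definition \ref{d1.8}) while the kernel size produces powers of $\mu(6^k Q)/\lz(c_Q,6^k r_Q)$, and one must reassemble the product cleanly into $\wz K^{(\az)}_{Q,R}$. This is precisely where the non-homogeneous version of \cite[Lemma 3]{cs}, encoded in Lemma \ref{l3.4}, together with the truncation device of Lemma \ref{l3.8} and the summation lemma \ref{l3.7}, is essential; without the full strength of these tools the integrals along the chain of balls between $Q$ and $R$ would grow faster than $\wz K^{(\az)}_{Q,R}$. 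Once this pointwise sharp-maximal bound is in hand, the $(\lp,\lq)$-boundedness of $[b,T_\az]$ follows immediately from Lemma \ref{l3.6}, Lemma \ref{l3.1}, Remark \ref{r3.2}, Lemma \ref{l2.5}(i), and the hypothesis on $T_\az$.
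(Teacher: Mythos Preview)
Your approach is essentially the paper's: the Chen--Sawyer sharp-maximal-function method, adapted via $\wz K^{(\az)}_{B,S}$ and $\wz M^{\#,\az}$, reducing the commutator bound to a pointwise sharp-maximal estimate and then invoking Lemmas \ref{l3.6}, \ref{l3.1} and \ref{l2.5}.

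Two details differ from the paper and you should be aware of them. First, the paper's pointwise estimate (equation \eqref{3.3}) carries an additional term $T_\az(|f|)(x)$ that you omit. It appears in the chain estimate: after writing $b-b_Q=(b-b_R)+(b_R-b_Q)$, the constant piece gives $|b_Q-b_R|\cdot m_Q(|T_\az(f\chi_{\cx\setminus 6Q})|)$, and the paper bounds the latter average by $T_\az(|f|)(x)+M^{(\az)}_{p,5}f(x)$ rather than by the two maximal functions alone. This extra term is harmless for the conclusion, since $\|T_\az(|f|)\|_{\lq}\ls\|f\|_{\lp}$ by hypothesis. Second, the paper's chain bound \eqref{3.5} is $K_{Q,R}\,\wz K^{(\az)}_{Q,R}$ times the maximal quantities, not $\wz K^{(\az)}_{Q,R}$ alone; the factor $K_{Q,R}$ is produced by $|b_Q-b_R|$ and by $|b_{6^{k+1}Q}-b_R|$ in the annular sum, while $\wz K^{(\az)}_{Q,R}$ comes from summing $[\mu(6^kQ)/\lz(c_Q,6^kr_Q)]^{1-\az}$. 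Since $K_{Q,R}\ls\wz K^{(\az)}_{Q,R}$, one first restricts to pairs with $\wz K^{(\az)}_{Q,R}\le\wz P_\az$ (so $K_{Q,R}$ is bounded) and then applies Lemma \ref{l3.8} exactly as you describe. With these two adjustments your sketch coincides with the paper's proof.
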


\begin{proof}
The case that $\mu(\cx)<\fz$ can be proved by a way similar to the proof of
\cite[Theorem 3.10]{fyy}. Thus, without loss of generality, we may assume that
$\mu(\cx)=\fz$. Let $p\in (1,1/\az)$.
We first claim that, for all $r\in(1,\fz)$, $f\in\lp$ and $x\in \cx$,  \begin{equation}\label{3.3}
{\wz M}^{\#,\,\az}([b,T_{\az}]f)(x)\ls
\|b\|_{\rbmo}\lf\{M_{r,5}^{(\az)}f(x)+M_{r,6}(T_{\az}f)(x)
+T_{\az}(|f|)(x)\r\}.
\end{equation}
Once \eqref{3.3} is proved, taking $1<r<p<1/\az$,
by Lemma \ref{l2.5}(ii), Lemma \ref{l3.6}, an argument similar
to that used in the proof of \cite[Theorem 3.10]{fyy},
and Remark \ref{r3.2}, we conclude that
\begin{eqnarray*}
\|[b,T_{\az}]f\|_{L^{q}(\mu)}&&\le
\|N([b,T_{\az}]f)\|_{L^{q}(\mu)}\ls
\|{\wz M}^{\#,\,\az}([b,T_{\az}]f)\|_{L^{q}(\mu)}\\
&&\ls
\|b\|_{\rbmo}\lf\{\|M_{r,5}^{(\az)}f\|_{L^{q}(\mu)}
+\|M_{r,6}(T_{\az}f)\|_{L^{q}(\mu)}+\|T_{\az}f\|_{L^{q}(\mu)}\r\}\\
&&\ls \|b\|_{\rbmo}\|f\|_{L^{p}(\mu)},
\end{eqnarray*}
which is just the desired conclusion.

To show \eqref{3.3}, by Definition \ref{d1.9}, there exists
a family of numbers, $\{b_{Q}\}_{Q}$, such that, for any ball $Q$,
$$\int_{Q}|b(y)-b_{Q}|\,d\mu(y)\ls \mu(6Q)\|b\|_{\rbmo}$$
and, for all balls $Q$, $R$ with $Q\st R$,
$|b_{Q}-b_{R}|\ls K_{Q,R}\|b\|_{\rbmo}.$
For any ball $Q$, let
$$h_{Q}:=m_{Q}(T_{\az}([b-b_{Q}]f\chi_{\cx\setminus (6/5)Q})).$$
Next we show that, for all $x$ and $Q$ with $Q\ni x$,
\begin{equation}\label{3.4}
\frac{1}{\mu(6Q)}\int_{Q}|[b,T_{\az}]f(y)-h_{Q}|\,d\mu(y)\ls
\|b\|_{\rbmo}\lf\{M_{p,5}^{(\az)}f(x)+M_{p,6}(T_{\az}f)(x)\r\}
\end{equation}
and, for all balls $Q$, $R$ with $Q\st R$ and $Q\ni x$,
\begin{equation}\label{3.5}
|h_{Q}-h_{R}|\ls
\|b\|_{\rm{RBMO}}(\mu)\lf\{M_{p,5}^{(\az)}f(x)+T_\az (|f|)(x)\r\}
K_{Q,R}{\wz K}_{Q,R}^{(\az)}.
\end{equation}

To prove \eqref{3.4}, for a fixed ball $Q$ and $x$ with $x\in Q$, we write
$[b,T_{\az}]f$ as
\begin{equation}\label{3.6}
[b,T_{\az}]f=[b-b_{Q}]T_{\az}f-T_{\az}([b-b_{Q}]f_{1})
-T_{\az}([b-b_{Q}]f_{2}),
\end{equation}
where $f_{1}:=f\chi_{(6/5)Q}$ and $f_{2}:=f-f_{1}$.

Let us first estimate the term $[b-b_{Q}]T_{\az}f$. By H\"older's
inequality and \cite[Corollary 6.3]{h10}, we see that
\begin{eqnarray}\label{3.7}
&&\frac{1}{\mu(6Q)}\int_{Q}|[b(y)-b_{Q}]T_{\az}f(y)|\,d\mu(y)\\
\noz &&\hs\le
\lf[\frac{1}{\mu(6Q)}\int_{Q}|b(y)-b_{Q}|^{p'}\,d\mu(y)\r]^{1/p'}
\lf[\frac{1}{\mu(6Q)}\int_{Q}|T_{\az}f(y)|^{p}\,d\mu(y)\r]^{1/p}\\
\noz &&\hs\ls \|b\|_{\rbmo}M_{p,6}(T_{\az}f)(x),
\end{eqnarray}
which is desired.

To estimate $T_\az([b-b_Q]f_1)$, take $s:=\sqrt{p}$ and $1/r:=1/s-\az$. From H\"older's
inequality, the $(L^s(\mu),L^r(\mu))$-boundedness of $T_\az$ and \cite[Corollary 6.3]{h10},
it follows that
\begin{eqnarray}\label{3.8}
&&\frac{1}{\mu(6Q)}\int_{Q}|T_{\az}([b-b_{Q}]f_{1})(y)|\,d\mu(y)\\
&&\noz\hs\le
\frac{[\mu(Q)]^{1-1/r}}{\mu(6Q)}\|T_{\az}([b-b_{Q}]f_{1})\|_{L^{r}(\mu)}
\ls \frac{[\mu(Q)]^{1-1/r}}{\mu(6Q)}\|(b-b_{Q})f_{1}\|_{L^{s}(\mu)}\\
&&\noz\hs\ls
\frac{1}{[\mu(6Q)]^{1/r}}\lf\{\int_{(6/5)Q}|b(y)-b_{Q}|^{ss'}
\,d\mu(y)\r\}^{\frac{1}{ss'}}\lf[\int_{(6/5)Q}|f(y)|^{p}\,d\mu(y)\r]^{\frac{1}{p}}\\
&&\noz\hs\ls \|b\|_{\rbmo}M_{p,5}^{(\az)}f(x),
\end{eqnarray}
which is desired.

By \eqref{3.6}, \eqref{3.7} and \eqref{3.8}, to obtain \eqref{3.4}, we still need to estimate the difference
$|T_{\az}([b-b_{Q}]f_{2})-h_{Q}|$ by writing that, for all $y_1,\,y_2\in Q$,
\begin{eqnarray*}
&&\lf|T_{\az}([b-b_{Q}]f_{2})(y_{1})-T_{\az}([b-b_{Q}]f_{2})(y_{2})\r|\\
&&\hs\ls\int_{6Q\setminus
(6/5)Q}|K_\az(y_1,z)-K_\az(y_2,z)|
|b(z)-b_{Q}||f(z)|\,d\mu(z)
\,d\mu(z)+\int_{\cx\setminus 6Q}\cdots\\
&&\hs=:{\rm I}_1+{\rm I}_2.
\end{eqnarray*}

Let $c_Q$ and $r_Q$ be the center and the radius of $Q$, respectively.
To estimate ${\rm I}_1$, from \eqref{1.7} and H\"older's inequality, together with
\eqref{1.2} and \eqref{1.3}, it follows that
\begin{eqnarray*}
{\rm I}_1&&\ls\int_{6Q\setminus (6/5)Q}\lf(\frac1{[\lz(y_1,d(y_1,z))]^{1-\az}}
+\frac1{[\lz(y_2,d(y_2,z))]^{1-\az}}\r)|f(z)||b(z)-b_Q|\,d\mu(z)\\
&&\ls\lf[\frac1{\mu(30Q)}\int_{6Q}|b(z)-b_Q|^{p'}\,d\mu(z)\r]^{1/p'}
\lf\{\frac1{[\mu(30Q)]^{1-\az p}}\int_{6Q}|f(z)|^p\,d\mu(z)\r\}^{1/p}\\
&&\ls\|b\|_{\rbmo}M^\az_{p,5}f(x),
\end{eqnarray*}
which is desired.

For any $y_{1},y_{2}\in Q$, by \eqref{1.8},
\eqref{1.3}, \eqref{1.2}, H\"older's inequality and \cite[Corollary 6.3]{h10},
we know that
\begin{eqnarray*}
{\rm I}_2&&\ls \int_{\cx\setminus
6Q}\frac{[d(y_{1},y_{2})]^{\dz}}{[d(y_{1},z)]^{\dz}
[\lz(y_{1},d(y_{1},z))]^{1-\az}}
|b(z)-b_{Q}||f(z)|\,d\mu(z)\\
&&\hs\ls
\sum_{k=1}^{\fz}\int_{2^{k}(6Q)\setminus
2^{k-1}(6Q)}
\frac{(2r_{Q})^{\dz}}{[2^{k-1}\times6r_{Q}]^{\dz}}
\frac{1}{[\lz(c_{Q},2^{k-1}\times6r_{Q})]^{1-\az}}|b(z)-b_{Q}||f(z)|\,d\mu(z)\\
\noz &&\hs\ls
\sum_{k=1}^{\fz}2^{-k\dz}\frac{1}{[\mu(2^{k}\times
30Q)]^{1-\az}}\lf[\int_{2^{k}(6Q)}|b(z)-b_{2^{k}(6Q)}||f(z)|\,d\mu(z)\r.\\
&&\lf.\hs\hs+k\|b\|_{\rbmo}
\int_{2^{k}(6Q)}|f(z)|\,d\mu(z)\r]\\
&&\hs\ls
\sum_{k=1}^{\fz}2^{-k\dz}\lf(\lf[\frac{1}{\mu(2^{k}\times
30Q)}\int_{2^{k}(6Q)}|b(z)-b_{2^{k}(6Q)}|^{p'}
\,d\mu(z)\r]^{\frac{1}{p'}}\r.\\
&&\lf.\hs\hs\times\lf\{\frac{1}{[\mu(2^{k}\times 30Q)]^{1-\az
p}}\int_{2^{k}(6Q)}|f(z)|^{p}\,d\mu(z)\r\}^{1/p}\r.\\
&&\lf.\hs\hs +k\|b\|_{\rbmo}\lf\{\frac{1}{[\mu(2^{k}\times 30Q)]^{1-\az
p}}\int_{2^{k}(6Q)}|f(z)|^{p}\,d\mu(z)\r\}^{1/p}\r)\\
&&\hs\ls
\sum_{k=1}^{\fz}(k+1)2^{-k\dz}\|b\|_{\rbmo}M_{p,5}^{(\az)}f(x)
\ls \|b\|_{\rbmo}M_{p,5}^{(\az)}f(x),
\end{eqnarray*}
where we used the fact that
$$|b_{Q}-b_{2^{k}(6/5)Q}|\ls K_{Q,2^{k}(6Q)}\|b\|_{\rbmo}
\ls k\|b\|_{\rbmo}.$$

Combining the estimates for ${\rm I}_1$ and ${\rm I}_2$, we see that, for all $y\in Q$,
$$|T_{\az}([b-b_{Q}]f_{2})(y)-h_{Q}|\ls \|b\|_{\rbmo}M_{p,5}^{(\az)}f(x).$$
Thus,
$$\frac{1}{\mu(6Q)}\int_{Q}|T_{\az}([b-b_{Q}]f_{2})(y)-h_{Q}|\,d\mu(y)
\ls \|b\|_{\rbmo}M_{p,5}^{(\az)}f(x),$$
which, together with \eqref{3.6}, \eqref{3.7} and \eqref{3.8}, implies \eqref{3.4}.

Now we show the regularity condition \eqref{3.5} for the
numbers $\{h_{Q}\}_{Q}$. Consider two balls $Q\st R$ with $x\in
Q$ and let $N:=N_{Q,R}+1$. Write $|h_{Q}-h_{R}|$ as
\begin{eqnarray*}
&&|m_{Q}(T_{\az}([b-b_{Q}]f\chi_{\cx\setminus
(6/5)Q}))-m_{R}(T_\az([b-b_{Q}]f\chi_{\cx\setminus (6/5)R}))|\\
&&\hs\le |m_{Q}(T_{\az}([b-b_{Q}]f\chi_{6Q\setminus (6/5)Q}))|
+|m_{Q}(T_{\az}([b_{Q}-b_{R}]f\chi_{\cx\setminus 6Q}))|\\
&&\hs\hs+|m_{Q}(T_{\az}([b-b_{R}]f\chi_{6^{N}Q\setminus 6Q}))|
+|m_{Q}(T_{\az}([b-b_{R}]f\chi_{\cx\setminus 6^{N}Q}))\\
&&\hs\hs-m_{R}(T_{\az}([b-b_{R}]f\chi_{\cx\setminus 6^{N}Q}))|
+|m_{R}(T_{\az}([b-b_{R}]f\chi_{6^{N}Q\setminus (6/5)R}))|\\
&&\hs =:\mathrm{U_{1}}+\mathrm{U_{2}}+\mathrm{U_{3}}+\mathrm{U_{4}}
+\mathrm{U_{5}}.
\end{eqnarray*}

Following the proof of \cite[Theorem 1]{cs}, it is easy to see that
$$\mathrm{U_{1}}+\mathrm{U_{4}}+\mathrm{U_{5}}\ls \|b\|_{\rbmo}
M_{p,5}^{(\az)}f(x)$$
and
$\mathrm{U_{2}}\ls K_{Q,R}\|b\|_{\rbmo}[T_{\az}(|f|)(x)+M_{p,5}^{(\az)}f(x)].$

Now we turn to the estimate for $\mathrm{U_{3}}$. For $y\in Q$,
by \eqref{1.7} and H\"older's inequality, we conclude that
\begin{eqnarray*}
&&|T_{\az}([b-b_{R}]f\chi_{6^{N}Q\setminus 6Q})(y)|\\
&&\hs\ls \sum_{k=1}^{N-1}\frac{1}{[\lz(x_{Q},6^{k}r_{Q})]^{1-\az}}
\int_{6^{k+1}Q\setminus 6^{k}Q}|b(y)-b_{R}||f(y)|\,d\mu(y)\\
&&\hs\ls
\sum_{k=1}^{N-1}\frac{1}{[\lz(x_{Q},6^{k}r_{Q})]^{1-\az}}
\lf[\int_{6^{k+1}Q}|b(y)-b_{R}|^{p'}\,d\mu(y)\r]^{1/p'}
\lf[\int_{6^{k+1}Q}|f(y)|^{p}\,d\mu(y)\r]^{1/p}.
\end{eqnarray*}
Notice that, by Minkowski's inequality and Lemma \ref{l2.1}(i), we see that
\begin{eqnarray*}
&&\lf[\int_{6^{k+1}Q}|b(y)-b_{R}|^{p'}\,d\mu(y)\r]^{1/p'}\\
&&\hs\le
\lf[\int_{6^{k+1}Q}|b(y)-b_{6^{k+1}Q}|^{p'}\,d\mu(y)\r]^{1/p'}
+\lf[\mu(6^{k+1}Q)\r]^{1/p'}|b_{6^{k+1}Q}-b_{R}|\\
&&\hs\ls
K_{Q,R}\|b\|_{\rbmo}\lf[\mu(5\times6^{k+1}Q)\r]^{1/p'}.
\end{eqnarray*}
Thus, by \eqref{1.7}, \eqref{1.3} and \eqref{1.2}, we conclude that
\begin{eqnarray*}
&&|T_{\az}([b-b_{R}]f\chi_{6^{N}Q\setminus 6Q})(y)|\\
&&\hs\ls K_{Q,R}\|b\|_{\rbmo}\sum_{k=1}^{N-1}
\frac{[\mu(5\times
6^{k+1}Q)]^{1-1/p}}{[\lz(x_{Q},6^{k}r_{Q})]^{1-\az}}
\lf[\int_{6^{k+1}Q}|f(y)|^{p}\,d\mu(y)\r]^{1/p}\\
&&\hs\ls
K_{Q,R}\|b\|_{\rbmo}\sum_{k=1}^{N_{Q,R}}
\lf[\frac{\mu(6^{k+2}Q)}{\lz(x_{Q},6^{k}r_{Q})}\r]^{1-\az}\\
&&\hs\hs\times\lf\{\frac{1}{[\mu(5\times 6^{k+1}Q)]^{1-\az
p}}\int_{6^{k+1}Q}|f(y)|^{p}\,d\mu(y)\r\}^{1/p}
\ls K_{Q,R}{\wz K}_{Q,R}^{(\az)}\|b\|_{\rbmo}M_{p,5}^{(\az)}f(x).
\end{eqnarray*}
Taking the mean over $Q$, we obtain
$\mathrm{U_{3}}\ls K_{Q,R}{\wz K}_{Q,R}^{(\az)}\|b\|_{\rbmo}M_{p,5}^{(\az)}f(x),$
which, together with the estimates $\mathrm{U_{1}}$, $\mathrm{U_{2}}$,
$\mathrm{U_{4}}$ and $\mathrm{U_{5}}$, further implies \eqref{3.5}.

By \eqref{3.4}, if $Q$ is a doubling ball and
$x\in Q$, we have
\begin{equation}\label{3.9}
|m_{Q}([b,T_{\az}]f)-h_{Q}|\ls
\|b\|_{\rbmo}\lf[M_{p,5}^{(\az)}f(x)+M_{p,6}(T_{\az}f)(x)\r].
\end{equation}
Since, for any ball $Q$ with $x\in Q$, $K_{Q,{\wz Q}}\le C$ and
${\wz K}_{Q,{\wz Q}}^{(\az)}\le C$, by \eqref{3.4},
\eqref{3.5} and \eqref{3.9}, we see that
\begin{eqnarray}\label{3.10}
&&\frac{1}{\mu(6Q)}\int_{Q}|[b,T_{\az}]f(y)-m_{\wz Q}([b,T_{\az}]f)|\,d\mu(y)\\
\noz &&\hs\le
\frac{1}{\mu(6Q)}\int_{Q}|[b,T_{\az}]f(y)-h_{Q}|\,d\mu(y)+|h_{Q}-h_{\wz{Q}}|
+|h_{\wz{Q}}-m_{\wz{Q}}([b,T_{\az}]f)|\\
\noz &&\hs\ls
\|b\|_{\rbmo}\lf\{M_{p,5}^{(\az)}f(x)+M_{p,6}(T_{\az}f)(x)
+T_{\az}(|f|)(x)\r\}.
\end{eqnarray}
On the other hand, for all doubling balls $Q\st R$ with $x\in Q$
such that ${\wz K}_{Q,R}^{(\az)}\le \wz{P_{\az}}$, where
$\wz{P_{\az}}$ is the constant as in Lemma \ref{l3.8},
by \eqref{3.5}, we have
$$|h_{Q}-h_{R}|\ls K_{Q,R}\|b\|_{\rbmo}\lf[M_{p,5}^{(\az)}f(x)
+T_{\az}(|f|)(x)\r]\wz{P_{\az}}.$$
Hence, by Lemma \ref{l3.8}, we know that, for all doubling balls $Q\st R$ with
$x\in Q$,
$$|h_{Q}-h_{R}|\ls {\wz K}_{Q,R}^{(\az)}\|b\|_{\rbmo}\lf[M_{p,5}^{(\az)}f(x)
+T_{\az}(|f|)(x)\r]$$
and, using \eqref{3.9}, we further obtain
\begin{eqnarray*}
&&|m_{Q}([b,T_{\az}]f)-m_{R}([b,T_{\az}]f)|\\
&&\hs\ls
{\wz K}_{Q,R}^{(\az)}\|b\|_{\rbmo}\lf\{M_{p,5}^{(\az)}f(x)
+M_{p,6}(T_{\az}f)(x)
+T_{\az}(|f|)(x)\r\},
\end{eqnarray*}
which, together with \eqref{3.10}, induces \eqref{3.3} and hence
completes the proof of Theorem \ref{t3.9}.
\end{proof}

To prove Theorem \ref{t1.15}, we need to recall some notation from \cite{hmy1}.
Let $C^k_i$ be as in Section \ref{s1}.
For any sequence $\vec{b}:=(b_{1},\ldots,b_{k})$ of functions and
all $i$-tuples $\sz:=\{\sz(1),\ldots,\sz(i)\}\in C_{i}^{k}$, let
$\vec b_\sz:=(b_{\sz(1)},\ldots, b_{\sz(i)})$ and
$$\|\vec{b}_{\sz}\|_{\rbmo}:=\prod_{j=1}^i\|b_{\sz(i)}\|_{\rbmo}.$$
For any $\sz\in
C_{i}^{k}$ and $z\in\cx$, let
$$\lf[m_{\wz B}(\vec{b})-\vec{b}(z)\r]_{\sz}:=\prod_{j=1}^i\lf[m_{\wz B}
(b_{\sz(j)})-b_{\sz(j)}(z)\r]$$
and
$T_{\az,\vec{b}_{\sz}}:=[b_{\sz(i)},[b_{\sz(i-1)},
\cdots,[b_{\sz(1)},T_\az]\cdots]].$
In particular, when $\sz:=\{1,\ldots,k\}$,
$T_{\az,\vec{b}_{\sz}}$ coincides with $T_{\az,\vec{b}}$ as in \eqref{1.11}.

Now we are ready to prove Theorem \ref{t1.15}.

\begin{proof}[Proof of Theorem \ref{t1.15}]
By Lemma \ref{l2.4}, it suffices
to prove that $T_{\az,\vec b}$
is bounded from $\lp$ into $\lq$ for all $p\in (1,1/\az)$ and $1/q=1/p-\az$.
We show this by induction on $k$.

By Theorem \ref{t3.9}, the conclusion is valid for
$k=1$.
Now assume that $k\ge 2$ is an integer and, for any $i\in
\{1,\ldots,k-1\}$ and any subset $\sz
=\{\sz(1),\ldots,\sz(i)\}$ of $\{1,\ldots,k-1\}$,
$T_{\az,\vec{b}_{\sz}}$ is bounded from $\lp$ to
$\lq$ for the same $p,\,q$ as those such that $T_\az$
is bounded from $\lp$ to $\lq$.

The case that $\mu(\cx)<\fz$ can be proved by a way similar to that
used in the proof of \cite[Theorem 3.10]{fyy}, the details being omitted.
Thus, without loss of generality, we may assume that
$\mu(\cx)=\fz$.
We first claim that, for any $r\in(1,\fz)$, $f\in\lp$ and $x\in\cx$,
\begin{eqnarray}\label{3.11}
{\wz M}^{\#,\,\az}(T_{\az,\vec{b}}f)(x)
&&\ls \|\vec{b}\|_{\rbmo}\lf[M_{r,6}T_{\az}f(x)+M_{r,5}^{(\az)}f(x)\r]\\
&&\noz\hs+\sum_{i=1}^{k-1}\sum_{\sz\in
C_{i}^{k}}\|\vec{b}_{\sz}\|_{\rbmo}M_{r,6}
(T_{\az,\vec{b}_{\sz'}}f)(x).
\end{eqnarray}

Once \eqref{3.11} is proved, by Lemmas \ref{l2.5} and \ref{l2.6},
an argument similar to that used in the proof of
Theorem \ref{t3.9}, and Remark \ref{r3.2}, we conclude that, for all $p\in(1,1/\az)$, $1/q=1/p-\az$ and $f\in\lp$,
\begin{eqnarray*}
\|T_{\az,\vec{b}}f\|_{\lq}&&\le \|N(T_{\az,\vec{b}}f)\|_{\lq}
\ls\lf\|\widetilde{M}^{\#}(T_{\az,\vec{b}}f)\r\|_{\lq}\\
&&\ls \|\vec{b}\|_{\rbmo}\lf[\|M_{r,6}(Tf)\|_{\lq}
+\|M_{r,5}(f)\|_{\lq}\r]\\
&&\hs+\sum_{i=1}^{k-1}\sum_{\sz\in
C_{i}^{k}}\|\vec{b}_{\sz}\|_{\rbmo}
\|M_{r,6}(T_{\vec{b}_{\sz'}}f)\|_{\lq}\\
&&\ls \|\vec{b}\|_{\rbmo}
\lf[\|Tf\|_{\lq}+\|f\|_{\lp}
+\sum_{i=1}^{k-1}\sum_{\sz\in
C_{i}^{k}}\|T_{\vec{b}_{\sz'}}f\|_{\lq}\r]\\
&&\ls \|\vec{b}\|_{\rbmo}\|f\|_{\lp},
\end{eqnarray*}
which is desired.

As in the proof of \cite[Theorem 2]{hmy1}, to prove \eqref{3.11}, it suffices to
show that, for all $x$ and $B$ with $B\ni x$,
\begin{eqnarray}\label{3.12}
\frac{1}{\mu(6B)}\int_{B}|T_{\az,\vec{b}}f(y)-h_{B}|\,d\mu(y)
&&\ls\|\vec{b}\|_{\rbmo}\lf[M_{r,6}(T_{\az}f)(x)+M_{r,5}^{(\az)}f(x)\r]\\
\noz&&\hs+\sum_{i=1}^{k-1}\sum_{\sz\in
 C_{i}^{k}}\|\vec{b}_\sz\|_{\rbmo}M_{r,6}(T_{\az,\vec{b}_{\sz'}}f)(x)
\end{eqnarray}
and, for an arbitrary ball $Q$, a doubling ball $R$ with $Q\st R$ and $x\in Q$, \begin{eqnarray}\label{3.13}
|h_{Q}-h_{R}| &&\ls\lf[{\wz K}_{Q,R}\r]^{k}{\wz K}^{(\az)}_{Q,R}
\Bigg\{\|\vec{b}\|_{\rbmo}\{M_{r,6}T_{\az}f(x)
+M_{r,5}^{(\az)}f(x)\}\\
\noz&&\lf.\hs+\sum_{i=1}^{k-1}\sum_{\sz\in
C_{i}^{k}}\|\vec{b}_{\sz}\|_{\rbmo}
M_{r,6}(T_{\az,\vec{b}_{\sz'}}f)(x)\r\},
\end{eqnarray}
where
$$h_{Q}:=m_{Q}\lf(T_{\az}\lf(\prod_{i=1}^k[m_{\wz{Q}}(b_{i})-b_{i}]
f\chi_{\cx\setminus \frac{6}{5}Q}\r)\r)$$ and
$$h_{R}:=m_{R}\lf(T_{\az}\lf(\prod_{i=1}^k[m_{R}(b_{i})-b_{i}]
f\chi_{\cx\setminus \frac{6}{5}R}\r)\r).$$

Let us first prove \eqref{3.12}. With the aid of the
formula that, for all $y,\,z\in\cx$,
\begin{eqnarray*}
\prod_{i=1}^{k}[m_{\wz{Q}}(b_{i})-b_{i}(z)]=\sum_{i=0}^{k}\sum
\limits_{\sz\in C_{i}^{k}}[b(y)-b(z)]_{\sz'}
[m_{\wz{Q}}(b)-b(y)]_{\sz},
\end{eqnarray*}
where, if $i=0$, then
$\sz'=\{1,\ldots,k\}$ and $\sz=\emptyset$,
$[m_{\wz{Q}}(b)-b(y)]_{\emptyset }=1$, it is easy to see that, for all $y\in\cx$,
$$T_{\az,\vec{b}}f(y)=T_{\az}\lf(\prod _{i=1}^{k}[m_{\wz{Q}}(b_{i})-b_{i}]f\r)(y)
-\sum_{i=1}^{k}\sum_{\sz\in
C_{i}^{k}}[m_{\wz{Q}}(b)-b(y)]_{\sz}
T_{\az,\vec{b}_{\sz\prime}}f(y),$$
where, if $i=k$, $T_{\az,\vec{b}_{\sz'}}f(y):=T_{\az}(|f|)(y)$. Therefore,
for all balls $Q\ni x$, we have
\begin{eqnarray*}
&&\frac{1}{\mu(6Q)}\int_{Q}\lf|T_{\az,\vec{b}}f(y)-h_{Q}\r|\,d\mu(y)\\
&&\hs\le\frac{1}{\mu(6Q)}\int_{Q}\lf|T_{\az}\lf(\prod_{i=1}^{k}
[m_{\wz{Q}}(b_{i})-b_{i}]
f\chi_{\frac{6}{5}{Q}}\r)(y)\r|\,d\mu(y)\\
&&\hs\hs+\sum_{i=1}^{k}\sum_{\sz \in C_{i}^{k}}
\frac{1}{\mu(6Q)}\int_{Q}\lf|\lf[m_{\wz{Q}}(b)-b(y)\r]_{\sz
}\r|\lf|T_{\az,\vec{b}_{\sz'}}f(y)\r|\,d\mu(y)\\
&&\hs\hs+\frac{1}{\mu(6Q)}\int_{Q}\lf|T_{\az}\lf(\prod_{i=1}^{k}
\lf[m_{\wz{Q}}(b_{i})-b_{i}\r]
f\chi_{\cx\setminus\frac{6}{5}Q}\r)(y)-h_{Q}\r|\,d\mu(y)
=:\mathrm{I}_1+\mathrm{I}_2+\mathrm{I}_3.
\end{eqnarray*}

Take $1/s^2=1/r-\az$. Using the boundedness of $T_{\az}$ from $L^{s/(1+s\az)}(\mu)$
into $L^s(\mu)$ for $s\in (1,\fz)$ and some arguments
similar to those used in the proofs of
\cite[Theorem 1.1]{hmy1} and \cite[Theorem 1.9]{fyy}, we conclude that, for all $x\in\cx$,
$\mathrm{I}_{1}\ls \|\vec{b}\|_{\rbmo}M_{r,5}^{(\az)}f(x),$
$$\mathrm{I}_{2}\ls
\sum_{i=1}^{k}\sum_{\sz\in
C_{i}^{k}}\|\vec{b}_{\sz}\|_{\rbmo}M_{r,6}
\lf(T_{\az,\vec{b}_{\sz'}}f\r)(x)$$
and $\mathrm{I}_{3}\ls \|\vec{b}_{\sz}\|_{\rbmo}M_{r,5}^{(\az)}f(x),$
which imply \eqref{3.12}.

Now we turn to prove \eqref{3.13}. Let $Q$ be an arbitrary ball and $R$
a doubling ball in $\cx$ such that $x\in Q\st R$.
Denote $N_{Q,R}+1$ simply by $N$. Write
\begin{eqnarray*}
&&|h_Q-h_R|\\
&&\hs\le
\lf|m_{R}\lf[T_{\az}\lf(\prod_{i=1}^{k}\lf[m_{\wz{Q}}(b_{i})-b_{i}\r]
f\chi_{\cx\setminus 6^{N}Q}\r)\r]
-m_{Q}\lf[T_{\az}\lf(\prod_{i=1}^{k}\lf[m_{\wz{Q}}(b_{i})-b_{i}\r]f
\chi_{\cx\setminus 6^{N}Q}\r)\r]\r|\\
&&\hs\hs+\lf|m_{R}\lf[T_{\az}\lf(\prod_{i=1}^{k}\lf[m_{\wz{Q}}(b_{i})-b_{i}\r]
f\chi_{\cx\setminus 6^{N}Q}\r)\r]-m_{R}\lf[T_{\az}\lf(\prod_{i=1}^{k}\lf[m_{R}(b_{i})-b_{i}\r]
f\chi_{\cx\setminus 6^{N}Q}\r)\r]\r|\\
&&\hs\hs+\lf|m_{Q}\lf[T_{\az}\lf(\prod_{i=1}^{k}\lf[m_{\wz{Q}}(b_{i})-b_{i}\r]f
\chi_{6^{N}Q\setminus\frac{6}{5}Q}\r)\r]\r|\\
&&\hs\hs+\lf|m_{R}\lf[T_{\az}\lf(\prod_{i=1}^{k}\lf[m_{R}(b_{i})-b_{i}\r]f
\chi_{6^{N}Q\setminus\frac{6}{5}R}\r)\r]\r|
=:\mathrm{L}_{1}+\mathrm{L}_{2}+\mathrm{L}_{3}+\mathrm{L}_{4}.
\end{eqnarray*}

An estimate similar to that for $\mathrm{I}_{3}$, together with
$K_{Q,R}\ls\wz{K}_{Q,R}$, we see that, for all $x\in\cx$,
$\mathrm{L}_{1}\ls[{\wz K}_{Q,R}]^{k}
\|\vec{b}\|_{RBMO(\mu)}M_{r,5}^{(\az)}f(x).$

By some arguments similar to those used
in the proofs of \cite[Theorem 1.1]{hmy1} and \cite[Theorem 1.9]{fyy},
we easily see that, for all $x\in\cx$,
\begin{eqnarray*}
\mathrm{L}_{2}&&\ls
\lf[{\wz K}_{Q,R}\r]^{k}\lf\{\sum_{i=1}^{k-1}\sum_{\sz\in C_{i}^{k}}
\|\vec{b}_{\sz'}\|_{\rbmo}M_{r,6}\lf(T_{\az,\vec{b}}f(x)\r)\r.\\
&&\hs+\|\vec{b}\|_{\rbmo}M_{r,6}
(T_{\az}f)(x)+\|\vec{b}\|_{\rbmo}M_{r,5}^{(\az)}f(x)\Bigg\},
\end{eqnarray*}
$\mathrm{L}_{3}\ls[{\wz K}_{Q,R}]^k
{\wz K}_{Q,R}^{(\az)}\|\vec{b}\|_{\rbmo}M_{r,5}^{(\az)}f(x)$
and
$\mathrm{L}_{4}\ls \|\vec{b}\|_{\rbmo}M_{r,5}^{(\az)}f(x).$

Combining the estimates for $\mathrm{L}_{1}$, $\mathrm{L}_{2}$,
$\mathrm{L}_{3}$ and $\mathrm{L}_{4}$, we then obtain \eqref{3.13} and hence
complete the proof of Theorem \ref{t1.13}.
\end{proof}

Now we are ready to prove Theorem \ref{t1.19}. In what follows, for any $k\in\nn$
and $i\in\{1,\ldots,k\}$,
let $C^k_i$ be as in the introduction.
For all sequences of numbers, $r:=(r_{1},\ldots,r_{k})$, and $i$-tuples
$\sz:=\{\sz(1),\ldots,\sz(i)\}\in C_{i}^{k}$, let $\vec{b}$ and
$\vec{b}_\sz$ be as in Theorem \ref{t1.15},
$$\|\vec{b}_{\sz}\|_{{\rm Osc}_{\exp L^{r_\sz}}(\mu)}
:=\prod_{j=1}^i\|b_{\sz(j)}\|_{{\rm Osc}_{\exp L^{r_{\sz(j)}}}(\mu)}$$
and, in particular,
$$\|\vec b\|_{{\rm Osc}_{\exp L^r}(\mu)}
:=\prod_{j=1}^k\|b_j\|_{\osj}.$$

Then we prove Theorem \ref{t1.19}.
\begin{proof}[Proof of Theorem \ref{t1.19}]
Without loss of generality, by homogeneity, we may assume that $\|f\|_{\lon}=1$ and
$\|b_i\|_{\osi}=1$ for all $i\in\{1,\ldots,k\}$.

We prove the theorem by two steps: $k=1$ and $k>1$.

Step i) $k=1$. It is easy to see that the conclusion of Theorem \ref{t1.19} automatically
holds true if
$ t\le\bz_6\|f\|_{\lon}/\mu(\cx)$ when $\mu(\cx)<\fz$. Thus, we only need to
 deal with the case that $ t>\bz_6\|f\|_{\lon}/\mu(\cx)$. For any given
bounded function $f$ with bounded support, $q_0:=1/(1-\az)$
and any $ t>\bz_6\|f\|_{\lon}/\mu(\cx)$, applying Lemma \ref{l2.6}
to $f$ with $ t$ replaced by $ t^{q_0}$, and letting $R_j$ be as in Lemma \ref{l2.6}(iii),
we see that $f=g+h$, where
$g:=f\chi_{\cx\setminus \cup_j6B_j}+\sum_j\vz_j$ and $h:=\sum_j(\oz_jf-\vz_j)=:\sum_j h_j$.
Let $p_1\in(1,1/\az)$ and $1/q_1:=1/p_1-\az$.
By \eqref{2.7}, we easily know that $\|g\|_\li\ls  t^{q_0}$. From this,
the boundedness of $T_{\az}$ from $\lpo$ to $\lqo$ and
\eqref{2.19}, it follows that
\begin{eqnarray*}
\mu(\{x\in\cx:\ |T_{\az,b}g(x)|> t\})&&\ls  t^{-q_1}
\int_\cx|T_{\az,b}g(y)|^{q_1}\,d\mu(y)
\ls t^{-q_1}\|g\|^{q_1}_{\lpo}\\
&&\ls t^{-q_1} t^{q_0(p_1-1)q_1/p_1}\|f\|^{q_1/p_1}_{\lon}\ls t^{-q_0},
\end{eqnarray*}
where $T_{\az,b}:=T_{\az,b_1}$. On the other hand, by \eqref{2.3} with $p=1$ and
$t$ replaced by $t^{q_0}$, and the fact that the sequence
of balls, $\{B_j\}_j$, is pairwise disjoint, we see that $\mu\lf(\cup_j6^2B_j\r)\ls
 t^{-q_0}\int_\cx|f(y)|\,d\mu(y)\ls t^{-q_0}$,
and hence the proof
of Step i) can be reduced to proving
\begin{eqnarray}\label{3.14}
&&\mu\lf(\lf\{x\in\cx\setminus\lf(\bigcup_j6^2B_j\r):\ |T_{\az,b}h(x)|> t\r\}\r)\\
&&\noz\hs\ls\lf[\|\Phi_{1/r}( t^{-1}|f|)\|_{\lon}
+\Phi_{1/r}( t^{-1}\|f\|_{\lon})\r]^{q_0}.
\end{eqnarray}

For each fixed $j$ and all $x\in\cx$, let $b_j(x):=b(x)-m_{\wz B_j}(b)$ and write
$$T_{\az,b}h(x)=\sum_jb_j(x)T_\az h_j(x)-\sum_jT_\az(b_jh_j)(x)
=:\mathrm{I}(x)+\mathrm{II}(x).$$

For the term $\mathrm{II}(x)$, by the boundedness of $T_\az$
from $\lon$ to $L^{q_0,\fz}(\mu)$,
we conclude that
\begin{eqnarray*}
\mu\lf(\{x\in \cx\,:\,|\mathrm{II}(x)|> t\}\r)
&&\ls  t^{-q_0}
\lf[\sum_j\int_\cx|b_j(y)h_j(y)|\,d\mu(y)\r]^{q_0}\\
&&\ls t^{-q_0}\lf[\sum_j\int_\cx|b(y)-m_{\wz B_j}(b)|
|f(y)|\oz_j(y)\,d\mu(y)\r]^{q_0}\\
&&\hs+ t^{-q_0}\lf[\sum_j\|\vz_j\|_{\li}\int_{R_j}|b(y)-m_{\wz B_j}(b)|\,d\mu(y)\r]^{q_0}
=:\rm{U}+\rm{V}.
\end{eqnarray*}

By Lemma \ref{l2.6}(iii), we easily know that $R_j$ is also $(6,\bz_6)$-doubling and $R_j=\wz R_j$. Thus,
from Lemmas \ref{l2.2} and \ref{l2.1}, an argument similar to
that used in the proof of \cite[Theorem 1.2]{hmy1},
\eqref{2.5} and the fact that
$\{6B_j\}_j$ is a sequence of finite overlapping balls, we deduce that
\begin{equation}\label{3.15}
\mathrm{V}\ls t^{-q_0}\lf[\sum_j\|\vz_j\|_{\li}\mu(R_j)\r]^{q_0}
\ls t^{-q_0}\lf[\int_\cx|f(y)|\,d\mu(y)\r]^{q_0}
\end{equation}

On the other hand, by the generalized H\"older inequality
(\cite[Lemma 4.1]{fyy}), Lemma \ref{l2.2} and an argument
similar to that used in the proof of \cite[Theorem 1.2]{hmy1}, we have
\begin{equation}\label{3.16}
\mathrm{U}\ls\lf[\|\Phi_{1/r}( t^{-1}|f|)\|_{\lon}
+\Phi_{1/r}( t^{-1}\|f\|_{\lon})\r]^{q_0}.
\end{equation}

Combining \eqref{3.15} and \eqref{3.16}, we know that
\begin{equation}\label{3.17}
\mu\lf(\{x\in\cx\,:\,|\mathrm{II}(x)|> t\}\r)
\ls\lf[\|\Phi_{1/r}( t^{-1}|f|)\|_{\lon}
+\Phi_{1/r}( t^{-1}\|f\|_{\lon})\r]^{q_0},
\end{equation}
which is desired.

Now we turn our attention to $\mathrm{I}(x)$. Let $x_j$ be the center of $B_j$.
Let $\tz$ be a bounded
function with $\|\tz\|_{L^{q_0'}(\mu)}\le 1$ and the support contained in
$\cx\setminus(\cup_j6^2B_j)$. By the vanishing moment of $h_j$ and \eqref{1.8},
we see that
\begin{eqnarray*}
&&\int_{\cx\setminus(\cup_j6^2B_j)}|\mathrm{I}(x)\tz(x)|\,d\mu(x)\\
&&\hs\ls\sum_j\int_{\cx\setminus 2R_j}|b_j(x)\tz(x)|
\lf|\int_{\cx}h_j(y)[K_\az(x,y)-K_\az(x,x_j)]\,d\mu(y)\r|\,d\mu(x)\\
&&\hs\hs+\sum_j\int_{2R_j\setminus6^2B_j}|b_j(x)\tz(x)||T_\az h_j(x)|\,d\mu(x)\\
&&\hs\ls\sum_jr_{R_j}^\dz\int_\cx|h_j(y)|\,d\mu(y)\int_{\cx\setminus 2R_j}
\frac{|b_j(x)\tz(x)|}{[d(x,x_j)]^\dz[\lz(x_j,d(x,x_j))]^{1-\az}}\,d\mu(x)\\
&&\hs\hs+\sum_j\int_{2R_j\setminus6^2B_j}|b_j(x)\tz(x)||T_\az(\oz_jf)(x)|\,d\mu(x)\\
&&\hs\hs+\sum_j\int_{2R_j}|b_j(x)\tz(x)||T_\az(\vz_j)(x)|\,d\mu(x)
=:\mathrm{G}+\mathrm{H}+\mathrm{J}.
\end{eqnarray*}
From \eqref{1.2}, H\"older's inequality, Corollary \ref{c2.3},
\eqref{2.1}, (i) through (iv) of Lemma \ref{l2.1}, we deduce that
\begin{eqnarray*}
&&\int_{\cx\setminus 2R_j}
\frac{|b_j(x)\tz(x)|}{[d(x,x_j)]^\dz[\lz(x_j,d(x,x_j))]^{1-\az}}\,d\mu(x)\\
&&\hs\ls\sum_{k=1}^\fz\lf(2^kr_{R_j}\r)^{-\dz}
\frac 1{[\lz(x_j,2^kr_{R_j})]^{1-\az}}\int_{2^{k+1}R_j}|b(x)-m_{\wz{2^{k+1}R_j}}(b)|
|\tz(x)|\,d\mu(x)\\
&&\hs\hs+\sum_{k=1}^\fz\lf(2^kr_{R_j}\r)^{-\dz}\frac
1{[\lz(x_j,2^kr_{R_j})]^{1-\az}}
|m_{\wz{B_j}}(b)-m_{\wz{2^{k+1}R_j}}(b)|\int_{2^{k+1}R_j}|\tz(x)|\,d\mu(x)\\
&&\hs\ls\sum_{k=1}^\fz\lf(2^kr_{R_j}\r)^{-\dz}\lf[\frac
{\mu(2^{k+2}R_j)}{\lz(x_j,6^kr_{R_j})}\r]^{1-\az}
+\sum_{k=1}^\fz K_{\wz{B_j},\wz{2^{k+1}R_j}}
\lf(2^kr_{R_j}\r)^{-\dz}
\lf[\frac{\mu(6^{k+1}R_j)}{\lz(x_j,6^kr_{R_j})}\r]^{1-\az}\\
&&\hs\ls r_{R_j}^{-\dz},
\end{eqnarray*}
where we used the fact that
$$K_{\wz{B_j},\wz{2^{k+1}R_j}}\ls K_{\wz{B_j},R_j}+K_{{R_j},2^{k+1}R_j}
+K_{2^{k+1}R_j,\wz{2^{k+1}R_j}}
\ls K_{{R_j},2^{k+1}R_j}\ls k.$$
Since $\|h_j\|_{\lon}\ls \int_\cx|f(y)|\oz_j(y)\,d\mu(y)$, we further see that
${\rm G}\ls\|f\|_{\lon}$.

On the other hand, applying H\"older's inequality, Corollary \ref{c2.3}, \eqref{2.1},
(iv), (i) and (iii) of Lemma \ref{l2.1}, the boundedness
of $T_\az$ from $\lpo$ to $\lqo$ with $p_1\in(p_0,1/\az)$ and $1/q_1=1/p_1-\az$, \eqref{2.7},
and the fact that $\{6Q_j\}_j$ is a sequence of finite
overlapping balls, we obtain
\begin{eqnarray*}
\mathrm{J}&&\le\sum_j\int_{2R_j}\lf[|b(x)-m_{\wz{2R_j}}(b)|
+|m_{\wz{B_j}}(b)-m_{\wz{2R_j}}(b)|\r]|T_\az(\vz_j)(x)\tz(x)|\,d\mu(x)\\
&&\le\|\tz\|_{L^{q'_0}(\mu)}
\sum_j\lf\{\lf[\int_{2R_j}|b(x)-m_{\wz{2R_j}}(b)|^{q_0}|T_\az\vz_j(x)|^{q_0}
\,d\mu(x)\r]^{1/q_0}\r.\\
&&\lf.\hs+\lf[\int_{2R_j}|T_\az\vz_j(x)|^{q_0}\,d\mu(x)\r]^{1/q_0}
\lf|m_{\wz{B_j}}(b)-m_{\wz{2R_j}}(b)\r|\r\}\\
&&\ls\sum_j\lf\{\|T_\az\vz_j\|_{L^{q_1}(\mu)}\lf[\int_{2R_j}
|b(x)-m_{\wz{2R_j}}(b)|^{q_0(q_1/q_0)'}\,d\mu(x)\r]^{1/q_0-1/q_1}
\r.\\
&&\hs+[\mu(4R_j)]^{1/q_0-1/q_1}|m_{\wz{B_j}}(b)-m_{\wz{2R_j}}(b)|\Bigg\}
\ls\sum_j[\mu(4R_j)]^{1/q_0-1/q_1}\|\vz_j\|_{L^{p_1}(\mu)}\\
&&\ls\sum_j[\mu(4R_j)]^{1/q_0-1/q_1}\|\vz_j\|_{\li}[\mu(R_j)]^{1/p_1}
\ls\int_\cx|f(x)|\,d\mu(x),
\end{eqnarray*}
where we used the fact that
$$|m_{\wz{B_j}}(b)-m_{\wz{2R_j}}(b)|\le|m_{\wz{B_j}}(b)-m_{R_j}(b)|
+|m_{R_j}(b)-m_{\wz{2R_j}}(b)|\ls 1.$$

To estimate \rm{H}, by \eqref{1.7}, \eqref{1.2} and \eqref{1.3}, we see that, for
all $x\in 2R_j\setminus6^2B_j$,
$$|T_\az(\oz_jf)(x)|\ls \frac 1{[\lz(x_j,d(x,x_j))]^{1-\az}}
\int_{6B_j}|f(y)|\oz_j(y)\,d\mu(y),$$
which further implies that
\begin{eqnarray*}
\mathrm{H}&&\ls\sum_j\lf\{\int_{2R_j\setminus R_j}
\frac{|b_j(x)\tz(x)|}{[\lz(x_j,d(x,x_j))]^{1-\az}}\,d\mu(x)
+\int_{R_j\setminus 6^2B_j}\cdots\r\}\int_\cx|f(y)|\oz_j(y)\,d\mu(y)\\
&&\ls\sum_j\lf\{\frac1{[\lz(x_j,r_{R_j})]^{1-\az}}\lf[\int_\cx|b_j(x)|^{q_0}
\,d\mu(x)\r]^{1/q_0}+\sum_{k=0}^{N-1}
\lf[\frac {\mu((3\times6^2)^{k+2}B_j)}{\lz(x_j,(3\times6^2)^kr_{B_j})}\r]^{1-\az}\r.\\
&&\hs\lf.+\sum_{k=0}^{N-1}
\lf[\frac{\mu((3\times6^2)^{k+1}B_j)}{\lz(x_j,(3\times6^2)^kr_{B_j})}\r]^{1-\az}
|m_{\wz{B_j}}(b)-m_{\wz{(3\times6^2)^{k+1}B_j}}(b)|\r\}\int_\cx|f(y)|\oz_j(y)\,d\mu(y),
\end{eqnarray*}
where $N\in\nn$ satisfies that $R_j=(3\times6^2)^N B_j$. Obviously, for each $k\in\{0,\ldots,N-1\}$,
$(3\times6^2)^kB_j\st R_j$ and hence
$$|m_{\wz{B_j}}(b)-m_{\wz{(3\times6^2)^{k+1}B_j}}(b)|\ls K_{B_j,(3\times6^2)^{k+1}B_j}
\ls K_{B_j,R_j}\ls 1.$$
Consequently, by the fact that $R_j$ is the smallest
$(3\times6^2,C_\lz^{(3\times 6^2)+1})$-doubling ball of the family
$\{(3\times6^2)^kB_j\}_{k\in\nn}$
and an argument similar to that used in
the proof of Lemma \ref{l3.4}(iii), we see that
\begin{eqnarray*}
\mathrm{H}&&\ls\sum_j\lf(1+\sum_{k=0}^{N-1}\lf[\frac{\mu((3\times6^2)^kB_j)}
{\lz(x_j,(3\times6^2)^kr_{B_j})}\r]^{1-\az}\r)
\int_\cx|f(y)|\oz_j(y)\,d\mu(y)\ls\int_\cx|f(y)|\,d\mu(y).
\end{eqnarray*}

Combining the estimates for \rm{G}, \rm{H} and \rm{J}, we then conclude that
$$\int_{\cx\setminus(\cup_j(3\times6^2)^2B_j)}|\mathrm{I}(x)\tz(x)|
\,d\mu(x)\ls\|f\|_{\lon}.$$
Thus, we have
\begin{eqnarray*}
&&\mu\lf(\lf\{x\in\cx\setminus\lf(\bigcup_j6^2B_j\r):\ |\mathrm{I}(x)|>t\r\}\r)\\
&&\noz\hs\ls t^{-q_0}\int_{\cx\setminus(\cup_j6^2B_j)}|\mathrm{I}(x)|^{q_0}\,d\mu(x)
\ls\lf[ t^{-1}\int_{\cx\setminus(\cup_j6
^2B_j)}|f(x)|\,d\mu(x)\r]^{q_0},
\end{eqnarray*}
which, together with \eqref{3.17}, implies \eqref{3.14} and hence completes the
proof of Theorem \ref{t1.19} in the case that $k=1$.

Step ii) $k>1$. The proof of this case is completely analogous to that of
\cite[Theorem 1.2]{hmy1},
the details being omitted, which completes the proof of Theorem \ref{t1.19}.
\end{proof}

\section{Some applications\label{s4}}

\hskip\parindent In this section, we apply all the results of Theorems \ref{t1.13},
\ref{t1.15} and \ref{t1.19} to a specific example of fractional
integrals to obtain some interesting conclusions.

We first need the following notion.

\begin{definition}
Let $\ez\in(0,\fz)$. A dominating function $\lz$ is said to satisfy the
\emph{$\ez$-weak reverse doubling condition} if, for all $r\in(0, 2\,{\mathop\mathrm{diam}}(\cx))$
and $a\in(1,2\,{\mathop\mathrm{diam}}(\cx)/r)$, there exists a number $C(a)\in[1,\fz)$,
depending only on $a$ and $\cx$, such that, for all
$x\in\cx$,
\begin{equation}\label{4.1}
\lz(x,ar)\ge C(a)\lz(x,r)
\end{equation}
and, moreover,
\begin{equation}\label{4.2}
\sum_{k=1}^{\fz}\frac1{[C(a^k)]^{\ez}}<\fz.
\end{equation}
\end{definition}

\begin{remark}\label{r4.1}
(i) We remark that the  {$1$-weak reverse doubling condition} is just the
weak reverse doubling condition introduced in \cite[Definition 3.1]{fyy2}. Moreover,
it is easy to see that, if $\ez_1<\ez_2$ and $\lz$ satisfies
the  {$\ez_1$-weak reverse doubling condition}, then $\lz$ also satisfies
the  {$\ez_2$-weak reverse doubling condition}.

(ii) Assume that $\diam(\cx)=\fz$.
Let $a=2^k$ and $r=2^{-k}$ in \eqref{4.1}. Then, by \eqref{4.2}, we see that,
for any fixed $x\in\cx$,
$$\lim_{k\to\fz}\lz(x,2^{-k})\le\lim_{k\to\fz}\frac1{C(2^k)}\lz(x,1)=0.
$$
Thus, by the fact that $r\to\lz(x,r)$
is non-decreasing  for any fixed $x\in\cx$,
we further know that $\lim_{r\to 0}\lz(x,r)=0$.

On the other hand, by \eqref{4.2}, we see that $\lim_{k\to\fz}C(2^k)=\fz$.
Letting $a=2^k$
and $r=1$ in \eqref{4.1}, by an argument similar to the case
$r\to 0$, we  know that, for any fixed $x\in\cx$, $\lim_{r\to\fz}\lz(x,r)=\fz$.

(iii) By Remark \ref{r1.4}(i), the
dominating function in the Euclidean space $\rd$ with
a Radon measure $\mu$ as in \eqref{1.1} is $\lz(x,r):=C_0r^{\kz}$, which
satisfies the {$\ez$-weak reverse doubling condition} for any
$\ez\in(0,\fz)$.

(iv) If $(\cx, d, \mu)$ is an RD-\emph{space}, namely, a
space of homogeneous type in the
sense of Coifman and Weiss with a measure
$\mu$ satisfying both the doubling and the reverse doubling conditions,
then  $\lz(x,r):=\mu(B(x,r))$
is the dominating function satisfying
the  {$\ez$-weak reverse doubling condition}
for any $\ez\in(0,\fz)$.
It is known that a  connected space of homogeneous type in the
sense of Coifman and Weiss is always an RD-space
(see \cite[p.\,65]{yz} and
\cite[Remark 3.4(ii)]{fyy2}).

(v) We remark that the {$\ez$-weak reverse doubling condition} is much weaker than
the assumption introduced by Bui and Duong in \cite[Subsection 7.3]{bd}:
there exists $m\in(0,\fz)$ such that, for all
$x\in\cx$ and $a,\,r\in (0,\fz)$,
$\lz(x,ar)=a^m\lz(x,r).$
\end{remark}

Before we give an example, we first establish a technical
lemma adapted from \cite[Lemma 2.1]{gg}.  It turns out that the integral
kernel $1/{[\lambda(y,d(x,y))]^{1-\az}}$ for $\az\in(0,1)$ is locally integrable.

\begin{lemma}\label{l4.2}
Let $\alpha\in (0,1)$ and $\lz$ satisfy the {$\az$-weak reverse doubling condition}.
Then there exists a positive constant $C$, depending
on $\az$ and $m$, such that, for all $x\in\cx$ and $r\in(0,2\,{\mathop\mathrm{diam}}(\cx))$,
$$\int_{B(x,r)}\frac 1{[\lz(y,d(x,y))]^{1-\az}}\,d\mu(y)\le C[\lz(x,r)]^\az.$$
\end{lemma}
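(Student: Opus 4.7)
The plan is to decompose the ball $B(x,r)$ into the dyadic annuli
$$E_k:=\lf\{y\in\cx:\ 2^{-k-1}r\le d(x,y)<2^{-k}r\r\},\quad k\in\zz_+,$$
and estimate the integral on each $E_k$ separately. The integrand is singular only at $y=x$, which may be harmlessly excluded, so that the integration domain is $B(x,r)\setminus\{x\}=\bigcup_{k\ge 0}E_k$.

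The key step is the pointwise lower bound $\lz(y,d(x,y))\gs\lz(x,2^{-k}r)$ on $E_k$, which I would obtain by chaining three properties of $\lz$: first, monotonicity of $\lz(y,\cdot)$ gives $\lz(y,d(x,y))\ge\lz(y,2^{-k-1}r)$; next, the upper doubling condition \eqref{1.2} gives $\lz(y,2^{-k-1}r)\ge\lz(y,2^{-k}r)/C_{\lz}$; and finally, since $d(x,y)<2^{-k}r$ for $y\in E_k$, property \eqref{1.3} applied at radius $2^{-k}r$ gives $\lz(y,2^{-k}r)\gs\lz(x,2^{-k}r)$. Combined with $\mu(E_k)\le\mu(B(x,2^{-k}r))\le\lz(x,2^{-k}r)$, this yields
$$\int_{E_k}\frac{d\mu(y)}{[\lz(y,d(x,y))]^{1-\az}}\ls\frac{\mu(B(x,2^{-k}r))}{[\lz(x,2^{-k}r)]^{1-\az}}\le[\lz(x,2^{-k}r)]^\az.$$

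To sum over $k$, I would invoke the $\az$-weak reverse doubling condition with $a=2$; since $r\in(0,2\,\diam(\cx))$, the pair $(2^k,2^{-k}r)$ lies in the admissible range of \eqref{4.1} for every $k\ge 1$, so that
$$\lz(x,r)=\lz\lf(x,2^k\cdot 2^{-k}r\r)\ge C(2^k)\,\lz(x,2^{-k}r),$$
which gives $[\lz(x,2^{-k}r)]^\az\le[\lz(x,r)]^\az/[C(2^k)]^\az$. Summing over $k\ge 0$ produces the factor $1+\sum_{k=1}^{\fz}[C(2^k)]^{-\az}$, which is finite by \eqref{4.2} with $\ez=\az$, and the desired bound $C[\lz(x,r)]^\az$ follows.

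The main obstacle, modest as it is, lies in the clean chaining of the three properties of $\lz$ to reduce $\lz(y,d(x,y))$ to an $x$-centered quantity on each annulus; once this pointwise estimate is in place, the summation is an immediate application of the hypothesis. One should also note that the restriction $r<2\,\diam(\cx)$ is used precisely to guarantee admissibility of $a=2^k$ at the radius $2^{-k}r$ in \eqref{4.1} for each $k\ge 1$, and that excluding the point $\{x\}$ causes no issue since it does not appear in any annulus $E_k$.
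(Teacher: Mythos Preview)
Your proposal is correct and follows essentially the same approach as the paper's proof: both decompose $B(x,r)$ into dyadic annuli, reduce $\lz(y,d(x,y))$ to an $x$-centered quantity via \eqref{1.3} and \eqref{1.2}, and then sum using the $\az$-weak reverse doubling condition \eqref{4.1}--\eqref{4.2}. The only cosmetic difference is that the paper applies \eqref{1.3} first (globally replacing $\lz(y,d(x,y))$ by $\lz(x,d(x,y))$ before decomposing), whereas you decompose first and then chain monotonicity, \eqref{1.2}, and \eqref{1.3} on each annulus; the net effect is identical.
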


\begin{proof}
From \eqref{1.3}, \eqref{1.2}, \eqref{4.1} and \eqref{4.2}, we deduce that
\begin{eqnarray*}
&&\int_{B(x,r)}\frac 1{[\lz(y,d(x,y))]^{1-\az}}\,d\mu(y)\\
&&\hs\ls \int_{B(x,r)}\frac 1{[\lz(x,d(x,y))]^{1-\az}}\,d\mu(y)\ls\sum_{j=0}^\fz
\frac{\mu(B(x,2^{-j}r))}{[\lz(x,2^{-j-1}r)]^{1-\az}}
\ls\sum_{j=0}^{\fz}\frac{\lz(x,2^{-j}r)}{[\lz(x,2^{-j-1}r)]^{1-\az}}\\
&&\hs\ls\sum_{j=0}^{\fz}[\lz(x,2^{-j-1}r)]^\az
\ls\sum_{j=1}^{\fz}\frac1{[C(2^j)]^\az}[\lz(x,r)]^\az
\ls[\lz(x,r)]^\az,
\end{eqnarray*}
which completes the proof of Lemma \ref{l4.2}.
\end{proof}

For all $\az\in(0,1)$, $f\in L^{\fz}_b(\mu)$ and $x\in\cx$, the
\emph{fractional integral} $I_\az f(x)$ is defined by
\begin{eqnarray}\label{4.3}
I_\az f(x):=\int_\cx \frac{f(y)}{[\lambda(y,d(x,y))]^{1-\az}}\,d\mu(y).
\end{eqnarray}
Notice that, if $(\cx,d,\mu)=(\rr^d,|\cdot|,\mu)$,
$\lz(x,r)=r^\kz$ with $\kz\in(0,d]$ and the measure $\mu$ is as in \eqref{1.1},
then $I_\az$ is just the classical fractional integral in the non-doubling space
$(\rr^d,|\cdot|,\mu)$.

We now show that the kernel of $I_\az$ satisfies all the
assumptions of this article. By \eqref{1.3}, we know that the
integral kernel $K_\az(x,y):=\frac 1{[\lambda(y,d(x,y))]^{1-\az}}$ satisfies
\eqref{1.7}.
By Remark \ref{1.4}(iii), without loss of generality,
we may assume that $\lz$ satisfies that there exist
$\ez,\,\wz{C}\in(0,\fz)$ such that,
for all $x\in\cx$, $r\in(0,\fz)$ and $t\in[0,r]$,
\begin{equation}\label{4.4}
|\lz(x,r+t)-\lz(x,r)|
\le \wz{C}\frac{t^\ez}{r^\ez}\lz(x,r).
\end{equation}

\begin{remark}\label{r4.3}
By \eqref{4.4}, we see that, for a fixed $x\in\cx$, $r\to\lz(x,r)$ is
continuous on $(0,\fz)$.
\end{remark}
Now we
show that the integral kernel $K_\az$ of $I_\az$ also satisfies
\eqref{1.8}.

\begin{proposition}\label{p4.4}
Assume that $\lz$ satisfies \eqref{4.4}. Then
the integral kernel $K_\az$ of $I_\az$ in \eqref{4.3} satisfies
\eqref{1.8}.
\end{proposition}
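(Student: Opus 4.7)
The plan is to take $\dz := \min\{\ez, 1\}\in (0,1]$ and, following Remark~\ref{r1.4}(iii), to use the full weak growth condition (of which \eqref{4.4} is the specialization at coinciding base points), namely
$$|\lz(y,r+t)-\lz(x,r)|\le C_\lz\lf[\frac{d(x,y)+t}{r}\r]^{\ez}\lz(x,r)$$
whenever $t\in[0,r]$ and $d(x,y)\le r$. Fix $x,\wz x,y\in\cx$ with $d(x,y)\ge 2d(x,\wz x)$, so the triangle inequality yields $|d(x,y)-d(\wz x,y)|\le d(x,\wz x)\le d(x,y)/2$ and in particular $d(\wz x,y)\sim d(x,y)$. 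Throughout I will combine this with \eqref{1.3} to freely interchange $\lz(x,d(x,y))$, $\lz(y,d(x,y))$ and $\lz(\wz x,d(\wz x,y))$ up to uniform constants.

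For the first summand in \eqref{1.8}, I write
$$K_\az(x,y)-K_\az(\wz x,y)=\frac{1}{[\lz(y,d(x,y))]^{1-\az}}-\frac{1}{[\lz(y,d(\wz x,y))]^{1-\az}}.$$
Setting $r:=\min\{d(x,y),d(\wz x,y)\}$ and $t:=|d(x,y)-d(\wz x,y)|\le d(x,\wz x)\le r$, the weak growth condition at coinciding base point $y$ gives $|\lz(y,r+t)-\lz(y,r)|\le C_\lz(t/r)^{\ez}\lz(y,r)$. Then a mean value estimate for $s\mapsto s^{-(1-\az)}$ on the segment between the two values of $\lz(y,\cdot)$ (which are comparable, by the preceding inequality with $t\le r/2$) produces
$$\lf|\frac{1}{a^{1-\az}}-\frac{1}{b^{1-\az}}\r|\ls\frac{|b-a|/a}{a^{1-\az}},$$
whence this first difference is dominated by $[d(x,\wz x)/d(x,y)]^{\dz}/[\lz(x,d(x,y))]^{1-\az}$ after invoking $\lz(y,d(x,y))\sim\lz(x,d(x,y))$.

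For the second summand, insert the intermediate quantity $[\lz(x,d(\wz x,y))]^{\az-1}$ and split
\begin{eqnarray*}
K_\az(y,x)-K_\az(y,\wz x)&=&\lf\{\frac{1}{[\lz(x,d(x,y))]^{1-\az}}-\frac{1}{[\lz(x,d(\wz x,y))]^{1-\az}}\r\}\\
&&+\lf\{\frac{1}{[\lz(x,d(\wz x,y))]^{1-\az}}-\frac{1}{[\lz(\wz x,d(\wz x,y))]^{1-\az}}\r\}.
\end{eqnarray*}
The first brace is handled exactly as in the previous paragraph, with $y$ replaced by $x$ as the common base point. For the second brace, apply the weak growth condition with $t=0$ to the two base points $x$ and $\wz x$ at the common radius $r=d(\wz x,y)$ (note $d(x,\wz x)\le d(\wz x,y)$ thanks to $d(x,y)\ge 2d(x,\wz x)$) to get $|\lz(x,d(\wz x,y))-\lz(\wz x,d(\wz x,y))|\ls[d(x,\wz x)/d(\wz x,y)]^{\ez}\lz(x,d(\wz x,y))$; the same mean value estimate as before then yields a bound of the form $[d(x,\wz x)/d(\wz x,y)]^{\dz}/[\lz(x,d(\wz x,y))]^{1-\az}$, which converts into the desired bound via $d(\wz x,y)\sim d(x,y)$ and $\lz(x,d(\wz x,y))\sim \lz(x,d(x,y))$.

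The main subtlety is that \eqref{4.4} by itself only encodes H\"older smoothness of $\lz$ in the radius at a fixed base point, so it cannot handle the jump from $\lz(x,\cdot)$ to $\lz(\wz x,\cdot)$ that appears in the second summand; this is why I pass through the stronger equivalent formulation in Remark~\ref{r1.4}(iii). Aside from this, the work is mostly bookkeeping: verifying that the constraint $t\le r$ in the weak growth condition is met (which is where $c_{K_\az}=2$ enters, via Remark~\ref{r1.10}(i)), keeping the mean value estimate clean, and using \eqref{1.2}--\eqref{1.3} to harmonize the base points and radii appearing in the final denominator $[\lz(x,d(x,y))]^{1-\az}$.
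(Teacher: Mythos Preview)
Your argument is correct and, in fact, more complete than the paper's. For the first summand $|K_\az(x,y)-K_\az(\wz x,y)|$ the two proofs run in parallel: the paper splits into the cases $d(x,y)\le d(\wz x,y)$ and $d(\wz x,y)\le d(x,y)$ and then uses the concavity inequality $|a^{1-\az}-b^{1-\az}|\le|a-b|^{1-\az}$ (yielding $\dz=\ez(1-\az)$), whereas you use the mean value estimate for $s\mapsto s^{-(1-\az)}$ together with the comparability of the two values of $\lz(y,\cdot)$ (yielding $\dz=\min\{\ez,1\}$). Either device works; yours gives a slightly larger, hence better, H\"older exponent.

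The genuine difference is in the second summand $|K_\az(y,x)-K_\az(y,\wz x)|=|[\lz(x,d(x,y))]^{\az-1}-[\lz(\wz x,d(\wz x,y))]^{\az-1}|$. The paper's written proof does not treat this term at all. You correctly observe that \eqref{4.4}, being a statement about $\lz$ at a fixed base point, cannot by itself control the jump from $\lz(x,\cdot)$ to $\lz(\wz x,\cdot)$; the comparability in \eqref{1.3} gives only a bound of order one, not the needed factor $[d(x,\wz x)/d(x,y)]^{\dz}$. Your fix---inserting the intermediate term $[\lz(x,d(\wz x,y))]^{\az-1}$ and invoking the full weak growth condition of Remark~\ref{r1.4}(iii) with $t=0$ to handle the base-point change---is exactly what is needed, and is consistent with the paper's own appeal to Remark~\ref{r1.4}(iii) in deriving \eqref{4.4}. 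In short, your proof fills a gap left in the paper's argument.
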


\begin{proof}
For all $x,\,\wz{x},\,y\in\cx$ with $d(x,y)\ge 2d(x,\wz{x})$,
we consider the following two cases.

Case i) $d(x,y)\le d(\wz{x},y)$. Let $t=d(\wz{x},y)-d(x,y)$ and $r=d(x,y)$.
Then, by $0\le t\le d(x,\wz{x})\le\frac12 d(x,y)\le d(x,y)=r$ and \eqref{4.4},
we see that
\begin{equation*}
|\lz(y,d(\wz{x},y))-\lz(y,d(x,y))|\ls\frac{[d(\wz{x},y)-d(x,y)]^\ez}{[d(x,y)]^\ez}
\lz(y,d(x,y))\ls\lf[\frac{d(x,\wz{x})}{d(x,y)}\r]^\ez\lz(y,d(x,y)).
\end{equation*}
From this, $d(x,y)\le d(\wz{x},y)$, Definition \ref{d1.3} and
\eqref{1.3}, we further deduce that
\begin{eqnarray*}
&&|K_\az(x,y)-K_\az(\wz{x},y)|\\
&&\hs\le\lf|\frac1{\lz(y,d(x,y))}-\frac1{\lz(y,d(\wz{x},y))}\r|^{1-\az}
=\frac{|\lz(y,d(\wz{x},y))-\lz(y,d(x,y))|^{1-\az}}
{[\lz(y,d(\wz{x},y))\lz(y,d(x,y))]^{1-\az}}\\
&&\hs\ls\frac{[d(x,\wz{x})]^{\ez(1-\az)}}
{[d(x,y)]^{\ez(1-\az)}[\lz(y,d(\wz{x},y))]^{1-\az}}
\ls\frac{[d(x,\wz{x})]^{\ez(1-\az)}}
{[d(x,y)]^{\ez(1-\az)}[\lz(x,d(x,y))]^{1-\az}}.
\end{eqnarray*}
This finishes the proof of \eqref{1.8} in this case.

Case ii) $d(\wz{x},y)\le d(x,y)$. In this case, since $d(x,y)\ge 2d(x,\wz{x})$,
it follows that $$d(x,\wz{x})\le \frac12d(x,y)
\le \frac12[d(x,\wz{x})+d(\wz{x},y)],$$
and hence $d(x,\wz{x})\le d(\wz{x},y)$.
Then, by an argument similar to that used in the proof of Case i),
we see that
\begin{eqnarray*}
&&|K_\az(x,y)-K_\az(\wz{x},y)|\ls\frac{[d(x,\wz{x})]^{\ez(1-\az)}}
{[d(\wz{x},y)]^{\ez(1-\az)}[\lz(x,d(x,y))]^{1-\az}},
\end{eqnarray*}
which, together with $d(x,y)\le d(x,\wz{x})+d(\wz{x},y)\le 2d(\wz{x},y)$,
further implies that  \eqref{1.8} holds true in this case.
This finishes the proof of Proposition \ref{p4.4}.
\end{proof}

To consider the boundedness of $I_\az$ on Lebesgue spaces, we need
the following  Welland inequality in the present setting,
which is a variant of
\cite[Theorem 6.4]{gm}.

\begin{lemma}\label{l4.5}
Assume that $\diam (\cx)=\fz$.
Let $\az\in(0,1)$, $\ez\in(0,\min\{\az,1-\az\})$ and
$\lz$ satisfy the {$\ez$-weak reverse doubling condition}.
Then there exists a positive constant $C$, independent of $f$ and $x$, such that,
for all $x\in\cx$ and $f\in L^{\fz}_b(\mu)$,
$$|I_\az f(x)|\le C\lf[M_{1,6}^{(\az+\ez)}f(x)M_{1,6}^{(\az-\ez)}f(x)\r]^{1/2}, $$
where $M_{1,6}^{(\az)}$ for $\az\in(0,1)$ is defined as in Lemma \ref{l3.1}.
\end{lemma}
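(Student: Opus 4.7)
The plan is to mimic the classical Welland inequality: split $|I_\az f(x)|$ at a radius $R\in(0,\fz)$ into an inner piece, controlled by $M_{1,6}^{(\az-\ez)}f(x)$ with weight $[\lz(x,R)]^{\ez}$, and an outer piece, controlled by $M_{1,6}^{(\az+\ez)}f(x)$ with weight $[\lz(x,R)]^{-\ez}$; then choose $R$ to balance the two contributions.

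For the inner piece, I would decompose $\int_{B(x,R)}\cdots$ along the dyadic annuli $A_k^{-}:=B(x,2^{-k}R)\setminus B(x,2^{-k-1}R)$ for $k\ge0$. On each $A_k^{-}$, use \eqref{1.3} together with the monotonicity of $r\mapsto\lz(x,r)$ to bound the kernel by $C[\lz(x,2^{-k-1}R)]^{\az-1}$, then apply the defining inequality of $M_{1,6}^{(\az-\ez)}$ to estimate $\int_{B(x,2^{-k}R)}|f|\,d\mu\le[\mu(6B(x,2^{-k}R))]^{1-(\az-\ez)}M_{1,6}^{(\az-\ez)}f(x)$, and finally invoke \eqref{1.2} together with finitely many iterations of upper doubling to replace $\mu(6B(x,2^{-k}R))$ by $\lz(x,2^{-k-1}R)$ up to a multiplicative constant. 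The net annular contribution is then $\ls[\lz(x,2^{-k-1}R)]^{\ez}M_{1,6}^{(\az-\ez)}f(x)$. Applying \eqref{4.1} with $a:=2^{k+1}$ gives $\lz(x,2^{-k-1}R)\le[C(2^{k+1})]^{-1}\lz(x,R)$, so summing and using the $\ez$-weak reverse doubling hypothesis \eqref{4.2} to absorb $\sum_{k\ge 0}[C(2^{k+1})]^{-\ez}$ yields the inner estimate $C[\lz(x,R)]^{\ez}M_{1,6}^{(\az-\ez)}f(x)$. The outer piece $\int_{\cx\setminus B(x,R)}\cdots$ is handled in entirely parallel fashion by decomposing along $A_k^{+}:=B(x,2^{k+1}R)\setminus B(x,2^kR)$ and using $M_{1,6}^{(\az+\ez)}$; invoking \eqref{4.1} at $a:=2^k$ for $k\ge 1$ and summing as before produces the outer estimate $C[\lz(x,R)]^{-\ez}M_{1,6}^{(\az+\ez)}f(x)$.

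Combining the two halves gives, for every $R\in(0,\fz)$,
\begin{equation*}
|I_\az f(x)|\le C\lf\{[\lz(x,R)]^{\ez}M_{1,6}^{(\az-\ez)}f(x)+[\lz(x,R)]^{-\ez}M_{1,6}^{(\az+\ez)}f(x)\r\}.
\end{equation*}
To conclude, I would choose $R$ so that $[\lz(x,R)]^{2\ez}=M_{1,6}^{(\az+\ez)}f(x)/M_{1,6}^{(\az-\ez)}f(x)$; since $\diam(\cx)=\fz$, Remark \ref{r4.1}(ii) together with the continuity of $r\mapsto\lz(x,r)$ from Remark \ref{r4.3} shows that this map sends $(0,\fz)$ onto $(0,\fz)$, so such an $R$ exists whenever both fractional maximal functions are strictly positive and finite. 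The degenerate cases are handled separately: if $M_{1,6}^{(\az+\ez)}f(x)=0$ then $f=0$ $\mu$-almost everywhere on every ball through $x$ and the claim is trivial, while if either side is $\fz$ the inequality is vacuous. The main technical obstacle is controlling the two dyadic sums over annuli, and this is exactly what the summability condition \eqref{4.2} built into the $\ez$-weak reverse doubling hypothesis (with the same $\ez$ as in the statement) is tailored to provide; the restriction $\ez<\min\{\az,1-\az\}$ keeps both exponents $\az\pm\ez$ inside $(0,1)$, so $M_{1,6}^{(\az\pm\ez)}$ remains a natural fractional maximal operator of the type in Lemma \ref{l3.1}.
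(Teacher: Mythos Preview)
Your proposal is correct and follows essentially the same route as the paper's own proof: split $|I_\az f(x)|$ at a radius, decompose each piece into dyadic annuli, control the inner and outer sums via $M_{1,6}^{(\az\mp\ez)}$ and the $\ez$-weak reverse doubling condition \eqref{4.1}--\eqref{4.2}, and then optimize over the splitting radius using Remark~\ref{r4.1}(ii) and Remark~\ref{r4.3}. The only differences are cosmetic (the paper writes the annular estimate slightly more compactly and handles the degenerate cases with a one-line ``without loss of generality'' at the outset), so nothing further is needed.
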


\begin{proof}
Without loss of generality,
we may assume that the right-hand side of the desired
inequality is finite.
Let $s\in (0,\fz)$. We write
$$|I_\az f(x)|\le\int_{B(x,s)}\frac{|f(y)|}{[\lambda(y,d(x,y))]^{1-\az}}\,d\mu(y)
+\int_{\cx\setminus B(x,s)}\cdots=:\rm I+\rm II.$$
By \eqref{1.3}, \eqref{1.2}, \eqref{4.1} and \eqref{4.2}, we see that,
\begin{eqnarray*}
\rm I&&\ls\int_{B(x,s)}\frac {|f(y)|}{[\lz(x,d(x,y))]^{1-\az}}\,d\mu(y)
\ls\sum_{j=0}^\fz\frac 1{[\lz(x,2^{-j-1}s)]^{1-\az}}\int_{B(x,2^{-j}s)}|f(y)|
\,d\mu(y)\\
&&\sim\sum_{j=0}^\fz
\frac{[\lz(x,2^{-j-1}s)]^\ez}{[\lz(x,2^{-j-1}s)]^{1-\az+\ez}}
\int_{B(x,2^{-j}s)}|f(y)|\,d\mu(y)\\
&&\ls[\lz(x,s)]^\ez \sum_{j=1}^{\fz}\frac1{[C(2^{j})]^\ez}M_{1,6}^{(\az-\ez)}f(x)
\ls[\lz(x,s)]^\ez M_{1,6}^{(\az-\ez)}f(x).
\end{eqnarray*}

Similarly, we also see that ${\rm II}
\ls[\lz(x,s)]^{-\ez}M_{1,6}^{(\az+\ez)}f(x)$. Thus,
$$|{\rm I}_\az f(x)|\ls[\lz(x,s)]^\ez M_{1,6}^{(\az-\ez)}f(x)
+[\lz(x,s)]^{-\ez}M_{1,6}^{(\az+\ez)}f(x).$$
By Remark \ref{r4.1}(ii) and Remark \ref{r4.3}, we can choose $s\in (0,\fz)$ such that $$[\lz(x,s)]^{\ez}:=\lf[\frac{M_{1,6}^{(\az+\ez)}f(x)}
{M_{1,6}^{(\az-\ez)}f(x)}\r]^{1/2}.$$
Then we obtain the desired conclusion and hence complete the proof of Lemma \ref{l4.5}.
\end{proof}

Now we are ready to state the main theorem of this section.

\begin{theorem}\label{t4.6}
 Assume that   $\diam(\cx)=\fz$.
 Let $\az\in(0,1)$, $p\in(1,1/\az)$ and $1/q=1/p-\az$.
If $\lz$ satisfies the
{$\ez$-weak reverse doubling condition}
for some $\ez\in(0,\min\{\az,1-\az,1/q\})$, then $I_\az$ is bounded from $\lp$
into $\lq$.
\end{theorem}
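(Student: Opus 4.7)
The plan is to derive Theorem~\ref{t4.6} directly from the pointwise Welland-type inequality of Lemma~\ref{l4.5}, together with the $L^p\to L^q$ boundedness of the fractional maximal operators recalled in Remark~\ref{r3.2}. The three parts of the hypothesis $\ez\in(0,\min\{\az,1-\az,1/q\})$ will play complementary roles: $\ez<\min\{\az,1-\az\}$ (plus the $\ez$-weak reverse doubling assumption) is exactly what is needed to apply Lemma~\ref{l4.5}, while $\ez<1/q$ will keep the auxiliary exponents in the maximal-function step admissible.

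First, I would apply Lemma~\ref{l4.5} to any $f\in L^\fz_b(\mu)$ to obtain, for every $x\in\cx$,
$$|I_\az f(x)|\ls\lf[M_{1,6}^{(\az+\ez)}f(x)\,M_{1,6}^{(\az-\ez)}f(x)\r]^{1/2}.$$
Next, I would introduce auxiliary exponents $q_1,q_2\in(1,\fz)$ defined by $1/q_i:=1/p-(\az\pm\ez)$; the chain $0<1/q_1<1/q<1/q_2<1$ follows from $\ez<1/q$ together with $p\in(1,1/\az)$ and $\ez<\az$. By Remark~\ref{r3.2}, the maximal operators $M_{1,6}^{(\az+\ez)}$ and $M_{1,6}^{(\az-\ez)}$ are then bounded from $L^p(\mu)$ into $L^{q_1}(\mu)$ and $L^{q_2}(\mu)$, respectively.

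To conclude, I would raise the pointwise estimate to the $q$-th power and apply H\"older's inequality with the exponent pair $(2q_1/q,\,2q_2/q)$. These are conjugate, since $1/q=1/p-\az$ gives $q/(2q_1)+q/(2q_2)=(q/2)(2/p-2\az)=q(1/p-\az)=1$. Consequently,
$$\|I_\az f\|_{L^q(\mu)}^q\ls\|M_{1,6}^{(\az+\ez)}f\|_{L^{q_1}(\mu)}^{q/2}\|M_{1,6}^{(\az-\ez)}f\|_{L^{q_2}(\mu)}^{q/2}\ls\|f\|_{L^p(\mu)}^q,$$
and a standard density argument, based on the density of $L^\fz_b(\mu)$ in $L^p(\mu)$, extends $I_\az$ to a bounded linear operator from $L^p(\mu)$ into $L^q(\mu)$. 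The analytic core has already been absorbed into Lemma~\ref{l4.5}, so no serious obstacle remains beyond the elementary bookkeeping that verifies each part of the hypothesis on $\ez$ matches the corresponding step.
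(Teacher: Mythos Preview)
Your proposal is correct and follows essentially the same route as the paper's proof: both apply the Welland-type pointwise bound of Lemma~\ref{l4.5}, then use H\"older's inequality with the conjugate pair $(2q_1/q,\,2q_2/q)$ (which the paper writes as $(q^+,q^-)$ with $q_\ez^\pm$ playing the role of your $q_1,q_2$), and finally invoke the $L^p\to L^{q_i}$ boundedness of $M_{1,6}^{(\az\pm\ez)}$ from Lemma~\ref{l3.1}/Remark~\ref{r3.2}. Your bookkeeping on why each piece of the hypothesis $\ez<\min\{\az,1-\az,1/q\}$ is used is slightly more explicit than the paper's, but the argument is the same.
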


\begin{proof}
Let  $\frac1{q_\ez^+}:=\frac1q-\ez$,
$\frac1{q_\ez^-}:=\frac1q+\ez$, $q^+:=2\frac{q_\ez^+}q$ and $q^-:=2\frac{q_\ez^-}q$.
Then we have
$1<p<q_\ez^-<q<q_\ez^+<\fz$,  $1<q^-<q^+<\fz$ and $1/q^++1/q^-=1$.
From Lemma \ref{l4.5}, H\"older's inequality and Lemma \ref{l3.1}, it follows that
\begin{eqnarray*}
\|I_\az f\|_\lq&&\ls\lf\|\lf[M_{1,6}^{(\az+\ez)}f\r]^{q/2}\r\|^{1/q}_{L^{q^+}(\mu)}
\lf\|\lf[M_{1,6}^{(\az-\ez)}f\r]^{q/2}\r\|^{1/q}_{L^{q^-}(\mu)}\\
&&\sim\|M_{1,6}^{(\az+\ez)}f\|^{1/2}_{L^{q_\ez^+}(\mu)}
\|M_{1,6}^{(\az-\ez)}f\|^{1/2}_{L^{q_\ez^-}(\mu)}\ls\|f\|_\lp^{1/2}\|f\|_\lp^{1/2}
\sim\|f\|_\lp,
\end{eqnarray*}
which completes the proof of Theorem \ref{t4.6}.
\end{proof}

From Theorems \ref{t4.6}, \ref{t1.13}, \ref{t1.15} and
\ref{t1.19}, we immediately deduce the following interesting conclusions,
the details being omitted.

\begin{corollary}\label{c4.7}
Under the same assumption as that of Theorem \ref{t4.6},
all the conclusions of
Theorems \ref{t1.13},
\ref{t1.15} and \ref{t1.19} hold true, if $T_\az$ therein is replaced by $I_\az$
as in \eqref{4.3}.
\end{corollary}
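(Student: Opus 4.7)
The plan is to observe that Corollary \ref{c4.7} is essentially a direct consequence of Theorems \ref{t1.13}, \ref{t1.15}, \ref{t1.19} and \ref{t4.6}, once we check that the operator $I_\az$ in \eqref{4.3} fits into the framework of Definition \ref{d1.9} and satisfies the $L^p$--$L^q$ boundedness hypothesis common to those theorems.

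First I would verify that the kernel $K_\az(x,y):=1/[\lz(y,d(x,y))]^{1-\az}$ of $I_\az$ satisfies the size estimate \eqref{1.7}. This is immediate from the pointwise comparability $\lz(x,d(x,y))\sim\lz(y,d(x,y))$ provided by \eqref{1.3}, up to an absolute multiplicative constant depending only on $C_{\wz{\lz}}$. Next, the regularity estimate \eqref{1.8} has already been established in Proposition \ref{p4.4} under the mild structural hypothesis \eqref{4.4}, which by Remark \ref{r1.4}(iii) (namely, Tan--Li's equivalence between the upper doubling condition and the weak growth condition) may be assumed without loss of generality. Hence $K_\az$ is a generalized fractional integral kernel in the sense of Definition \ref{d1.9}, and $I_\az$ is a bona fide generalized fractional integral.

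Second, I would invoke Theorem \ref{t4.6} to confirm that, under the $\ez$-weak reverse doubling assumption with $\ez\in(0,\min\{\az,1-\az,1/q\})$, the operator $I_\az$ is bounded from $L^p(\mu)$ into $L^q(\mu)$ for all $p\in(1,1/\az)$ and $1/q=1/p-\az$. This is precisely the hypothesis appearing in statement (I) of Theorem \ref{t1.13}, as well as in the assumptions on $T_\az$ in both Theorem \ref{t1.15} and Theorem \ref{t1.19}.

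With these two ingredients in hand, the conclusion follows by direct application. For Theorem \ref{t1.13}, statement (I) holds with $T_\az$ replaced by $I_\az$, so the equivalent statements (II)--(V) automatically hold for $I_\az$ as well. For Theorem \ref{t1.15}, all hypotheses (the kernel conditions \eqref{1.7}, \eqref{1.8} and the $L^p$--$L^q$ boundedness) have been verified for $I_\az$, so the Orlicz-space boundedness of the multilinear commutator $I_{\az,\vec b}$ follows. Likewise, Theorem \ref{t1.19} yields the weak-type endpoint estimate for the multilinear commutator of $I_\az$ with functions in $\osi$. No step in this passage presents a genuine obstacle; the only subtle point is the preliminary reduction to the structural condition \eqref{4.4}, which is already handled uniformly for the whole paper via Remark \ref{r1.4}(iii), so that Proposition \ref{p4.4} applies without additional hypotheses on $\lz$ beyond those already standing.
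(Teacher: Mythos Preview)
Your proposal is correct and follows exactly the approach the paper intends: the paper's own proof is simply ``From Theorems \ref{t4.6}, \ref{t1.13}, \ref{t1.15} and \ref{t1.19}, we immediately deduce the following interesting conclusions, the details being omitted,'' and your write-up fleshes out precisely those omitted details, namely that \eqref{1.7} follows from \eqref{1.3}, that \eqref{1.8} is supplied by Proposition \ref{p4.4} (via the reduction in Remark \ref{r1.4}(iii)), and that Theorem \ref{t4.6} furnishes the required $L^p$--$L^q$ boundedness.
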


\medskip

\noindent{\bf Acknowledgements.} The first author would like to
express his deep thanks to Professors Yan Meng, Haibo Lin and Dongyong Yang
for some helpful discussions on the subject of this paper.

\bigskip

Xing Fu, Dachun Yang (Corresponding author) and Wen Yuan

\medskip

School of Mathematical Sciences, Beijing Normal University,
Laboratory of Mathematics and Complex Systems, Ministry of
Education, Beijing 100875, People's Republic of China

\smallskip

{\it E-mails}: \texttt{xingfu@mail.bnu.edu.cn} (X. Fu)

\hspace{1.55cm}\texttt{dcyang@bnu.edu.cn} (D. Yang)

\hspace{1.55cm}\texttt{wenyuan@bnu.edu.cn} (W. Yuan)
\end{document}